\documentclass[largeextended,15pt]{svjour3}       
\smartqed  

\usepackage[sans]{dsfont}
\usepackage{enumitem}
\usepackage{cite}
\usepackage{multirow}
\usepackage{latexsym, epsfig, amsmath, amssymb, amsfonts}
\usepackage{mathrsfs, graphics, fancyhdr, setspace, color}
\usepackage[bookmarksnumbered, plainpages, backref, colorlinks]{hyperref}
\usepackage{graphicx, latexsym}
\usepackage{mathptmx}      
\usepackage{float}
\usepackage{subcaption}
\usepackage{algpseudocode}  

\usepackage{rotating}    
\usepackage{booktabs}    
\usepackage{caption}     

\newtheorem{algorithm}[theorem]{Algorithm}
\newtheorem{assumption}[theorem]{Assumption}

\journalname{}

\begin{document}
	
\title{Non-Lipschitz Inertial Contraction-Type Method for Monotone Variational Inclusion problems}
	
\titlerunning{Non-Lipschitz Inertial Contraction-Type Method for Monotone Variational Inclusion problems}
	
\author{Feeroz Babu \and Syed Shakaib Irfan \and  Jen-Chih Yao \and Xiaopeng Zhao}
	
	
\authorrunning{F. Babu et al.} 
	
\institute{Feeroz Babu \at Mathematics Division, School of Advanced Sciences and Languages, VIT Bhopal University, Kothrikalan, Sehore, Madhya Pradesh – 466114, India
\email{firoz77b@gmail.com}
\and
Syed Shakaib Irfan \at Department of Mathematics, Aligarh Muslim University, Aligarh 202 002, India
\email{ssirfan.mm@amu.ac.in}
\and
Jen-Chih Yao \at Research Center for Interneural Computing, China Medical University Hospital, China Medical University, Taichung, 40447, Taiwan and Academy of Romanian Scientists, Bucharest, 50044, Romania
\email{yaojc@mail.cmu.edu.tw}
\and
Xiaopeng Zhao \at School of Mathematical Sciences, Tiangong University, Tianjin, 300387, China
	\email{zhaoxiaopeng.2007@163.com}}
	
\date{Received: \today / Accepted: date}

\maketitle
	

\begin{abstract}
	This study explores an inertial-based contraction-type approach for addressing monotone variational inclusion problems (in short, MVIP) within real Hilbert spaces. Most contraction-type techniques assume Lipschitz continuity and monotonicity or co-coercivity (inverse strongly monotone) of the single-valued operator. However, the key advantage of the proposed method is that it does not rely on the coercivity condition and the Lipschitz continuity for the single-valued operator. A weak convergence result has been achieved for the proposed algorithm with a convergence rate $\mathcal{O}\left(1/\sqrt{k}\right)$. In addition, the maximal and strong monotonicity of the set-valued operator is used to establish a strong convergence result with the linear convergence rate. To demonstrate the effectiveness of our proposed method, we conduct numerical experiments focused on signal recovery problems. 
\end{abstract}
	
\keywords{Hilbert space; monotone operator; Variational inclusion problems; Inertial method; Resolvent operator; Weak convergence; Linear convergence rate}

\subclass{47J25; 90C30; 65K15}
			
	
\section{Introduction}\label{S:1}

	
	\noindent  In recent years, variational inclusion problems have emerged as fundamental tools in mathematical programming, optimization, control theory, and various applications in the field of image processing and machine learning. These problems provide a versatile framework for unifying and solving many optimization-related challenges.

Given a real Hilbert space $\mathcal{H}$ with inner product $ \langle \cdot, \cdot \rangle $ and induced norm $ \|\cdot\| $, let $\mathcal{A}: \mathcal{H} \rightrightarrows \mathcal{H}$ be a set-valued operator and $ \mathcal{B}: \mathcal{H} \to \mathcal{H}$ be a single-valued operator. The domain of $\mathcal{A}$ is denoted by $D(\mathcal{A}) = \{ u\in \mathcal{H} : \mathcal{A}u \neq \emptyset\}$. The monotone variational inclusion problem (in short, MVIP) is expressed as follows.
\begin{equation}\label{MVIP}
	\mbox{Find } u^* \in (\mathcal{A}+\mathcal{\mathcal{B}})^{-1}(0).
\end{equation}
When $\mathcal{B}\equiv 0 $, problem \eqref{MVIP} reduces to the inclusion problem introduced by Rockafellar \cite{R76}. MVIPs are essential for optimization, variational inequalities, equilibrium models, and optimal control. Moreover, their structure facilitates the development of iterative algorithms for finding solutions, particularly using resolvent operators. Specifically, $ u^* \in \mathcal{H} $ is a solution of MVIP \eqref{MVIP}  if and only if it is the fixed point of the resolvent operator
$$ 
J_{\lambda\mathcal{A}} (I - \lambda\mathcal{B}) := (I+\lambda \mathcal{A})^{-1} (I - \lambda\mathcal{B}), \quad \lambda > 0.
$$
This is the motivation behind various iterative methods to solve the problem of MVIP.   In the case where the operator $\mathcal{A}: \mathcal{H} \rightrightarrows \mathcal{H}$ is \textit{maximal monotone} and $\mathcal{B} : \mathcal{H} \to \mathcal{H}$ is $\beta$-cocoercive, one of the classical and widely used methods for solving monotone variational inclusion problems (MVIP) is the forward-backward algorithm (in short, FBA). This method, originally proposed by Lions and Mercier \cite{LM79}, generates a sequence $\{u_k\}$ by iteration.
\begin{equation}\label{FB}
	u_{k+1} = J_{\lambda_k\mathcal{A}} \left( I - \lambda_k\mathcal{B}\right)(u_k), \quad \forall k \in \mathbb{N},
\end{equation}
Under appropriate assumptions, the sequence $\{u_k\}$ converges weakly to the MVIP.

Many scholars have developed significant results for MVIP in recent years, assuming that the operators are strongly monotone or inversely strongly monotone; see, e.g. \cite{MT11,S20,TWY12,IOM20,SAY15,GSDS21} and the references therein. Huang~\cite{H98} studied the MVIP \eqref{MVIP} in a setting where $\mathcal{A}$ is maximal monotone and $\mathcal{B}$ is strongly monotone and Lipschitz continuous and proved the existence of the solution and the convergence of the iterative method. Later, Zeng et al.~\cite{ZGY05} introduced a new iterative algorithm to solve a class of variational inclusions and established strong convergence results under appropriate parameter conditions.

However, in many practical problems, the operator $\mathcal{B}$ may not satisfy the assumptions of strong monotonicity, inverse strong monotonicity, or Lipschitz continuity. In particular, the classical forward-backward splitting algorithm \eqref{FB} typically requires the operator $\mathcal{B}$ to be strongly inverse monotone, which is often too restrictive for real-world applications. Therefore, relaxing these conditions has become crucial in solving more general MVIPs. In this direction, various authors~\cite{AOM23,M18,OMUCN24} have proposed projection and contraction methods, initially in Euclidean spaces and later extended to Hilbert spaces, to address these challenges and broaden the applicability of iterative methods.

An important development in this direction is the Tseng splitting algorithm \cite{T00}. This two-step iterative scheme is given by
\begin{equation}\label{Tseng}
	\begin{cases}
		v_k &= J_{\lambda_k\mathcal{A}}(I - \lambda_k \mathcal{B})u_k, \\
		u_{k+1} &= v_k - \lambda_k (\mathcal{B} v_k - \mathcal{B} u_k),
	\end{cases}	
\end{equation}
where step sizes $\{\lambda_k\}$ can be updated automatically using Armijo-type line search strategies. Under the assumption that $\mathcal{A}$ is maximal monotone and $\mathcal{B}$ is Lipschitz continuous and monotone, the sequence $\{u_k\}$ generated by \eqref{Tseng} converges weakly to a solution of MVIP in real Hilbert spaces. Furthermore, Zhang and Wang~\cite{ZW18} proposed a hybrid iterative method combining projection and contraction techniques with the Tseng splitting idea, described as
\begin{equation}\label{ZW}
	\begin{cases}
		v_k &= J_{\lambda_k\mathcal{A}}(I - \lambda_k \mathcal{B})u_k, \\
		\phi(u_k,\,v_k) &= (u_k - v_k) - \lambda_k (\mathcal{B} u_k - \mathcal{B} v_k), \\
		\alpha_k &= \frac{\langle u_k - v_k, \phi(u_k,\,v_k) \rangle}{\|\phi(u_k,\,v_k)\|^2}, \\
		u_{k+1} &= u_k - \gamma \alpha_k \phi(u_k,\,v_k),
	\end{cases}	
\end{equation}
where $\gamma \in (0,2)$. Under suitable assumptions, they established the weak convergence of the generated sequence $\{u_k\}$ when $\mathcal{A}$ is maximal monotone and $\mathcal{B}$ is Lipschitz continuous and monotone.

The concept of inertial extrapolation in iterative algorithms dates back to the pioneering work of Polyak~\cite{P64}, who introduced the heavy ball method based on a second-order dynamical system to accelerate the convergence of smooth convex minimization problems. Building on this idea, many researchers have developed various fast iterative algorithms using inertial extrapolation techniques; see, e.g. \cite{S20,OMUCN24, TC21,WLC24,TV19,AOM23,YIS22} and references therein.

More recently, Lorenz and Pock~\cite{LP15} introduced an inertial forward-backward algorithm to solve MVIP. Their method generates the sequence $\{u_k\}$ according to
\begin{equation}\label{IFBA}
	\begin{cases}
		v_k &= u_k + \vartheta_k (u_k - u_{k-1}), \\
		u_{k+1} &= J_{\lambda_k\mathcal{A}}(I - \lambda_k \mathcal{B})v_k,
	\end{cases}	
\end{equation}
where $\vartheta_k\in (0, \vartheta)$, $\vartheta >0$ and $\lambda_k>0$ are appropriately chosen parameters. This inertial scheme takes advantage of information from two consecutive iterations to accelerate convergence, like the previous ones. As a result, inertial-based methods have become a major research direction in designing fast algorithms for MVIP and related optimization problems.

Recently, Tan and Cho \cite{TC21} proposed the following inertial viscosity-type projection algorithm for MVIP in Hilbert spaces
\begin{equation}\label{BC21}
	\begin{cases}
		\vartheta_k &= 
		\begin{cases}
			\min\left\{ \frac{\epsilon_k}{ \|u_k - u_{k-1}\|}, \vartheta \right\}, & \text{if } u_k \neq u_{k-1}, \\
			\vartheta, & \text{otherwise}.
		\end{cases}\\
		w_k &= u_k + \vartheta_k (u_k - u_{k-1}),\\
		v_k &= J_{\lambda_k\mathcal{A}}(I - \lambda_k \mathcal{B})u_k, \\
		\phi(w_k,\,v_k) &= (w_k - v_k) - \lambda_k (\mathcal{B} w_k - \mathcal{B} v_k), \\
		\eta_k &= \frac{(1 - \mu)\|w_k - v_k\|^2}{\|\phi(w_k,\,v_k)\|^2}, \\
		z_k &= w_k - \gamma \eta_k \phi(u_k,\,v_k),\\
		u_{k+1} &= \alpha_k f(u_k) + (1 - \alpha_k) z_k.
	\end{cases}	
\end{equation}
where $\delta > 0$, $\vartheta > 0$, $l \in (0,1)$, $\mu \in (0,1)$, $\gamma \in (0,2)$, $\{\alpha_k\} \subset (0,1)$ and $\{\epsilon_k\}\in (0, \infty)$. Under assumptions, when $\mathcal{A}: \mathcal{H} \rightrightarrows \mathcal{H}$ is maximal monotone, $\mathcal{B}: \mathcal{H} \to \mathcal{H}$ is Lipschitz continuous and monotone and $f : \mathcal{H} \to \mathcal{H} $ is $\rho$-contraction with constant $\rho \in [0,1)$, $\lim\limits_{k \to \infty} \frac{\epsilon_k}{\alpha_k} = 0$, $\lim\limits_{k \to \infty} \alpha_k = 0$ and $ \sum\limits_{k=1}^{\infty} \alpha_k = \infty$, Algorithm \eqref{BC21} converges strongly.

On the other hand, projection and contraction methods have gained popularity for solving variational inequality problems, especially in Euclidean spaces. Algorithms employing these techniques, including the extragradient method \cite{JX22} and its variants, have shown strong convergence under monotonicity or pseudo-monotonicity of operators. For example, Cai et al. \cite{CGH14} demonstrated the effectiveness of projection and contraction methods in monotone variational inequalities with complexity analysis based on step size conditions. Dong et al. \cite{DYY19} extended these methods to infinite-dimensional Hilbert spaces, providing modified algorithms to address practical challenges. These studies highlight the potential of projection methods to effectively address variational inclusions and variational inequalities. Recently, Jia and Xu \cite{JX22} presented the following projection-like method for variational inequalities in a closed and convex set $K \subseteq \mathbb{R}^n$.\\
\textbf{Initialization:} Given $\sigma > 0$, $\eta \in (0,1)$ and $\sigma \in (0,1)$.  Define the sequence $\{u_k\}$ as 
\begin{equation}\label{JX22}
	\begin{cases}
		v_k &= P_K[u_k- \lambda_k \mathcal{B}(u_k)],\\
		\phi(u_k,\,v_k) &= (u_k - v_k) - \lambda_k (\mathcal{B} u_k - \mathcal{B} v_k), \\
		\alpha_k &= \frac{\langle u_k - v_k, \, \phi(u_k, v_k) \rangle}{\|\phi(u_k, v_k)\|^2}, \\
		u_{k+1} &= u_k - \alpha_k \phi(u_k, v_k),
	\end{cases}	
\end{equation}
where $\lambda_k = \sigma \eta^m$ and $m$ is the smallest nonnegative integer satisfying the Armijo-type condition 
\[
\sigma \eta^m \|\mathcal{B}(u_k) - \mathcal{B}(P_K[u_k - \sigma \eta^m \mathcal{B}(u_k)])\| \leq \sigma \|u_k - P_K[u_k - \sigma \eta^m \mathcal{B}(u_k)]\|.
\]
and $\mathcal{B}$ is continuous in $\mathbb{R}^n$ and quasimonotone in $K$.  The author discussed the convergence of Algorithm \eqref{JX22} without the Lipschitz continuity assumption.

However, properties such as strong monotonicity or Lipschitz continuity are restrictive and are not easily satisfied in many real-life practical problems. Consequently, reducing these assumptions is crucial for a broader applicability; see, e.g., \cite{AOM23,ABL18,ABS22}. Motivated by these developments, more precisely Algorithm \eqref{BC21} and \eqref{JX22}, our main goal in this direction is to extend the results of the existing methods to address MVIP involving a non-Lipschitz and non-co-coercive single-valued operator $\mathcal{B}$. Specifically, we develop and analyze new inertial forward-backward contraction-type algorithms to solve MVIP in real Hilbert spaces to enhance the speed of convergence. The proposed algorithm is designed to achieve fast weak and strong convergence while operating under weaker assumptions than traditional approaches. Our methods embed inertial terms to accelerate convergence without Lipschitz continuity. By addressing these challenges, we provide a broader framework for MVIP, extending and generalizing existing results in the literature. The results presented in this work not only enhance and generalize existing findings but also reveal that our algorithms are efficient and superior to other methods currently available in the literature.

The structure of the paper is organized as follows. Section~\ref{S:2} introduces essential notations, definitions, and preliminary results that form the foundation for the subsequent analysis. Section~\ref{S:3} presents an inertial-based contraction-type method and its associated parameters. We establish key properties of the algorithm and prove its weak convergence with a rate of $\mathcal{O}(1/\sqrt{k})$. In Section~\ref{S:4}, we derive a strong convergence result under suitable conditions and demonstrate that the proposed method achieves a linear convergence rate. Section~\ref{S:5} is devoted to a computational study in which the performance of the proposed algorithm is compared with several existing methods through numerical experiments.


\section{Preliminaries}\label{S:2}


\noindent 	Let $\mathcal{H}$ be a real Hilbert space with inner product $\langle \cdot,\, \cdot\rangle$ and induced norm $\|\cdot\|$. The weak convergence and strong convergence of $\{u_k\}_{k=0}^\infty$ to $x\in \mathcal{H}$ are represented by $u_k \rightharpoonup x$ and $u_k \to x$, respectively. For each $u, v, w \in \mathcal{H}$, the following is true
\begin{enumerate}
	\item [(I)] $\|u + v\|^2 \leq \|u\|^2 + 2\langle v, u + v \rangle$;
	\item [(II)] $\|\alpha u + (1 - \alpha)v\|^2 = \alpha\|u\|^2 + (1 - \alpha)\|v\|^2 - \alpha(1 - \alpha)\|u - v\|^2$, \quad $ \forall \, \alpha \in \mathbb{R}$;
	\item [(III)] $\|u-\alpha v\|^2 \geq (1-\alpha)\|u\|^2 -\alpha(1-\alpha)\|v\|^2$,  \quad $ \forall \, \alpha \geq 0$.
\end{enumerate}

In this section, we collect some necessary concepts and lemmas to prove the main results.

\begin{definition}
	A single-valued operator $\mathcal{B} : \mathcal{H} \to \mathcal{H}$ is called
	\begin{itemize}
		\item [(a)]  nonexpansive, if for all $u, v \in \mathcal{H}$,
		\begin{equation*}
			\|\mathcal{B}u - \mathcal{B}v\| \leq \|u - v\|.
		\end{equation*}
		\item [(b)] Lipschitz continuous with $L > 0$ if for all $u, v \in \mathcal{H}$
		\begin{equation*}
			\|\mathcal{B}u - \mathcal{B}v\| \leq L\|u - v\|.
		\end{equation*}
		\item [(c)] monotone, if for all $u, v \in \mathcal{H}$,
		\begin{equation*}
			\langle \mathcal{B}u - \mathcal{B}v, u - v \rangle \geq 0.
		\end{equation*}
		\item[(d)] strongly monotone if there is a constant $\beta>0$ such that for every $u, v \in \mathcal{H}$,
		$$ 
		\left\langle \mathcal{B}u-\mathcal{B}v, u-v \right\rangle \geq \beta \|u-v\|^2.
		$$
		\item [(e)] co-coercive ($\alpha$-inverse strongly monotone), if there exists an
		$\alpha>0$ such that
		$$ 
		\left\langle \mathcal{B}u-\mathcal{B}v, u-v \right\rangle \geq \alpha\|\mathcal{B}u-\mathcal{B}v\|^2.
		$$
	\end{itemize}
\end{definition} 
\begin{definition}
	A set-valued operator $\mathcal{A} : \mathcal{H} \rightrightarrows \mathcal{H}$ is called
	\begin{itemize}
		\item [(a)] monotone, if for all $u, v \in \mathcal{H}$,
		\begin{equation*}
			\langle w - z, u - v \rangle \geq 0, \quad \forall w \in \mathcal{A}u \mbox{ and } \forall z \in \mathcal{A}v.
		\end{equation*}
		\item[(b)] strongly monotone if there is a constant $\beta>0$ such that for every $u, v \in \mathcal{H}$,
		$$ \left\langle w-z, u-v \right\rangle \geq \beta \|u-v\|^2, \quad \forall u\in \mathcal{A}u \mbox{ and } \forall z\in \mathcal{A}v.$$
		
		\item [(c)] maximal monotone if it is monotone, in addition, its graph 
		$$
		G(\mathcal{A}) := \{(u, v) \in \mathcal{H} \times \mathcal{H} : v \in \mathcal{A}u\}
		$$
		is not contained in any  other graph of monotone operator. It is worth mentioning that a monotone operator $\mathcal{A}$ is maximal if and only if for $(u, v) \in \mathcal{H} \times \mathcal{H}$, 
		$$
		\langle u - w, v - z \rangle \geq 0 \quad \text{for every } (w, z) \in G(\mathcal{A}) \Rightarrow v \in \mathcal{A}u.
		$$
	\end{itemize}
\end{definition}

	\begin{lemma}\label{lm1}{\rm \cite[Corollary 20.25]{BC11}}
		Let $\mathcal{B} : \mathcal{H} \to \mathcal{H}$ be a monotone and continuous single-valued operator. Then $\mathcal{B}$ is maximal monotone.
	\end{lemma}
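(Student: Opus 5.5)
To prove Lemma~\ref{lm1}, I would use the characterization of maximality recalled in the definition above. Fix $(u,v)\in\mathcal{H}\times\mathcal{H}$ with
\[
\langle u-w,\, v-\mathcal{B}w\rangle \ge 0 \quad \text{for every } w\in\mathcal{H}.
\]
The goal is to show $v=\mathcal{B}u$. Since $\mathcal{B}$ is single-valued with full domain, its graph is $\{(w,\mathcal{B}w): w\in\mathcal{H}\}$. Monotonicity of $\mathcal{B}$ is part of the hypothesis, so only the maximality condition needs to be checked. The argument is Minty's trick: probe the inequality along rays issuing from $u$ and pass to the limit using continuity.

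Concretely, fix an arbitrary direction $h\in\mathcal{H}$ and a parameter $t>0$, and set $w=u-th$. Then $u-w=th$, and the assumed inequality gives $t\langle h,\, v-\mathcal{B}(u-th)\rangle\ge 0$. Dividing by $t>0$ yields
\[
\langle h,\, v-\mathcal{B}(u-th)\rangle\ge 0 .
\]
Now let $t\downarrow 0$. Since $u-th\to u$ in norm and $\mathcal{B}$ is continuous, we have $\mathcal{B}(u-th)\to\mathcal{B}u$. Passing to the limit in the inner product gives $\langle h,\, v-\mathcal{B}u\rangle\ge 0$ for every $h\in\mathcal{H}$.

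Finally, choose $h=-(v-\mathcal{B}u)$. This gives $-\|v-\mathcal{B}u\|^2\ge 0$, hence $v=\mathcal{B}u$, i.e. $(u,v)\in G(\mathcal{B})$. So $G(\mathcal{B})$ admits no proper monotone extension, and $\mathcal{B}$ is maximal monotone.

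The only delicate point is the limit step. Continuity in the norm topology, or even just hemicontinuity along line segments, suffices, because the relevant points all lie on the segment through $u$ in the direction $h$. The full domain of $\mathcal{B}$ is essential, since it guarantees that the test points $u-th$ belong to the domain for every $t>0$.
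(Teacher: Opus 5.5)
Your proof is correct: testing the maximality criterion at $w=u-th$, dividing by $t>0$, letting $t\downarrow 0$ using continuity, and then choosing $h=\mathcal{B}u-v$ forces $v=\mathcal{B}u$, as required. The paper gives no proof of its own and only cites Bauschke--Combettes (Corollary 20.25); your ray-probing (Minty) argument is the standard proof of that result, and, as you note, hemicontinuity of $\mathcal{B}$ would already suffice.
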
	

		\begin{proposition}  
			Let $\mathcal{A} : \mathcal{H} \rightrightarrows \mathcal{H}$ be a maximal monotone set-valued operator and $\mathcal{B} : \mathcal{H} \to \mathcal{H}$ be a monotone and continuous single-valued operator with $D(\mathcal{A}) = \mathcal{H}$. Then, $\mathcal{A} + \mathcal{B}$ is maximal monotone.
		\end{proposition}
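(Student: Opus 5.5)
Since $\mathcal{B}$ is monotone and continuous, Lemma~\ref{lm1} shows that $\mathcal{B}$ is maximal monotone, and its domain is $D(\mathcal{B}) = \mathcal{H}$. The plan is to apply the classical Rockafellar sum theorem: if $\mathcal{A}$ and $\mathcal{B}$ are maximal monotone and $\operatorname{int} D(\mathcal{A}) \cap D(\mathcal{B}) \neq \emptyset$ (or the Attouch--Brezis type condition $0 \in \operatorname{int}\bigl(D(\mathcal{A}) - D(\mathcal{B})\bigr)$), then $\mathcal{A}+\mathcal{B}$ is maximal monotone. Here $D(\mathcal{A}) = D(\mathcal{B}) = \mathcal{H}$, so $\operatorname{int} D(\mathcal{A}) \cap D(\mathcal{B}) = \mathcal{H} \neq \emptyset$ and the qualification holds trivially. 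The statement then follows, and the proof reduces to citing \cite{BC11} (e.g. Corollary 25.5 there, which needs only $D(\mathcal{B}) = \mathcal{H}$ and continuity).

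For a more self-contained argument, avoiding the general sum theorem, I would use Minty's characterization directly. Monotonicity of $\mathcal{A}+\mathcal{B}$ is immediate: add the monotonicity inequalities of $\mathcal{A}$ and $\mathcal{B}$. For maximality it suffices to show $\operatorname{ran}(I + \mathcal{A} + \mathcal{B}) = \mathcal{H}$. Fix $y \in \mathcal{H}$. We need $u$ with $y - u - \mathcal{B}u \in \mathcal{A}u$. Equivalently, $u$ must be a zero of $\mathcal{A} + (I + \mathcal{B} - y)$, where $I+\mathcal{B}-y$ is continuous and strongly monotone. I would apply a Browder-type surjectivity result for the sum of a maximal monotone operator and a continuous, everywhere-defined monotone coercive operator. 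To keep it elementary, one route is Yosida regularization. Replace $\mathcal{A}$ by its Yosida approximation $\mathcal{A}_\lambda$, which is Lipschitz and monotone. Solve $u_\lambda + \mathcal{A}_\lambda u_\lambda + \mathcal{B}u_\lambda = y$ using the fact that a continuous, strongly monotone single-valued operator on $\mathcal{H}$ is surjective (itself obtainable from Lemma~\ref{lm1} plus Minty). Then derive bounds on $\{u_\lambda\}$ and $\{\mathcal{A}_\lambda u_\lambda\}$, and pass to the limit $\lambda \downarrow 0$.

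The main obstacle is the limit passage. Boundedness of $\mathcal{A}_\lambda u_\lambda$ requires using $D(\mathcal{A}) = \mathcal{H}$: $\mathcal{A}$ is then locally bounded everywhere, which controls $\mathcal{A}_\lambda u_\lambda$ via monotonicity tested against a fixed point. In addition, $\mathcal{B}$ is only continuous, not Lipschitz, so only weak limits are available for $\mathcal{B}u_\lambda$. The strong monotonicity of $I$ should nevertheless give strong (Cauchy) convergence of $u_\lambda$, after which continuity of $\mathcal{B}$ and demiclosedness of the graph of $\mathcal{A}$ finish the argument. Given this technical cost, the primary plan remains the one-line appeal to the sum theorem with the trivially satisfied domain condition.
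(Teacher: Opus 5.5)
Your primary argument is the same as the paper's: Lemma~\ref{lm1} makes $\mathcal{B}$ maximal monotone with $D(\mathcal{B})=\mathcal{H}$, and a sum theorem for maximal monotone operators gives maximality of $\mathcal{A}+\mathcal{B}$. The paper invokes Remark~\ref{rm1}(c) for this sum theorem, and you helpfully make explicit that its domain qualification holds trivially because $D(\mathcal{B})=\mathcal{H}$; your Yosida-regularization sketch is an optional self-contained alternative and is not needed.
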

		\begin{proof}  
			Since $\mathcal{B}$ is a monotone and continuous single-valued operator, by Lemma \ref{lm1}, $\mathcal{B}$ is the maximal monotone. Using the maximal monotonicity of $\mathcal{A}$ and by Remark \ref{rm1}(c), $\mathcal{A} + \mathcal{B}$ is maximal monotone.
		\end{proof}
		Let $\mathcal{A} : \mathcal{H} \rightrightarrows \mathcal{H}$ be a set-valued operator. Define the resolvent operator $J_{\lambda\mathcal{A}}: \mathcal{H} \to \mathcal{H}$ associated with $\lambda > 0$ as
		$$
		J_{\lambda\mathcal{A}}(x) := (I + \lambda\mathcal{A})^{-1}(x), \quad \forall x \in \mathcal{H}.
		$$
		
		\begin{remark}\label{rm1}
			{\rm(a)} If $\mathcal{A} : \mathcal{H} \rightrightarrows \mathcal{H}$ is maximal monotone and $\lambda > 0$ then $D(J_{\lambda\mathcal{A}}) = \mathcal{H}$, the resolvent operator $J_{\lambda\mathcal{A}}$ is single-valued, nonexpansive, and firmly nonexpansive.
			\begin{itemize}
				\item[\rm (b)] Define the set of fixed points of operator $T$ as $\operatorname{Fix}(T) = \{x \in \mathcal{H} : x = Tx\}$. The fixed point of the resolvent is given by
				$$
				\mbox{Fix} (J_{\lambda\mathcal{A}})  = A^{-1}(0)=\{x \in D(\mathcal{A}) : 0 \in \mathcal{A}(x)\}.
				$$
				\item [\rm (c)] If $\mathcal{A} : \mathcal{H} \rightrightarrows \mathcal{H}$ and $\mathcal{B} : \mathcal{H} \to \mathcal{H}$ are maximal monotone then so is $\mathcal{A}+\mathcal{B}$.
			\end{itemize}
		\end{remark}
		\begin{lemma}\label{lm11}
			Let $\mathcal{H}$ be a real Hilbert space, $\mathcal{A} : \mathcal{H} \rightrightarrows \mathcal{H}$ be a maximal monotone operator and $\mathcal{B} : \mathcal{H} \to \mathcal{H}$ be a single-valued operator. and let
			\[
			T_\lambda = (I + \lambda\mathcal{A})^{-1}(I - \lambda\mathcal{B}), \quad \lambda > 0.
			\]
			Then,
			\[
			\operatorname{Fix}(T_\lambda) = (\mathcal{A} + \mathcal{B})^{-1}(0).
			\]
		\end{lemma}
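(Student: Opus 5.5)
The plan is to prove the two inclusions by unwinding the definition of the resolvent and showing that each side is equivalent to the same inclusion. By Remark~\ref{rm1}(a), for $\lambda>0$ the resolvent $J_{\lambda\mathcal{A}}=(I+\lambda\mathcal{A})^{-1}$ is defined on all of $\mathcal{H}$ and is single-valued, because $\mathcal{A}$ is maximal monotone. Since $\mathcal{B}$ is single-valued and everywhere defined, $I-\lambda\mathcal{B}$ maps $\mathcal{H}$ into $\mathcal{H}$. Hence $T_\lambda$ is a well-defined single-valued map on $\mathcal{H}$, and its fixed-point set makes sense. No monotonicity or continuity of $\mathcal{B}$ will be needed.

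The central observation is the following. For $x,y\in\mathcal{H}$, single-valuedness of $J_{\lambda\mathcal{A}}$ gives
\[
y=J_{\lambda\mathcal{A}}(z)\iff z\in y+\lambda\mathcal{A}y .
\]
First let $u\in\operatorname{Fix}(T_\lambda)$. Then $u=J_{\lambda\mathcal{A}}(u-\lambda\mathcal{B}u)$, so $u-\lambda\mathcal{B}u\in u+\lambda\mathcal{A}u$. Cancelling $u$ gives $-\lambda\mathcal{B}u\in\lambda\mathcal{A}u$. Dividing by $\lambda>0$ gives $-\mathcal{B}u\in\mathcal{A}u$, that is, $0\in\mathcal{A}u+\mathcal{B}u$, so $u\in(\mathcal{A}+\mathcal{B})^{-1}(0)$.

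Conversely, let $0\in(\mathcal{A}+\mathcal{B})u$. Then $u\in D(\mathcal{A})$ and $-\mathcal{B}u\in\mathcal{A}u$. Multiplying by $\lambda$ and adding $u$ gives $u-\lambda\mathcal{B}u\in(I+\lambda\mathcal{A})u$. By the displayed equivalence (with $y=u$), this means $u=J_{\lambda\mathcal{A}}(u-\lambda\mathcal{B}u)=T_\lambda u$.

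There is no real obstacle; the only point needing care is the direction of the equivalence. The implication "$y=J_{\lambda\mathcal{A}}z\Rightarrow z\in(I+\lambda\mathcal{A})y$" is just the definition of the inverse. The reverse implication requires that $(I+\lambda\mathcal{A})^{-1}z$ be a singleton, which is exactly where maximal monotonicity (via Remark~\ref{rm1}(a)) is used. Since singleton-ness already follows from monotonicity alone, I would mention this step explicitly: if $z\in y_1+\lambda\mathcal{A}y_1$ and $z\in y_2+\lambda\mathcal{A}y_2$, then monotonicity yields $\|y_1-y_2\|^2\le 0$.
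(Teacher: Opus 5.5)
Your proof is correct and follows the same route as the paper, which unwinds $x=(I+\lambda\mathcal{A})^{-1}(I-\lambda\mathcal{B})x \Leftrightarrow x-\lambda\mathcal{B}x\in(I+\lambda\mathcal{A})x$ and then cancels $x$. Your version is a bit more careful than the paper's. You state explicitly that the converse direction needs $(I+\lambda\mathcal{A})^{-1}$ to be single-valued, which follows from monotonicity of $\mathcal{A}$, whereas the paper's chain of equivalences leaves this implicit.
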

		
		\begin{proof}
			By the definition of $T_\lambda$, we observe
			\[
			x = T_\lambda x \quad \Leftrightarrow \quad x = (I + \lambda\mathcal{A})^{-1}(I - \lambda\mathcal{B})x
			\quad \Leftrightarrow \quad x - \lambda\mathcal{B} x \in (I + \lambda\mathcal{A})(x).
			\]
			This implies that
			\[
			x \in (\mathcal{A} + \mathcal{B})^{-1}(0).
			\]
			Therefore,
			\[
			\operatorname{Fix}(T_\lambda) = (\mathcal{A} + \mathcal{B})^{-1}(0).
			\]
		\end{proof}
		
		\begin{definition}\label{df1}\cite{OR70}
			A sequence $\{u_k\}$ in a Hilbert space $\mathcal{H}$ is said to converge linearly to $u^* \in \mathcal{H}$ with rate $\vartheta \in [0,1)$ if there exists a constant $c > 0$ such that
			\[
			\|u_k - u^*\| \leq c \vartheta^k, \quad \forall k \in \mathbb{N}.
			\]
		\end{definition}
		
		\begin{lemma}\label{lm5}{\rm\cite[Lemma 2.39]{BC11} } Let $\Omega$ be a nonempty subset of $\mathcal{H}$ and $\{x_k\}$ be a sequence in $\mathcal{H}$ such that the following properties hold:
			\begin{itemize}
				\item[{\rm(a)}] $\forall x \in \Omega, \, \lim\limits_{k \to \infty} \|x_k - x\|$ exists;
				\item[{\rm(b)}] If a subsequence of $\{u_k\}$ converges weakly to $x$ and $x \in \Omega$.
			\end{itemize}
			Then the sequence $\{u_k\}$ converges weakly to a point in $\Omega$.
		\end{lemma}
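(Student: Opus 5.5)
This is the standard Opial-type lemma, which I will prove directly from the two hypotheses. I read hypothesis (b) as: every weak sequential cluster point of $\{x_k\}$ lies in $\Omega$. The notation $\{u_k\}$ in the statement is identified with $\{x_k\}$.

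The plan has two steps. First, show that $\{x_k\}$ has at least one weak cluster point. Second, show that it has only one. Boundedness together with a unique weak cluster point then yields weak convergence of the whole sequence.

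\textbf{Existence of a cluster point.} Since $\Omega\neq\emptyset$, pick some $x\in\Omega$. By (a), $\lim_{k}\|x_k-x\|$ exists, so $\{\|x_k-x\|\}$ is bounded, and hence $\{x_k\}$ is bounded. Bounded sets in a Hilbert space are weakly sequentially relatively compact, so every subsequence of $\{x_k\}$ has a further subsequence converging weakly to some point. By (b), any such weak limit belongs to $\Omega$.

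\textbf{Uniqueness of the cluster point.} This is the main step. Suppose $x_{k_i}\rightharpoonup y$ and $x_{m_j}\rightharpoonup z$. By (b), both $y$ and $z$ lie in $\Omega$, so by (a) the limits $\ell_y=\lim\|x_k-y\|$ and $\ell_z=\lim\|x_k-z\|$ exist. Expanding the squared norms gives the identity
\[
2\langle x_k, z-y\rangle=\|x_k-y\|^2-\|x_k-z\|^2+\|z\|^2-\|y\|^2 .
\]
The right-hand side converges as $k\to\infty$, so $\langle x_k, z-y\rangle$ converges to a limit $L$. Passing to the limit along $\{k_i\}$ gives $L=\langle y,z-y\rangle$, and along $\{m_j\}$ gives $L=\langle z,z-y\rangle$. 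Subtracting the two expressions yields $\|z-y\|^2=0$, so $y=z$.

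\textbf{Conclusion.} Let $\bar x$ denote the unique weak cluster point. If $x_k\not\rightharpoonup\bar x$, there would be some $w\in\mathcal H$, some $\varepsilon>0$, and a subsequence with $|\langle x_{k_i}-\bar x,w\rangle|\ge\varepsilon$ for all $i$. By boundedness, this subsequence has a further weakly convergent subsequence, whose limit must be $\bar x$ by uniqueness. That contradicts the bound $|\langle x_{k_i}-\bar x,w\rangle|\ge\varepsilon$. Hence $x_k\rightharpoonup\bar x\in\Omega$.

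I expect the uniqueness argument to be the only nontrivial point. The key observation is that existence of the norm limits in (a) forces $\langle x_k, z-y\rangle$ to converge along the entire sequence, not just along subsequences.
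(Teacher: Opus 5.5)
Your proof is correct and is essentially the standard argument for Opial's lemma in Bauschke--Combettes (Lemma 2.39), which the paper cites without proof: boundedness from (a), uniqueness of weak sequential cluster points via the identity $2\langle x_k, z-y\rangle=\|x_k-y\|^2-\|x_k-z\|^2+\|z\|^2-\|y\|^2$, then weak convergence of the whole sequence. Your reading of the garbled hypothesis (b) as ``every weak sequential cluster point of $\{x_k\}$ lies in $\Omega$'' is the intended one, and proving the final ``bounded with a unique weak cluster point implies weakly convergent'' step directly, rather than citing it, is fine.
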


		\begin{lemma}\label{lm8}{\rm \cite{AA01}}
			Let  $\{\alpha_k\}$, $\{\beta_k\}$ and $\{\omega_k\}$ be sequences in $[0,+\infty)$ such that
			\[
			\omega_{k+1} \leq \omega_k + \alpha_k(\omega_k - \omega_{k-1}) + \beta_k, \quad \forall k \in \mathbb{N},
			\]
			\[
			\sum_{k=1}^{\infty} \beta_k < +\infty,
			\]
			and there exists a real number $\alpha$ with $0 \leq \alpha_k \leq \alpha < 1$ for all $k \in \mathbb{N}$. Then the following hold
			\begin{itemize}
				\item[{\rm(a)}] $\sum\limits_{k=1}^{\infty} |\omega_k - \omega_{k-1}| < +\infty$, where $|t| := \max\{t, 0\}$;
				\item[{\rm(b)}] There exists $\omega^* \in [0,+\infty)$ such that $\lim\limits_{k \to \infty} \omega_k = \omega^*$.
			\end{itemize}
		\end{lemma}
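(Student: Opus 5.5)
The plan is to reduce the inequality to a one-sided recursion for the positive parts of the increments, prove that those positive parts are summable, and deduce convergence from a monotone auxiliary sequence. Throughout, write $\delta_k := \omega_k - \omega_{k-1}$ and $[t]_+ := \max\{t,0\}$; this is the quantity the statement denotes $|t|$.

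\textbf{Step 1 (one-sided recursion).} The hypothesis reads $\delta_{k+1} \le \alpha_k \delta_k + \beta_k$. Taking positive parts and using $\alpha_k \ge 0$ and $\beta_k \ge 0$ gives
\[
[\delta_{k+1}]_+ \le \alpha_k [\delta_k]_+ + \beta_k \le \alpha [\delta_k]_+ + \beta_k .
\]

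\textbf{Step 2 (part (a)).} Iterating the inequality of Step 1 yields
\[
[\delta_{k+1}]_+ \le \alpha^{k}[\delta_1]_+ + \sum_{j=1}^{k} \alpha^{k-j}\beta_j .
\]
We sum over $k$ and interchange the order of summation; this is legitimate because every term is nonnegative. The first part contributes at most $[\delta_1]_+/(1-\alpha)$. The second part contributes at most $\sum_j \beta_j \sum_{m\ge 0}\alpha^m = \frac{1}{1-\alpha}\sum_j \beta_j < \infty$. Hence $\sum_k [\delta_k]_+ < \infty$, which is (a). An alternative route avoids the double sum: summing the inequality $[\delta_{k+1}]_+ \le \alpha[\delta_k]_+ + \beta_k$ from $1$ to $N$ gives $(1-\alpha)\sum_{k=2}^{N+1}[\delta_k]_+ \le \alpha[\delta_1]_+ + \sum_k \beta_k$. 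Either version works.

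\textbf{Step 3 (part (b)).} Define the auxiliary sequence $t_k := \omega_k - \sum_{j=1}^{k}[\delta_j]_+$. Then
\[
t_{k+1} - t_k = \delta_{k+1} - [\delta_{k+1}]_+ \le 0 ,
\]
so $\{t_k\}$ is nonincreasing. It is also bounded below, since $\omega_k \ge 0$ and the series in (a) converges, so $t_k \ge -\sum_{j\ge 1}[\delta_j]_+ > -\infty$. Therefore $t_k$ converges to a finite limit. Since $\sum_{j=1}^{k}[\delta_j]_+$ converges as well, $\omega_k = t_k + \sum_{j=1}^{k}[\delta_j]_+$ converges to some $\omega^*$. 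Finally, $\omega^* \ge 0$ because every $\omega_k \ge 0$.

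\textbf{Main obstacle.} The only delicate point is Step 1: passing to positive parts requires $\alpha_k \ge 0$ and $\beta_k \ge 0$. Without these, $[\alpha_k \delta_k + \beta_k]_+ \le \alpha_k[\delta_k]_+ + \beta_k$ can fail. The remaining steps are routine manipulations of geometric series.
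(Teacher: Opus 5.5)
Your proof is correct: the positive-part recursion $[\delta_{k+1}]_+\le\alpha[\delta_k]_+ +\beta_k$, its geometric summation, and the nonincreasing, bounded-below auxiliary sequence $\omega_k-\sum_{j=1}^{k}[\delta_j]_+$ are all handled properly. The paper gives no proof of its own and simply cites Alvarez--Attouch, whose argument is essentially the one you wrote.
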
 
		
		\begin{lemma}\label{lm10}{\rm\cite[Lemma 2.4]{L90}}
			Let $\{\alpha_k\}$ and $\{\beta_k\}$ be sequences of nonnegative real numbers. Suppose there exists a constant $0 \leq \theta < 1$ such that
			\[
			\alpha_{k+1} \leq \theta \alpha_k + \beta_k, \quad \forall k \in \mathbb{N}.
			\]
			If $\lim\limits_{k \to \infty} \beta_k = 0$, then 
			$$
			\lim_{k \to \infty} \alpha_k = 0.
			$$
		\end{lemma}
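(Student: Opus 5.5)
Since $\theta<1$, iterating the recursion gives an explicit bound, and the rest is a standard tail-splitting argument. Unrolling $\alpha_{j+1}\le \theta\alpha_j+\beta_j$ from a starting index $N$ up to $k$ yields
\[
\alpha_{k+1} \le \theta^{\,k-N+1}\alpha_N + \sum_{j=N}^{k}\theta^{\,k-j}\beta_j, \qquad k\ge N .
\]
This is a routine induction on $k$, using only nonnegativity of the sequences and $\theta\ge 0$ to preserve the inequalities.

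Fix $\varepsilon>0$. Since $\beta_k\to 0$, I will choose $N$ such that $\beta_j\le \varepsilon(1-\theta)$ for all $j\ge N$. The convolution sum is then at most $\varepsilon(1-\theta)\sum_{i\ge 0}\theta^i=\varepsilon$. The first term $\theta^{\,k-N+1}\alpha_N$ tends to $0$ as $k\to\infty$, because $\alpha_N$ is a fixed finite number and $0\le\theta<1$.

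Taking the limit superior gives $\limsup_{k}\alpha_k\le\varepsilon$ for every $\varepsilon>0$. Combined with $\alpha_k\ge 0$, this gives $\lim_{k\to\infty}\alpha_k=0$.

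There is no real obstacle here. The only point that needs care is to restart the recursion at the index $N$ where the $\beta_j$ are already small, rather than at $k=1$. This avoids needing any bound on the early $\beta_j$ beyond their finiteness.
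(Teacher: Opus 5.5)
Your proof is correct: restarting the recursion at an index $N$ beyond which $\beta_j\le\varepsilon(1-\theta)$ bounds the geometric convolution sum by $\varepsilon$, and the term $\theta^{k-N+1}\alpha_N$ vanishes, so $\limsup_k\alpha_k\le\varepsilon$ for every $\varepsilon>0$, which together with $\alpha_k\ge 0$ gives the claim. The paper gives no proof of its own and simply quotes the result from \cite[Lemma 2.4]{L90}; your unrolling-plus-tail-splitting argument is the standard proof of that result (minor aside: the unrolling needs only $\theta\ge 0$, not nonnegativity of the sequences).
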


	
	\section{Weak Convergence}\label{S:3}
	
	\noindent This section introduces an inertial forward-backwards algorithm to address inclusion problems within real Hilbert spaces. A key benefit of the proposed method is that it operates without assuming coercivity or Lipschitz continuity of the single-valued operator.
	\begin{assumption}\label{Assump:1}
		{\rm(A1)} The solution set of the MVIP is nonempty, i.e., 
		$$
		\Omega_{\mathcal{A} + \mathcal{B}} := (\mathcal{A} + \mathcal{B})^{-1}(0) \neq \emptyset.
		$$
		\begin{itemize}
			\item[(A2)] The set-valued operator $\mathcal{A} : \mathcal{H} \rightrightarrows \mathcal{H}$ is maximal monotone with $D(\mathcal{A})=\mathcal{H}$.
			\item[(A3)] The single-valued operator $\mathcal{B} : \mathcal{H} \to \mathcal{H}$ is monotone and continuous.
			\item[(A4)] The non-decreasing sequence $\{\vartheta_k\}\subseteq (0,\,\vartheta)$  satisfies
			\begin{equation}\label{inertialCondition}
				0\leq \vartheta_k \leq \vartheta_{k+1} \leq \vartheta < \frac{\mathcal{E}}{\mathcal{E}+\max\{1, \mathcal{E}\}} ,
			\end{equation}
			where $\mathcal{E}=\frac{2-\gamma}{\gamma} \frac{(1 - \sigma)^4}{(1 + \sigma)^4}$,  $0 < \gamma <2$ and $0 < \sigma < 1$.
		\end{itemize}
	\end{assumption}

	\begin{algorithm}\label{A:1}
	Inertial forward–backward contraction type  method\\
		\noindent \noindent {\bf Initializing:} Choose $ s > 0 $, $ \mu \in (0, 1) $, $ \sigma \in (0, 1) $, $\vartheta_k \in (0, 1)$ and $\gamma \in (0, 2)$. Let $ u_0, u_1 \in \mathcal{H} $. Set $ k = 0 $.
		
		\vspace{5pt}
		
		\noindent {\bf Step 1:} Define
		\[
		w_k = u_k + \vartheta_k (u_k - u_{k-1}).
		\]
		
		\noindent {\bf Step 2:} Select $\lambda_k = s \mu^{j_k}$, where $j_k$ is the smallest nonnegative integer satisfying
		\begin{equation}
			\label{3.1}
			\lambda_k \|\mathcal{B}w_k - \mathcal{B}(J_{\lambda_k\mathcal{A}}[w_k - \lambda_k \mathcal{B}w_k])\| \leq \sigma \|w_k - J_{\lambda_k\mathcal{A}}[w_k - \lambda_k \mathcal{B}w_k]\|.
		\end{equation}
		Compute
		\[
		v_k = J_{\lambda_k\mathcal{A}}[w_k - \lambda_k \mathcal{B}w_k].
		\]
		
		\noindent {\bf Step 3:} Set
		\[
		\phi(w_k, v_k) = (w_k - v_k) - \lambda_k (\mathcal{B}w_k - \mathcal{B}v_k).
		\]
		
		\noindent {\bf Step 4:} If $\phi(w_k, v_k) = 0$, stop. Otherwise, compute
		\[
		u_{k+1} = w_k - \gamma\delta_k \phi(w_k, v_k),
		\]
		where
		\begin{equation}
			\label{3.2}
			\delta_k =
			\begin{cases}
				\frac{\langle w_k - v_k, \phi(w_k, v_k) \rangle}{\|\phi(w_k, v_k)\|^2}, & \text{if } \phi(w_k, v_k) \neq 0; \\
				0, & \text{otherwise}.
			\end{cases}
		\end{equation}
		Update $k := k + 1$, and go to {\bf Step 1}.
\end{algorithm}

	\begin{remark}\label{rm2}
		The subsequent findings summarize our observations on Algorithm \ref{A:1}
		\begin{itemize}
			\item[(a)] If $\vartheta_k = 0$ and $\mathcal{A}= N_K$ a normal cone on a convex set $K \subseteq \mathcal{H}$,  i.e.,
			$$
			N_K(x) = \{  u \in \mathcal{H} : \langle u,\, x-y\rangle \leq 0, \forall y \in K \},
			$$
			then Algorithm \ref{A:1} reduces to  \cite[Algorithm YH]{JX22}.
			\item[(b)] If $\vartheta_k = 0$, in Algorithm \ref{A:1}, then we can get a solution under the Assumption \ref{Assump:1}. This approach is not allowed in the algorithms presented in \cite{ZW18,AOM23,JX22,TRCL24}.
			\item[(c)] Condition \eqref{inertialCondition} imposed on the sequence $\{\vartheta_k\}$ enhances the numerical efficiency of the Algorithm \ref{A:1} and offers a notable improvement over condition \eqref{BC21} of \cite{TC21}; see Section \ref{S:5}. Moreover, condition~\eqref{inertialCondition} is computationally more practical and easier to implement, whereas condition \eqref{BC21} of \cite{TC21} involves a higher computational cost.

			\item[(d)] In \cite[Algorithm 1]{YAS24} $\vartheta_k = \vartheta$ such that $2\mu \leq \frac{1}{1+\vartheta}$, $\mu \in (0,\,1)$
			which implies our approach, with a non-decreasing and bounded sequence ${\vartheta_k}$ defined by \eqref{inertialCondition}, offers more adaptive control, broader applicability and better numerical implementability compared to \cite{YAS24}. It enhances the algorithm's theoretical flexibility and practical stability; see Section \ref{S:5}.
			
			\item[(e)] \cite[Algorithm 3.1]{WLC24} requires double inertial steps $\{\alpha_k\}$, $\{\beta_k\}$ and $\{\theta_k\}$ that satisfy the strong conditions: $0 \leq \alpha_k \leq 1$; $0 \leq \beta_k \leq \beta_{k+1} \leq \beta < \dfrac{3 + 2\epsilon - \sqrt{8\epsilon + 17}}{2\epsilon}$; $0 < \theta < \theta_k \leq \theta_{k+1} \leq \dfrac{1}{1 + \epsilon}, \epsilon \in (1, +\infty)$ and $a_k = (1 - \theta_k)\beta_k + \theta_k\alpha_k$ is a non-decreasing sequence.
			In contrast, our approach \eqref{inertialCondition} involves only one sequence ${\vartheta_k}$, reducing the number of parameters and complexity of implementation. While \cite{WLC24} uses three coupled sequences ($\alpha_k$, $\beta_k$, $\theta_k$) with strict interdependent conditions, which increases the computational burden and the risk of misconfiguration.
		\end{itemize}
	\end{remark}
	\begin{proposition}
		If $\phi(w_k, v_k) = 0$ in Algorithm \ref{A:1}, then $v_k \in \Omega_{\mathcal{A} + \mathcal{B}}$.
	\end{proposition}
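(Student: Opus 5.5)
The plan is to show first that $\phi(w_k,v_k)=0$ forces $w_k=v_k$, and then to read off the inclusion directly from the definition of $v_k$ as a resolvent value. No convergence machinery is needed; the only ingredient is the Armijo-type condition \eqref{3.1}, which holds at the accepted step size $\lambda_k$.

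\textbf{Step 1: $w_k=v_k$.} By the reverse triangle inequality and \eqref{3.1},
\[
\|\phi(w_k,v_k)\| \ge \|w_k-v_k\| - \lambda_k\|\mathcal{B}w_k-\mathcal{B}v_k\| \ge (1-\sigma)\|w_k-v_k\|.
\]
Since $\sigma\in(0,1)$, the hypothesis $\phi(w_k,v_k)=0$ gives $\|w_k-v_k\|=0$, so $w_k=v_k$.

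An alternative route to the same conclusion goes through the inner product. Using Cauchy--Schwarz and \eqref{3.1},
\[
\langle w_k-v_k,\phi(w_k,v_k)\rangle \ge (1-\sigma)\|w_k-v_k\|^2 .
\]
The left-hand side vanishes when $\phi(w_k,v_k)=0$, which again forces $w_k=v_k$.

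\textbf{Step 2: the inclusion.} With $w_k=v_k$, the definition of $v_k$ reads
\[
v_k = J_{\lambda_k\mathcal{A}}(I-\lambda_k\mathcal{B})v_k ,
\]
so $v_k$ is a fixed point of $T_{\lambda_k}$. By Lemma~\ref{lm11}, with $\mathcal{A}$ maximal monotone under (A2), $\operatorname{Fix}(T_{\lambda_k})=(\mathcal{A}+\mathcal{B})^{-1}(0)=\Omega_{\mathcal{A}+\mathcal{B}}$. Hence $v_k\in\Omega_{\mathcal{A}+\mathcal{B}}$.

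Equivalently, one can unwind the resolvent by hand: $w_k-\lambda_k\mathcal{B}w_k\in v_k+\lambda_k\mathcal{A}v_k$, and substituting $w_k=v_k$ gives $0\in\lambda_k(\mathcal{A}+\mathcal{B})v_k$. Dividing by $\lambda_k>0$ yields the claim.

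\textbf{Where care is needed.} The only real subtlety is that the Armijo step $\lambda_k$ must actually exist, so that \eqref{3.1} is available at the accepted step. If $w_k=v_k$ for some trial step, then \eqref{3.1} holds trivially at that step. Otherwise one needs the standard argument that a finite $j_k$ exists, which relies on continuity of $\mathcal{B}$ and of the resolvent in $\lambda$ as $\lambda\to0$. For the present proposition I will take the well-definedness of $\lambda_k$ as given, since the algorithm presupposes it, and the argument above uses only \eqref{3.1} at the accepted $\lambda_k$.
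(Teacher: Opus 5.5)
Your proof is correct. It has the same skeleton as the paper's: show $w_k=v_k$, then apply Lemma~\ref{lm11}. The difference is in the inequality used to force $w_k=v_k$, and yours is the one that actually works.

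The paper writes $\langle w_k-v_k,\phi(w_k,v_k)\rangle=\|w_k-v_k\|^2-\lambda_k\langle w_k-v_k,\mathcal{B}w_k-\mathcal{B}v_k\rangle\ge\|w_k-v_k\|^2$, implicitly appealing to monotonicity of $\mathcal{B}$. Monotonicity gives the reverse inequality $\le$. Moreover, no argument from monotonicity alone can succeed: for $\mathcal{B}=cI$ with $c>0$ and step $\lambda=1/c$, one has $\phi(w,v)=0$ for every pair $w,v$.

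What rules this out is the Armijo condition \eqref{3.1} with $\sigma<1$, which is exactly what you use. Via the reverse triangle inequality you get $\|\phi(w_k,v_k)\|\ge(1-\sigma)\|w_k-v_k\|$, and via Cauchy--Schwarz you get $\langle w_k-v_k,\phi(w_k,v_k)\rangle\ge(1-\sigma)\|w_k-v_k\|^2$. The latter is the same estimate as \eqref{3.6}, which the paper itself derives later in Lemma~\ref{lm2}.

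Your Step~2, whether through Lemma~\ref{lm11} or by unwinding the resolvent directly, matches the paper. Your caveat that the argument presupposes the existence of the accepted step $\lambda_k$, guaranteed by Proposition~\ref{pr1}, is the right one.
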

	\begin{proof}
		Indeed, from the definition of $\phi(w_k, v_k)$, one has
		$$
		\langle w_k - v_k, \phi(w_k, v_k) \rangle = \|w_k - v_k\|^2 - \lambda_k \langle w_k - v_k, \mathcal{B} w_k - \mathcal{B} v_k \rangle \geq\|w_k - v_k\|^2.
		$$
		This implies that if $\phi(w_k, v_k) = 0$, then $w_k = v_k$. It follows $v_k \in \Omega_{\mathcal{A} + \mathcal{B}}$ by means of Lemma \ref{lm11}.
	\end{proof}
	\begin{proposition}\label{pr1}
		Let $\{\lambda_k\}$ be an iterative point in {\rm Step 2} of Algorithm {\rm\ref{A:1}}. Then there must exist a nonnegative integer $j_k$ satisfying \eqref{3.1} and there exists $\lambda_{\min}>0$ such that 
		$$
		\lambda_{\min} \leq \lambda_k,  \quad \forall k \in \mathbb{N}.
		$$
	\end{proposition}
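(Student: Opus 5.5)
The plan has two parts. First, for each fixed $k$, show that the Armijo-type search \eqref{3.1} stops after finitely many reductions. Second, show by contradiction that the accepted step sizes cannot accumulate at $0$.

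\textbf{Finite termination.} Fix $w=w_k$ and write $v(\lambda)=J_{\lambda\mathcal{A}}[w-\lambda\mathcal{B}w]$.

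If $w\in\Omega_{\mathcal{A}+\mathcal{B}}$, then $v(\lambda)=w$ for every $\lambda>0$ by Lemma \ref{lm11}. Both sides of \eqref{3.1} vanish, so $j_k=0$ works.

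Otherwise $w\neq v(\lambda)$ for all $\lambda>0$. I would invoke the standard monotonicity property of the forward--backward residual: $\lambda\mapsto \|w-v(\lambda)\|/\lambda$ is nonincreasing. This follows from firm nonexpansiveness of the resolvent (Remark \ref{rm1}(a)) via the usual comparison of $v(\lambda)$ and $v(\lambda')$. Hence for all $\lambda\le s$,
\[
\frac{\|w-v(\lambda)\|}{\lambda}\ \ge\ \frac{\|w-v(s)\|}{s}=:c>0 .
\]
Dividing \eqref{3.1} by $\lambda$, it suffices to have $\|\mathcal{B}w-\mathcal{B}v(\lambda)\|\le \sigma c$.

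As $\lambda\to 0$, nonexpansiveness gives $\|v(\lambda)-J_{\lambda\mathcal{A}}w\|\le \lambda\|\mathcal{B}w\|$. Since $D(\mathcal{A})=\mathcal{H}$ (A2), we also have $J_{\lambda\mathcal{A}}w\to w$. Therefore $v(\lambda)\to w$, and continuity of $\mathcal{B}$ (A3) gives $\|\mathcal{B}w-\mathcal{B}v(\lambda)\|\to0$. So \eqref{3.1} holds for all large $j$, and $j_k$ exists.

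\textbf{Uniform lower bound.} Since $\lambda_k\le s$, the only danger is that $\lambda_{k}\to0$ along a subsequence. Suppose it does. Then $j_k\ge1$ on that subsequence, so the trial step $\lambda_k/\mu$ failed:
\[
\frac{\lambda_k}{\mu}\,\|\mathcal{B}w_k-\mathcal{B}z_k\|>\sigma\|w_k-z_k\|,\qquad z_k=J_{(\lambda_k/\mu)\mathcal{A}}\Big[w_k-\tfrac{\lambda_k}{\mu}\mathcal{B}w_k\Big].
\]
Combining this with the monotone-residual bound above yields
\[
\|\mathcal{B}w_k-\mathcal{B}z_k\|>\sigma\,\frac{\|w_k-z_k\|}{\lambda_k/\mu}\ \ge\ \sigma\,\frac{\|w_k-v_s(w_k)\|}{s},
\]
where $v_s(w_k)=J_{s\mathcal{A}}[w_k-s\mathcal{B}w_k]$.

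To reach a contradiction I need three ingredients.
\begin{enumerate}
\item[(i)] $\{w_k\}$ is bounded. This comes from the Fejér-type estimate that the paper will prove for $\{u_k\}$ under (A4); I would establish it first, independently of $\lambda_{\min}$, using only $\lambda_k>0$ and \eqref{3.1}.
\item[(ii)] $\|w_k-z_k\|\to0$. By nonexpansiveness, $\|w_k-z_k\|\le\|w_k-J_{(\lambda_k/\mu)\mathcal{A}}w_k\|+\frac{\lambda_k}{\mu}\|\mathcal{B}w_k\|$. Each term is bounded by $\frac{\lambda_k}{\mu}\big(\|\mathcal{A}^0w_k\|+\|\mathcal{B}w_k\|\big)$, where $\mathcal{A}^0w_k$ is the minimal-norm element of $\mathcal{A}w_k$. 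This quantity tends to $0$ once $\mathcal{A}$ and $\mathcal{B}$ are bounded on the bounded set containing $\{w_k\}$.
\item[(iii)] $\|\mathcal{B}w_k-\mathcal{B}z_k\|\to0$. This follows from (ii) if $\mathcal{B}$ is uniformly continuous on bounded sets.
\end{enumerate}
Given these, the displayed inequality forces $\|w_k-v_s(w_k)\|\to0$ along the subsequence, i.e.\ the iterates approach a solution. In that regime one either gets a contradiction with $w_k\notin\Omega_{\mathcal{A}+\mathcal{B}}$ at a weak cluster point, or concludes that the lower bound issue is harmless for the convergence analysis. To make this rigorous I would pass to a subsequence $w_k\rightharpoonup\bar w$ and use maximal monotonicity of $\mathcal{A}+\mathcal{B}$ (the Proposition preceding Remark \ref{rm1}).

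\textbf{Main obstacle.} The hard step is the uniform bound. Mere continuity of $\mathcal{B}$ and $D(\mathcal{A})=\mathcal{H}$ give, in infinite dimensions, neither boundedness of $\mathcal{A}$, $\mathcal{B}$ on bounded sets nor uniform continuity of $\mathcal{B}$; they yield only local boundedness. I would first carry out the argument in the setting where these hold (finite dimension, or $\mathcal{B}$ uniformly continuous and $\mathcal{A}$ bounded on bounded sets). I would then check whether the residual monotonicity alone can replace them, for example by bounding $\|w_k-v_s(w_k)\|/s$ below along the subsequence. If it cannot, I would add uniform continuity on bounded sets as an explicit hypothesis.
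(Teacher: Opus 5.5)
Your finite-termination argument is correct: the ratio $\|w-v(\lambda)\|/\lambda$ is nonincreasing, $v(\lambda)\to w$ as $\lambda\downarrow 0$ because $D(\mathcal{A})=\mathcal{H}$, and $\mathcal{B}$ is continuous. That is all the paper's one-line proof, a pointer to \cite[Theorem 3.4 (a)]{T00}, can legitimately supply. The gap is the uniform bound, at the step where $\|w_k-v_s(w_k)\|\to0$ along the subsequence is supposed to give a contradiction. It gives none. It only says the iterates approach $\Omega_{\mathcal{A}+\mathcal{B}}$, which is exactly what a convergent method does, and non-solutions $w_k$ can have all their cluster points in $\Omega_{\mathcal{A}+\mathcal{B}}$. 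Your fallback (``the lower bound issue is harmless'') is a different statement, not a proof of this one. None of the extra hypotheses you contemplate (finite dimension, uniform continuity of $\mathcal{B}$, boundedness of $\mathcal{A}$) can close the gap either. When $\mathcal{B}$ is not Lipschitz at the solution, the claimed $\lambda_{\min}$ need not exist.

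Here is a counterexample. Take $\mathcal{H}=\mathbb{R}$, $\mathcal{A}\equiv 0$, and $\mathcal{B}u=\operatorname{sgn}(u)\sqrt{|u|}$, which is monotone and globally $\tfrac12$-H\"older with $\Omega_{\mathcal{A}+\mathcal{B}}=\{0\}$. For $w>0$ and $\lambda=c\sqrt{w}$ one gets $v(\lambda)=(1-c)w$. Then \eqref{3.1} reduces to $|1-\operatorname{sgn}(1-c)\sqrt{|1-c|}|\le\sigma$, i.e.\ $c\le\sigma(2-\sigma)<1$. Since $\mathcal{B}$ is odd, every accepted step satisfies $\lambda_k\le\sigma(2-\sigma)\sqrt{|w_k|}$, so a positive $\lambda_{\min}$ would keep $w_k$ away from the unique solution.

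Concretely, take $\vartheta_k\equiv 0$ (allowed by Remark~\ref{rm2}(b)), $\gamma\le 1$ and $u_1>0$. Write $c_k=\lambda_k/\sqrt{u_k}$, so $c_k\ge\min\{s/\sqrt{u_1},\mu\sigma(2-\sigma)\}$. One computes $\phi(w_k,v_k)=c_k\sqrt{1-c_k}\,u_k$, $\delta_k=1/\sqrt{1-c_k}$ and $u_{k+1}=(1-\gamma c_k)u_k$. Thus $u_k\downarrow 0$ linearly while $\lambda_k\to 0$.

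What does hold is $\lambda_k\ge\min\{s,\mu\sigma/L\}$ when $\mathcal{B}$ is $L$-Lipschitz, since then every $\lambda\le\sigma/L$ passes \eqref{3.1}; Lipschitz continuity on a bounded set containing all iterates and trial points suffices. Otherwise the bound must be dropped in favour of the usual Armijo dichotomy. On a subsequence with $\lambda_k\to 0$, the rejected trial step $\lambda_k/\mu$ gives your displayed inequality. Your (ii) then follows from the monotone ratio without any boundedness of $\mathcal{A}$, since $\|w_k-z_k\|\le\mu^{-1}\|w_k-v_k\|\to 0$ by \eqref{3.10}. With (iii), i.e.\ uniform continuity of $\mathcal{B}$ on bounded sets (automatic in finite dimension), weak cluster points lie in $\Omega_{\mathcal{A}+\mathcal{B}}$. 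That is what Lemma~\ref{lm4} actually needs. Note that Theorems~\ref{th3} and~\ref{th4} also build $\lambda_{\min}$ into their constants.
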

	\begin{proof}  
		The proof is along the lines of \cite[Theorem 3.4 (a)]{T00}. 
	\end{proof}

	\begin{lemma}\label{lm4} Assume that Assumptions A2 and A3 hold. Let  $ \{w_k\}$  be a sequence generated by Algorithm \ref{A:1} such that $ \lim\limits_{j \to \infty} \|w_{k_j} - J_{\lambda_{k_j}\mathcal{A}}[w_{k_j} - \lambda_k \mathcal{B}(w_{k_j})]\| = 0 $ and $ \{w_{k}\} $ converges weakly to $ w^* \in \mathcal{H}$, then $ w^* \in \Omega_{\mathcal{A}+\mathcal{B}} $.
	\end{lemma}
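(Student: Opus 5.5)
The plan is to use the maximal monotonicity of $\mathcal{A}+\mathcal{B}$, which holds by the Proposition following Lemma \ref{lm1} under (A2)--(A3). Its graph is weak--strong closed, and I will verify the closedness directly through the characterization of maximality recalled in the Definition. I read the hypothesis as applying to the subsequence: $w_{k_j}\rightharpoonup w^*$ and $\|w_{k_j}-v_{k_j}\|\to 0$, where $v_{k_j}=J_{\lambda_{k_j}\mathcal{A}}[w_{k_j}-\lambda_{k_j}\mathcal{B}w_{k_j}]$. In particular $v_{k_j}\rightharpoonup w^*$ as well.

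Fix $(x,y)\in G(\mathcal{A}+\mathcal{B})$, so that $y-\mathcal{B}x\in\mathcal{A}x$. From the definition of the resolvent,
\[
\frac{1}{\lambda_{k_j}}\bigl(w_{k_j}-v_{k_j}\bigr)-\mathcal{B}w_{k_j}\in\mathcal{A}v_{k_j}.
\]
Monotonicity of $\mathcal{A}$ then gives
\[
\Bigl\langle x-v_{k_j},\, y-\mathcal{B}x-\tfrac{1}{\lambda_{k_j}}(w_{k_j}-v_{k_j})+\mathcal{B}w_{k_j}\Bigr\rangle\ge 0.
\]
Rearranging and using monotonicity of $\mathcal{B}$ in the form $\langle x-v_{k_j},\mathcal{B}x-\mathcal{B}v_{k_j}\rangle\ge0$, I obtain
\[
\langle x-v_{k_j},y\rangle\ge \tfrac{1}{\lambda_{k_j}}\langle x-v_{k_j},w_{k_j}-v_{k_j}\rangle+\langle x-v_{k_j},\mathcal{B}v_{k_j}-\mathcal{B}w_{k_j}\rangle .
\]

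Next I pass to the limit $j\to\infty$. The left-hand side tends to $\langle x-w^*,y\rangle$ by weak convergence. Since $\{v_{k_j}\}$ is weakly convergent, it is bounded. By Proposition \ref{pr1}, $\lambda_{k_j}\ge\lambda_{\min}>0$, so the first term on the right tends to $0$.

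The main obstacle is the second term, $\|\mathcal{B}v_{k_j}-\mathcal{B}w_{k_j}\|$. Continuity of $\mathcal{B}$ alone does not give uniform continuity along weakly convergent sequences, so this term cannot be controlled by $\|w_{k_j}-v_{k_j}\|\to0$ directly. My approach is to use the Armijo rule \eqref{3.1} instead:
\[
\lambda_{k_j}\|\mathcal{B}w_{k_j}-\mathcal{B}v_{k_j}\|\le\sigma\|w_{k_j}-v_{k_j}\|.
\]
Combined with $\lambda_{k_j}\ge\lambda_{\min}$, this yields $\|\mathcal{B}w_{k_j}-\mathcal{B}v_{k_j}\|\le(\sigma/\lambda_{\min})\|w_{k_j}-v_{k_j}\|\to0$. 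This is exactly where the line search replaces the Lipschitz assumption. Because $\{v_{k_j}\}$ is bounded, the second term therefore vanishes.

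Letting $j\to\infty$ gives $\langle x-w^*,\,y-0\rangle\ge0$ for every $(x,y)\in G(\mathcal{A}+\mathcal{B})$. By the maximality characterization, $0\in(\mathcal{A}+\mathcal{B})w^*$, that is, $w^*\in\Omega_{\mathcal{A}+\mathcal{B}}$.
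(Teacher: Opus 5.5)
Your proposal is correct and is essentially the paper's own proof. Both use the resolvent inclusion $\frac{1}{\lambda_{k_j}}(w_{k_j}-v_{k_j})-\mathcal{B}w_{k_j}\in\mathcal{A}v_{k_j}$, then monotonicity of $\mathcal{A}$ and of $\mathcal{B}$, then the line-search bound \eqref{3.1} with $\lambda_{k_j}\ge\lambda_{\min}$ from Proposition~\ref{pr1} to force $\|\mathcal{B}w_{k_j}-\mathcal{B}v_{k_j}\|\to 0$, and finally the maximality characterization for $\mathcal{A}+\mathcal{B}$; you are slightly more careful than the paper in stating that everything is along the subsequence and that $v_{k_j}\rightharpoonup w^*$ follows from $\|w_{k_j}-v_{k_j}\|\to0$.
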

	\begin{proof}Let $ (v, u) \in \text{G}(\mathcal{A} + \mathcal{B}) $, that is, $ u \in (\mathcal{A} + \mathcal{B})v $. From the definition of $ v_k $ of the Algorithm \ref{A:1}, we see that $ (I - \lambda_{{k_j}} \mathcal{B}) w_{{k_j}} \in (I + \lambda_{{k_j}} \mathcal{A}) v_{{k_j}} $. This implies that
		$$
		\frac{1}{\lambda_{k}} (w_{{k_j}} - v_{{k_j}} - \lambda_{{k_j}} \mathcal{B} w_{{k_j}}) \in \mathcal{A} v_{{k_j}}.
		$$
		By the monotonicity of  $ \mathcal{A} $, we infer that
		$$
		\left\langle u - \mathcal{B} v - \frac{1}{\lambda_{k}} (w_{{k_j}} - v_{{k_j}} - \lambda_{{k_j}} \mathcal{B} w_{{k_j}}), v - v_{{k_j}} \right\rangle \geq 0.
		$$
		Combining this with the monotonicity of $ \mathcal{B} $, it follows
		\begin{align}\label{eq3a}
			\langle v - v_{{k_j}}, u \rangle &\geq \left\langle v - v_{{k_j}}, \mathcal{B} v + \frac{1}{\lambda_{{k_j}}} (w_{{k_j}} - v_{{k_j}} - \lambda_{{k_j}} \mathcal{B} w_{{k_j}}) \right\rangle\notag\\
			&= \langle v - v_{{k_j}}, \mathcal{B} v - \mathcal{B} v_{{k_j}} \rangle + \langle v - v_{{k_j}}, \mathcal{B} v_{{k_j}} - \mathcal{B} w_{{k_j}} \rangle + \frac{1}{\lambda_{{k_j}}}\langle v - v_{{k_j}},  w_{{k_j}} - v_{{k_j}} \rangle\notag\\
			&\geq \langle v - v_{{k_j}}, \mathcal{B} v_{{k_j}} - \mathcal{B} w_{{k_j}} \rangle + \frac{1}{\lambda_{{k_j}}}\langle v - v_{{k_j}}, w_{{k_j}} - v_{{k_j}} \rangle.
		\end{align}
		Moreover, by $ \lim\limits_{j \to \infty} \|w_{k_j} - J_{\lambda_{k_j}\mathcal{A}}[w_{k_j} - \lambda_{k_j} \mathcal{B}w_{k_j}]\| = 0 $, Proposition \ref{pr1} and \eqref{3.1}, we observe that 
		\begin{align*}
			\lambda_{\min} \|\mathcal{B}w_{k_j} - \mathcal{B}v_{k_j}\| \leq \lambda_{k_j} \|\mathcal{B}w_{k_j} - \mathcal{B}v_{k_j}\| \leq \sigma \|w_{k_j} - J_{\lambda_{k_j}\mathcal{A}}[w_{k_j} - \lambda_{k_j} \mathcal{B}w_{k_j}]\|.
		\end{align*}
		This implies that
		$$
		\lim\limits_{j \to \infty} \|\mathcal{B} w_{{k_j}} - \mathcal{B} v_{{k_j}}\| = 0.
		$$
		It follows with \eqref{eq3a} and $\lambda_{\min} \leq \lambda_{{k_j}}$
		$$
		\lim_{j \to \infty} \langle v - v_{k_{j}}, u \rangle = \langle v - w^*, u \rangle \geq 0.
		$$
		By using the maximal monotonicity of $ (\mathcal{A} + \mathcal{B}) $, we obtain
		$$
		0 \in (\mathcal{A} + \mathcal{B})w^* \quad \Rightarrow\quad  w^* \in \Omega_{\mathcal{A}+\mathcal{B}}.
		$$
	\end{proof}
	\begin{lemma}\label{lm2}
		Assume that Assumptions {\rm A1--A3} hold. Let $ \{u_k\}, \{v_k\} $ and $\{w_k\}$  be three sequences generated by Algorithm {\rm\ref{A:1}}. Then for any $ u^* \in \Omega_{\mathcal{A}+\mathcal{B}} $, we have
		$$
		\|u_{k+1} - u^*\|^2 \leq \|w_k - u^*\|^2 -  \gamma (2-\gamma) \frac{\langle w_k - v_k, \phi(w_k, v_k) \rangle^2}{\|\phi(w_k,v_k)\|^2},
		$$
		and
		$$
		\frac{\langle w_k - v_k, \phi(w_k, v_k) \rangle}{\|\phi(w_k, v_k)\|} \geq \frac{1 - \sigma}{1 + \sigma}\|w_k - v_k\|.
		$$
	\end{lemma}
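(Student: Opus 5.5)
The two inequalities in Lemma~\ref{lm2} are proved separately. The first comes from expanding the update of Step~4. The key input is that $\langle w_k-u^*,\phi(w_k,v_k)\rangle \ge \langle w_k-v_k,\phi(w_k,v_k)\rangle$, or equivalently $\langle v_k-u^*,\phi(w_k,v_k)\rangle\ge 0$. Everything else in the first inequality follows from this.

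To prove that key input, use the definition of $v_k$. It gives
$$\tfrac{1}{\lambda_k}(w_k-v_k)-\mathcal{B}w_k\in\mathcal{A}v_k.$$
Adding $\mathcal{B}v_k$ to both sides yields
$$\tfrac{1}{\lambda_k}\phi(w_k,v_k)\in(\mathcal{A}+\mathcal{B})v_k.$$
Since $0\in(\mathcal{A}+\mathcal{B})u^*$ and $\mathcal{A}+\mathcal{B}$ is monotone, we get $\langle \phi(w_k,v_k),v_k-u^*\rangle\ge0$. Here $\mathcal{A}+\mathcal{B}$ is monotone because it is the sum of the monotone operators in (A2) and (A3).

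Now expand the update. Writing $\phi=\phi(w_k,v_k)$ and $\delta_k=\langle w_k-v_k,\phi\rangle/\|\phi\|^2$,
$$\|u_{k+1}-u^*\|^2=\|w_k-u^*\|^2-2\gamma\delta_k\langle w_k-u^*,\phi\rangle+\gamma^2\delta_k^2\|\phi\|^2.$$
By the key inequality, $\langle w_k-u^*,\phi\rangle \ge \langle w_k-v_k,\phi\rangle=\delta_k\|\phi\|^2$. Also $\delta_k\ge0$, as shown in the earlier proposition on $\phi(w_k,v_k)=0$. Substituting gives
$$\|u_{k+1}-u^*\|^2\le\|w_k-u^*\|^2-(2\gamma-\gamma^2)\delta_k^2\|\phi\|^2 .$$
Since $\delta_k^2\|\phi\|^2=\langle w_k-v_k,\phi\rangle^2/\|\phi\|^2$, this is exactly the first claimed bound. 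If $\phi=0$ the algorithm stops, so we may assume $\phi\ne0$.

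For the second inequality, we bound the numerator from below and the denominator from above. For the numerator, monotonicity of $\mathcal{B}$ gives
$$\langle w_k-v_k,\phi\rangle=\|w_k-v_k\|^2-\lambda_k\langle w_k-v_k,\mathcal{B}w_k-\mathcal{B}v_k\rangle .$$
Rather than dropping the last term, we use Cauchy--Schwarz together with the Armijo condition \eqref{3.1}, which says $\lambda_k\|\mathcal{B}w_k-\mathcal{B}v_k\|\le\sigma\|w_k-v_k\|$. This gives
$$\langle w_k-v_k,\phi\rangle\ge(1-\sigma)\|w_k-v_k\|^2 .$$
For the denominator, the triangle inequality and \eqref{3.1} give $\|\phi\|\le(1+\sigma)\|w_k-v_k\|$. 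Dividing the two bounds yields the claim.

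The main obstacle is the first step, namely identifying $\phi(w_k,v_k)/\lambda_k$ as an element of $(\mathcal{A}+\mathcal{B})v_k$. The remaining steps are direct substitutions.
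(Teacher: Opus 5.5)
Your proposal is correct and follows the paper's proof essentially step for step: the paper also writes $\frac{1}{\lambda_k}(w_k-v_k-\lambda_k\mathcal{B}w_k)=q_k\in\mathcal{A}v_k$, applies monotonicity of $\mathcal{A}+\mathcal{B}$ at $q_k+\mathcal{B}v_k=\frac{1}{\lambda_k}\phi(w_k,v_k)$ to get $\langle w_k-u^*,\phi\rangle\ge\langle w_k-v_k,\phi\rangle$, expands the update, and gets the second claim from the Cauchy--Schwarz/Armijo bounds $(1-\sigma)\|w_k-v_k\|^2$ and $(1+\sigma)\|w_k-v_k\|$. One small repair: take $\delta_k\ge0$ from your own bound $\langle w_k-v_k,\phi\rangle\ge(1-\sigma)\|w_k-v_k\|^2$, not from the earlier proposition, whose displayed inequality runs the wrong way (monotonicity of $\mathcal{B}$ gives $\lambda_k\langle w_k-v_k,\mathcal{B}w_k-\mathcal{B}v_k\rangle\ge0$, so that term can only decrease the inner product); for the same reason your identity is pure algebra rather than a consequence of monotonicity, and the term could never have been dropped to get a lower bound.
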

	\begin{proof} From the definition of $ u_{k+1} $, we have
		\begin{align}\label{3.3}
			\|u_{k+1} - u^*\|^2 &= \|w_k - \gamma\delta_k \phi(w_k, v_k) - u^*\|^2 \notag\\
			&= \|w_k - u^*\|^2 - 2 \gamma \delta_k \langle w_k - u^*, \phi(w_k, v_k) \rangle + \gamma^2 \delta_{k}^2 \|\phi(w_k, v_k)\|^2.
		\end{align}
		On the other hand, we get  
		\begin{align}\label{15}
			\langle w_k - u^*, \phi(w_k, v_k) \rangle
			& = \langle w_k - v_k, \phi(w_k, v_k) \rangle + \langle v_k - u^*, \phi(w_k, v_k) \rangle\notag\\
			&= \langle w_k - v_k, \phi(w_k, v_k) \rangle + \langle v_k - u^*, w_k - v_k - \lambda_k (\mathcal{B} w_k - \mathcal{B} v_k) \rangle.
		\end{align}
		Since $ v_k = (I + \lambda_k \mathcal{A})^{-1}(I - \lambda_k \mathcal{B})w_k $, implies that $ (I - \lambda_k \mathcal{B})w_k \in (I + \lambda_k \mathcal{A})v_k $, and  
		thus, there exists a point $ q_k \in \mathcal{A}v_k $ such that  
		$$
		w_k - \lambda_k \mathcal{B}w_k = v_k + \lambda_k q_k,
		$$
		that is
		\begin{equation}\label{16}
			q_k = \frac{1}{\lambda_k}(w_k - v_k - \lambda_k \mathcal{B} w_k).
		\end{equation}
		According to $ u^* \in \Omega_{\mathcal{A}+\mathcal{B}} $, we have $ 0 \in (\mathcal{A} + \mathcal{B})u^* $ and $ q_k+\mathcal{B} v_k  \in (\mathcal{A} + \mathcal{B})v_k $. Since $ \mathcal{A} + \mathcal{B} $  
		is monotone, then it follows  
		$$
		\langle q_k+\mathcal{B} v_k, v_k - u^* \rangle \geq 0.
		$$
		Substituting \eqref{16}, it yields 
		$$
		\frac{1}{\lambda_k} \langle w_k - v_k - \lambda_k \mathcal{B} w_k + \lambda_k \mathcal{B} v_k, v_k - u^* \rangle \geq 0,
		$$
		and so,  
		\begin{equation}\label{17}
			\langle w_k - v_k - \lambda_k (\mathcal{B} w_k - \mathcal{B} v_k), v_k - u^* \rangle \geq 0.
		\end{equation}
		Combining \eqref{15} and \eqref{17}, we obtain  
		$$
		\langle w_k - u^*, \phi(w_k, v_k) \rangle \geq \langle w_k - v_k, \phi(w_k, v_k) \rangle.
		$$
		It follows with \eqref{3.2} and \eqref{3.3}
		\begin{align}\label{3.4}
			\|u_{k+1} - u^*\|^2 &\leq\|w_k - u^*\|^2 - 2 \gamma \delta_k \langle w_k - v_k, \phi(w_k, v_k) \rangle + \gamma^2 \delta_{k}^2 \|\phi(w_k, v_k)\|^2\notag\\
			&=\|w_k - u^*\|^2 -  \gamma (2-\gamma) \frac{\langle w_k - v_k, \phi(w_k, v_k) \rangle^2}{\|\phi(w_k,v_k)\|^2}.	
		\end{align}
		By definition of $ \phi(w_k, v_k) $ and \eqref{3.1}, we have  
		\begin{align}\label{3.5}
			\|\phi(w_k, v_k)\| &= \|w_k - v_k - \lambda_k (\mathcal{B}(w_k) - \mathcal{B}(v_k))\| \notag\\
			&\leq \|w_k - v_k\| + \lambda_k \|\mathcal{B}(w_k) - \mathcal{B}(v_k)\| \notag\\
			&\leq (1 + \sigma) \|w_k - v_k\|. 
		\end{align}  
		On the other hand, we deduce that  
		\begin{align}\label{3.6}
			\langle w_k - v_k, \phi(w_k, v_k) \rangle & = \langle w_k - v_k, w_k - v_k - \lambda_k (\mathcal{B}(w_k) - \mathcal{B}(v_k)) \rangle \notag\\
			&= \|w_k - v_k\|^2 - \lambda_k \langle w_k - v_k, \mathcal{B}(w_k) - \mathcal{B}(v_k) \rangle\notag\\
			&\geq \|w_k - v_k\|^2 - \lambda_k \|w_k - v_k\| \|\mathcal{B}(w_k) - \mathcal{B}(v_k)\|\notag\\ 
			&\geq \|w_k - v_k\|^2 - \sigma \|w_k - v_k\|^2 \notag\\
			&= (1 - \sigma) \|w_k - v_k\|^2.
		\end{align}  
		By virtue of \eqref{eq3.5} and \eqref{eq3.6}, we obtain
		\begin{equation*}
			\frac{\langle w_k - v_k, \phi(w_k, v_k) \rangle}{\|\phi(w_k, v_k)\|} \geq \frac{1 - \sigma}{1 + \sigma}\|w_k - v_k\|.	
		\end{equation*}
	\end{proof}
	\begin{theorem}\label{th1}
		Suppose that Assumptions  A$1$--A$3$ hold.	Then the sequence $\{u_k\}$ generated by Algorithm \ref{A:1}  converges weakly to $u^*\in \Omega_{\mathcal{A}+\mathcal{B}}$.
	\end{theorem}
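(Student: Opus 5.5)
The plan is the standard inertial Fejér-type argument: prove that $\lim_k\|u_k-u^*\|$ exists for every $u^*\in\Omega_{\mathcal{A}+\mathcal{B}}$, show $\|w_k-v_k\|\to 0$, and then combine Lemma \ref{lm4} with Lemma \ref{lm5}. We use the inertial condition (A4) throughout, since the theorem implicitly relies on it.

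\textbf{Recursive inequality.} Fix $u^*\in\Omega_{\mathcal{A}+\mathcal{B}}$. From the two estimates of Lemma \ref{lm2},
\[
\|u_{k+1}-u^*\|^2\le \|w_k-u^*\|^2-\gamma(2-\gamma)\tfrac{(1-\sigma)^2}{(1+\sigma)^2}\|w_k-v_k\|^2 .
\]
We also need to express $\|w_k-v_k\|$ through $\|u_{k+1}-w_k\|$. Since $u_{k+1}-w_k=-\gamma\delta_k\phi(w_k,v_k)$ and $\delta_k\|\phi\|\le \|w_k-v_k\|$ (Cauchy–Schwarz), we get $\|u_{k+1}-w_k\|\le\gamma\|w_k-v_k\|$. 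Combining the two gives
\[
\|u_{k+1}-u^*\|^2\le\|w_k-u^*\|^2-\tfrac{2-\gamma}{\gamma}\tfrac{(1-\sigma)^2}{(1+\sigma)^2}\|u_{k+1}-w_k\|^2 .
\]
This constant is at least $\mathcal{E}$, so the weaker constant $\mathcal{E}$ can be used. Identity (II) applied to $w_k=(1+\vartheta_k)u_k-\vartheta_k u_{k-1}$ yields
\[
\|w_k-u^*\|^2=(1+\vartheta_k)\|u_k-u^*\|^2-\vartheta_k\|u_{k-1}-u^*\|^2+\vartheta_k(1+\vartheta_k)\|u_k-u_{k-1}\|^2 .
\]
For the cross term, expand $\|u_{k+1}-w_k\|^2=\|(u_{k+1}-u_k)-\vartheta_k(u_k-u_{k-1})\|^2$ and bound it below using Young's inequality, in the form $\ge(1-\vartheta_k)\|u_{k+1}-u_k\|^2-\vartheta_k(1-\vartheta_k)\|u_k-u_{k-1}\|^2$ as in (III).

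\textbf{Lyapunov sequence (main obstacle).} Set $\varphi_k=\|u_k-u^*\|^2$ and define
\[
\Gamma_k=\varphi_k-\vartheta_k\varphi_{k-1}+c_k\|u_k-u_{k-1}\|^2 ,
\]
with a suitable $c_k$, for instance $c_k=\vartheta_k(1+\vartheta_k)+\mathcal{E}\vartheta_k(1-\vartheta_k)$. Using that $\vartheta_k$ is nondecreasing, we aim for
\[
\Gamma_{k+1}-\Gamma_k\le -\rho\|u_{k+1}-u_k\|^2
\]
with some $\rho>0$. Obtaining this is exactly where the bound $\vartheta<\mathcal{E}/(\mathcal{E}+\max\{1,\mathcal{E}\})$ has to be used, and balancing the coefficients is the delicate part. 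If this choice of $c_k$ does not close, the first fallback is the cruder Young splitting $\|a-\vartheta b\|^2\ge(1-\vartheta)\|a\|^2+(\vartheta^2-\vartheta)\|b\|^2$, with the coefficients adjusted to match the $\max\{1,\mathcal{E}\}$ term.

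\textbf{Consequences of the descent inequality.}
\begin{itemize}
\item Monotonicity of $\Gamma_k$ and $\Gamma_k\ge\varphi_k-\vartheta\varphi_{k-1}$ give $\varphi_k\le\vartheta^k\varphi_0+\Gamma_1/(1-\vartheta)$. Hence $\{u_k\}$ is bounded and $\Gamma_k$ is bounded below.
\item Telescoping then gives $\sum_k\|u_{k+1}-u_k\|^2<\infty$.
\item The recursion $\varphi_{k+1}\le\varphi_k+\vartheta_k(\varphi_k-\varphi_{k-1})+2\vartheta\|u_k-u_{k-1}\|^2$ is in the form of Lemma \ref{lm8}, so $\lim_k\varphi_k$ exists. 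This is condition (a) of Lemma \ref{lm5}.
\end{itemize}

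\textbf{Identifying the weak limit.} We have
\[
\|u_{k+1}-w_k\|\le\|u_{k+1}-u_k\|+\vartheta\|u_k-u_{k-1}\|\to0 ,
\]
and the first displayed inequality rewrites as
\[
c\|w_k-v_k\|^2\le\|w_k-u^*\|^2-\|u_{k+1}-u^*\|^2\le \|w_k-u_{k+1}\|\,\big(\|w_k-u^*\|+\|u_{k+1}-u^*\|\big)\to0 ,
\]
so $\|w_k-v_k\|\to0$. If $u_{k_j}\rightharpoonup x$, then $w_{k_j}\rightharpoonup x$ as well, since $\|w_k-u_k\|\to0$. Lemma \ref{lm4}, applied along this subsequence, gives $x\in\Omega_{\mathcal{A}+\mathcal{B}}$. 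Lemma \ref{lm5} then yields $u_k\rightharpoonup u^*\in\Omega_{\mathcal{A}+\mathcal{B}}$.
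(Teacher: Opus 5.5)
Your outline follows the paper's proof almost step for step: Lemma \ref{lm2}, the identity for $\|w_k-u^*\|^2$, the bound (III), the same Lyapunov sequence with $c_k=\rho_k$, and then Lemmas \ref{lm8}, \ref{lm4} and \ref{lm5}. Your Cauchy--Schwarz constant and your difference-of-squares argument for $\|w_k-v_k\|\to0$ are correct, and slightly cleaner than the paper's.

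\textbf{The gap.} The genuine gap is the descent inequality, which you only announce; it does not follow from the bound you plan to use. Your recursion, the definition of $\Gamma_k$ and $\vartheta_k\le\vartheta_{k+1}$ give only
\[
\Gamma_{k+1}-\Gamma_k\le-\bigl(\mathcal{E}(1-\vartheta_k)-c_{k+1}\bigr)\|u_{k+1}-u_k\|^2 .
\]
For a constant sequence $\vartheta_k\equiv\bar\vartheta$, which (A4) permits, the coefficient is $\mathcal{E}(1-\bar\vartheta)^2-\bar\vartheta(1+\bar\vartheta)$. When $\mathcal{E}\le1$, (A4) permits every $\bar\vartheta<\mathcal{E}/(1+\mathcal{E})$, and at that endpoint the coefficient equals $-2\mathcal{E}^2/(1+\mathcal{E})^2<0$. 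So for $\bar\vartheta$ just below the endpoint the estimate gives no descent. You then have no summability of $\|u_{k+1}-u_k\|^2$, no boundedness, and no way to invoke Lemma \ref{lm8}.

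\textbf{Your fallbacks do not help.} The splitting $\|a-\vartheta b\|^2\ge(1-\vartheta)\|a\|^2+(\vartheta^2-\vartheta)\|b\|^2$ is literally (III), so it changes nothing. A better constant does not rescue (A4) in general either. Even the exact constant $\frac{2-\gamma}{\gamma}$, which comes from $\|u_{k+1}-w_k\|^2=\gamma^2\langle w_k-v_k,\phi(w_k,v_k)\rangle^2/\|\phi(w_k,v_k)\|^2$, fails for small $\sigma$, because $\mathcal{E}\to\frac{2-\gamma}{\gamma}$ as $\sigma\to0$. For example, with $\gamma=1$ the argument needs $\bar\vartheta<1/3$, while (A4) allows $\bar\vartheta$ close to $1/2$.

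\textbf{The paper does not close this step either.} It bounds $c_{k+1}\le\vartheta(1+\max\{1,\mathcal{E}\})$, obtains $\kappa=\mathcal{E}-\vartheta(\mathcal{E}+1+\max\{1,\mathcal{E}\})$, and asserts $\kappa>0$ from \eqref{inertialCondition}. But $\kappa>0$ requires $\vartheta<\mathcal{E}/(1+\mathcal{E}+\max\{1,\mathcal{E}\})$, which (A4) does not imply.

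\textbf{What is missing.} You need a stronger inertial hypothesis. One option is $\vartheta<\mathcal{E}/(1+\mathcal{E}+\max\{1,\mathcal{E}\})$. A sharper one is $E(1-\vartheta)^2>\vartheta(1+\vartheta)$, where $E$ is the constant in your Fej\'er inequality and $c_k$ is defined with $E$ in place of $\mathcal{E}$. Under such a hypothesis, use $\vartheta_k\le\vartheta_{k+1}\le\vartheta$ and the fact that $t\mapsto E(1-t)^2-t(1+t)$ decreases on $[0,1]$. This gives $E(1-\vartheta_k)-c_{k+1}\ge E(1-\vartheta)^2-\vartheta(1+\vartheta)>0$, and the rest of your argument goes through as written. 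You were right to add (A4), which the statement omits, but (A4) as stated is not enough for this argument.
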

	\begin{proof}
		From Lemma \ref{lm2}, we have
		$$
		\|u_{k+1} - u^*\|^2 \leq \|w_k - u^*\|^2 -  \gamma (2-\gamma) \frac{\langle w_k - v_k, \phi(w_k, v_k) \rangle^2}{\|\phi(w_k,v_k)\|^2}
		$$
		and
		$$
		\frac{\langle w_k - v_k, \phi(w_k, v_k) \rangle}{\|\phi(w_k, v_k)\|} \geq \frac{1 - \sigma}{1 + \sigma}\|w_k - v_k\|.
		$$
		The above relations imply that 
		\begin{align}\label{eq3.2}
			\|u_{k+1} - u^*\|^2 &\leq \|w_k - u^*\|^2 -  \gamma (2-\gamma) \frac{(1 - \sigma)^2}{(1 + \sigma)^2}\|w_k - v_k\|^2.
		\end{align}
		By definition of $ \phi(w_k, v_k) $ and \eqref{3.1}, we have  
		\begin{align*}
			\|\phi(w_k, v_k)\| &= \|w_k - v_k - \lambda_k (\mathcal{B}(w_k) - \mathcal{B}(v_k))\| \\
			&\geq \|w_k - v_k\| - \lambda_k \|\mathcal{B}(w_k) - \mathcal{B}(v_k)\| \\
			&\geq (1 - \sigma) \|w_k - v_k\|,
		\end{align*}  
		that is,
		$$
		\frac{\|w_k - v_k\|}{\|\phi(w_k, v_k)\|} \leq \frac{1}{1-\sigma}. 
		$$
		Therefore, it yields with \eqref{eq3.5} and \eqref{eq3.6}
		\begin{equation*}
			\frac{(1-\sigma)}{(1+\sigma)^2} \leq  \frac{\langle w_k - v_k, \phi(w_k, v_k) \rangle}{\|\phi(w_k,v_k)\|^2} \leq \frac{\|w_k - v_k\|}{\|\phi(w_k, v_k)\|} \leq \frac{1}{1-\sigma}.
		\end{equation*}
		that is,
		\begin{equation}\label{3.8}
			\frac{(1-\sigma)}{(1+\sigma)^2} \leq \delta_k  \leq \frac{1}{1-\sigma}.
		\end{equation}
		However, we have
		$$
		\|u_{k+1} - w_k\| = \gamma\delta_k\|\phi(w_k, v_k)\| \leq \gamma\delta_k(1+\sigma)\|w_k - v_k\| \leq \gamma\frac{1+\sigma}{1-\sigma}\|w_k - v_k\|,
		$$
		and so,
		\begin{equation}\label{3.7}
			\frac{1+\sigma}{1-\sigma}\|w_k - v_k\| \geq \frac{1}{\gamma}\|u_{k+1} - w_k\|.
		\end{equation}
		This together with \eqref{eq3.2} implies that
		\begin{align}\label{eq3.3}
			\|u_{k+1} - u^*\|^2 &\leq \|w_k - u^*\|^2 -  \frac{2-\gamma}{\gamma} \frac{(1 - \sigma)^4}{(1 + \sigma)^4}\|u_{k+1} - w_k\|^2\notag\\
			&=\|w_k - u^*\|^2 - \mathcal{E}\|u_{k+1} - w_k\|^2,
		\end{align}
		where $\mathcal{E} := \frac{2-\gamma}{\gamma} \frac{(1 - \sigma)^4}{(1 + \sigma)^4}$.   From the definition of $ w_k $, we get
		\begin{align}\label{eq3.4}
			\|w_k - u^*\|^2 &= \|u_k + \vartheta_k(u_k - u_{k-1}) - u^*\|^2 \notag\\
			&= \|(1 + \vartheta_k)(u_k - u^*) - \vartheta_k(u_{k-1} - u^*)\|^2 \notag\\
			&= (1 + \vartheta_k)\|u_k - u^*\|^2 - \vartheta_k\|u_{k-1} - u^*\|^2 + \vartheta_k(1 + \vartheta_k)\|u_k - u_{k-1}\|^2.	
		\end{align}
		By combining \eqref{eq3.3} and \eqref{eq3.4}, we get
		\begin{multline}\label{eq3.5}
			\|u_{k+1} - u^*\|^2 \leq (1 + \vartheta_k)\|u_k - u^*\|^2 - \vartheta_k\|u_{k-1} - u^*\|^2 + \vartheta_k(1 + \vartheta_k)\|u_k - u_{k-1}\|^2\\
			- \mathcal{E}\|u_{k+1} - w_k\|^2.
		\end{multline}  
		Further, using (III), we have
		\begin{align}\label{eq3.6}
			\|u_{k+1} - w_k\|^2 &= \|u_{k+1} - u_k - \vartheta_k(u_k - u_{k-1})\|^2 \notag\\
			&\geq (1-\vartheta_k)\|u_{k+1} - u_k\|^2 - \vartheta_k(1-\vartheta_k)\|u_k - u_{k-1}\|^2.
		\end{align}
		Applying this into \eqref{eq3.5}, we obtain
		\begin{multline}\label{eq3.18}
			\|u_{k+1} - u^*\|^2 \leq (1 + \vartheta_k)\|u_k - u^*\|^2 - \vartheta_k\|u_{k-1} - u^*\|^2 + \vartheta_k(1 + \vartheta_k)\|u_k - u_{k-1}\|^2\\
			- \mathcal{E}(1-\vartheta_k)\|u_{k+1} - u_k\|^2 + \mathcal{E}\vartheta_k(1-\vartheta_k)\|u_k - u_{k-1}\|^2.
		\end{multline}  
		It implies that
		\begin{multline}\label{eq3.7}
			\|u_{k+1} - u^*\|^2 \leq (1 + \vartheta_k)\|u_k - u^*\|^2 - \vartheta_k\|u_{k-1} - u^*\|^2 + \rho_k\|u_k - u_{k-1}\|^2\\
			- \mathcal{E}(1-\vartheta_k)\|u_{k+1} - u_k\|^2,
		\end{multline}  
		where $\rho_k= \vartheta_k(1 + \vartheta_k+\mathcal{E}(1-\vartheta_k))$. Replace the following
		$$
		\varphi_k := \|u_k - u^*\|^2 - \vartheta_k\|u_{k-1} - u^*\|^2 + \rho_k\|u_k - u_{k-1}\|^2.
		$$
		Thus, we obtain  
		\begin{multline*}
			\varphi_{k+1} - \varphi_k = \|u_{k+1} - u^*\|^2 - (1 + \vartheta_{k+1})\|u_k - u^*\|^2 + \vartheta_k\|u_{k-1} - u^*\|^2 +\rho_{k+1}\|u_{k+1} - u_k\|^2 \\-\rho_k\|u_k - u_{k-1}\|^2.
		\end{multline*}
		Since $\{\vartheta_k\}$ is nondecreasing in $(0,\,1)$. Then it yields that
		\begin{multline*}
			\varphi_{k+1} - \varphi_k = \|u_{k+1} - u^*\|^2 - (1 + \vartheta_k)\|u_k - u^*\|^2 + \vartheta_k\|u_{k-1} - u^*\|^2 + \rho_{k+1}\|u_{k+1} - u_k\|^2 \\- \rho_k\|u_k - u_{k-1}\|^2.
		\end{multline*}
		This follows with \eqref{eq3.7}
		\begin{equation}\label{eq3.81}
			\varphi_{k+1} - \varphi_k  \leq -(\mathcal{E}(1-\vartheta_k) - \rho_{k+1})\|u_{k+1} - u_k\|^2.
		\end{equation}
		One has
		\begin{align}\label{eq2.1}
			\rho_k &= \vartheta_k(1 + \vartheta_k) + \mathcal{E}\vartheta_k(1-\vartheta_k) \nonumber\\
			& \leq \vartheta_k(1+\vartheta_k +\mathcal{E}(1-\vartheta_k))\nonumber\\
			&\leq \vartheta_k(1+ \max\{1, \mathcal{E}\}).
		\end{align}
		Since $ 0 \leq \vartheta_k \leq \vartheta_{k+1} \leq \vartheta $, this implies that
		\begin{align*}
			-(\mathcal{E}(1-\vartheta_k) - \rho_{k+1}) &\leq -\mathcal{E} + \mathcal{E}\vartheta_k + \vartheta_{k+1}(1+ \max\{1, \mathcal{E}\}) \\
			& \leq -\mathcal{E} +\mathcal{E}\vartheta + \vartheta(1+ \max\{1, \mathcal{E}\}) \\
			&= -\mathcal{E} + \vartheta(\mathcal{E}+ (1+ \max\{1, \mathcal{E}\})).
		\end{align*}
		From \eqref{inertialCondition}, $\vartheta < \frac{\mathcal{E}}{\mathcal{E}+\max\{1, \mathcal{E}\}}$ this implies that $ \kappa :=\mathcal{E} - \vartheta(\mathcal{E} + (1+ \max\{1, \mathcal{E}\}))> 0 $. Hence, from \eqref{eq3.81}, we observe
		\begin{equation}\label{eq3.9} 
			\varphi_{k+1} - \varphi_k  \leq -\kappa\|u_{k+1} - u_k\|^2 \leq 0.
		\end{equation}  
		This implies that the sequence $ \{\varphi_k\} $ is nonincreasing. Since, we have  
		\begin{align*}
			\varphi_k &= \|u_k - u^*\|^2 - \vartheta_k\|u_{k-1} - u^*\|^2 + \rho_k\|u_k - u_{k-1}\|^2\\
			&\geq \|u_k - u^*\|^2 - \vartheta_k\|u_{k-1} - u^*\|^2.
		\end{align*}  
		It yields with the nonincreasing property of $ \{\varphi_k\} $
		\begin{align}\label{eq3.10}
			\|u_k - u^*\|^2 &\leq \vartheta_k\|u_{k-1} - u^*\|^2 + \varphi_k\notag\\
			&\leq \vartheta\|u_{k-1} - u^*\|^2 + \varphi_1\notag\\
			&\,\,\,\vdots\notag\\
			&\leq \vartheta^{k}\|u_0 - u^*\|^2 + \varphi_1(\vartheta_{k-1}+\cdots+1)\notag\\
			&\leq \vartheta^{k}\|u_0 - u^*\|^2 + \frac{\varphi_1}{1-\vartheta}.
		\end{align}    
		Further, we deduce that  
		$$
		\varphi_{k+1} = \|u_{k+1}- u^*\|^2 - \vartheta_{k+1}\|u_{k} - u^*\|^2 + \rho_{k+1}\|u_{k+1} - u_{k}\|^2
		\geq -\vartheta_{k+1}\|u_k - u^*\|^2.
		$$
		Using $\vartheta_{k+1} \leq \vartheta$ and  \eqref{eq3.10}, we get
		$$
		-\varphi_{k+1} \leq \vartheta_{k+1}\|u_k - u^*\|^2 \leq \vartheta\|u_k - u^*\|^2 \leq \vartheta^{k+1}\|u_0 - u^*\|^2 + \frac{\vartheta\varphi_1}{1 - \vartheta}.
		$$
		Since $ \{\varphi_k\} $ is nonincreasing and $\vartheta \in (0,\,1)$ then this follows from \eqref{eq3.9}
		\begin{align*}
			\kappa\sum_{k=1}^\infty \|u_{k+1} - u_k\|^2 \leq \varphi_1 - \varphi_{k+1} &\leq \vartheta^{n+1}\|u_0 - u^*\|^2 + \frac{\varphi_1}{1 - \vartheta}\\
			& \leq \|u_0 - u^*\|^2 + \frac{\varphi_1}{1 - \vartheta}.
		\end{align*}
		Therefore, we infer that
		\begin{equation}\label{eq3.8}
			\sum_{k=1}^\infty \|u_{k+1} - u_k\|^2 \leq \frac{1}{\kappa} \left(\|u_0 - u^*\|^2 + \frac{\varphi_1}{1 - \vartheta}\right)< +\infty.	
		\end{equation}
		This implies that
		\begin{equation}\label{eq3.11}
			\lim_{k \to \infty} \|u_{k+1} - u_k\| = 0.  
		\end{equation}
		On the other hand
		\begin{align*}
			\|u_{k+1} - w_k\|^2 &= \|u_{k+1} -u_k - \vartheta_k(u_k - u_{k-1})\|^2 \\
			&= \|u_{k+1} - u_k\|^2 + \vartheta_k^2\|u_k - u_{k-1}\|^2 - 2\vartheta_k\langle u_{k+1} - u_k, u_k - u_{k-1}\rangle.
		\end{align*}
		This together with \eqref{eq3.11}, we obtain
		\begin{equation}\label{eq3.111}
			\lim_{k \to \infty}\|u_{k+1} - w_k\| = 0.
		\end{equation}
		Again by \eqref{eq3.5}, we have
		\begin{align}
			\|u_{k+1} - u^*\|^2 &\leq (1 + \vartheta_k)\|u_k - u^*\|^2 - \vartheta_k\|u_{k-1} - u^*\|^2 + \vartheta_k(1 + \vartheta_k)\|u_k - u_{k-1}\|^2\notag\\
			&\quad - \mathcal{E}\|u_{k+1} - w_k\|^2\notag\\
			&\leq (1 + \vartheta_k)\|u_k - u^*\|^2 - \vartheta_k\|u_{k-1} - u^*\|^2 + \vartheta_k(1 + \vartheta_k)\|u_k - u_{k-1}\|^2\notag\\
			&\leq (1 + \vartheta_k)\|u_k - u^*\|^2 - \vartheta_k\|u_{k-1} - u^*\|^2 + 2\vartheta\|u_k - u_{k-1}\|^2\notag\\
			&= \|u_k - u^*\|^2 +\vartheta_k(\|u_k - u^*\|^2 - \|u_{k-1} - u^*\|^2) + 2\vartheta\|u_k - u_{k-1}\|^2\notag.
		\end{align} 
		By virtue of Lemma \ref{lm8} and \eqref{eq3.8}, there exists $l \in [0,\, \infty)$ such that
		\begin{equation}\label{eq3.121}
			\lim_{k \to \infty} \|u_k - u^*\|^2 := l.
		\end{equation} 
		Moreover, by \eqref{eq3.4}, we obtain  
		$$
		\lim_{k \to \infty} \|w_k - u^*\|^2 = l.
		$$  
		Thus the sequences $\{w_k\}, \{v_k\} $ and $\{u_k\} $ are bounded. \\
		\\
		Moreover, from \eqref{eq3.11} and \eqref{eq3.111}, we know
		$$
		\lim_{k \to \infty}\|u_k - w_k\| = \lim_{k \to \infty}\|u_k - u_{k+1}\| + \lim_{k \to \infty}\|u_{k+1} - w_k\| = 0.
		$$
		From \eqref{eq3.2}, we have 
		\begin{align}\label{eq3.13}
			\gamma(2-\gamma)\frac{(1 - \sigma)^2}{(1 + \sigma)^2}\|w_k - v_k\|^2 \leq  \|w_k - u^*\|^2 -\|u_{k+1} - u^*\|^2.
		\end{align}
		This implies that
		\begin{equation}\label{3.10}
			\lim_{k \to \infty}\|w_k - v_k\|=0.	
		\end{equation}
		Since $ \{u_k\} $ is bounded, then there exists a subsequence $ \{u_{k_j}\} $ of $ \{u_k\} $ such that $ u_{k_j} \rightharpoonup u^* \in \mathcal{H} $.  
		From $ \|u_k - w_k\| \to 0 $, we have $ w_{k_j} \rightharpoonup u^* $. By \eqref{3.10} and Lemma \ref{lm4}, we get $u^* \in \Omega_{\mathcal{A}+\mathcal{B}} $.
		Hence, by Lemma \ref{lm5}, the sequence $ \{u_k\} $ converges weakly to $ u^* \in \Omega_{\mathcal{A}+\mathcal{B}} $.
		
	\end{proof}
	\begin{remark}
		Compare with \cite{YAS24,WLC24,ZW18}, Theorem \ref{th1} is a more relaxed version due to its independency over Lipschitz continuity of the single-valued operator $\mathcal{B}$.
	\end{remark}
	
	The following result shows that Algorithm \ref{A:1} converges weakly with the non-asymptotic $\mathcal{O}(1/\sqrt{k})$ convergence rate.
	
	\begin{theorem}\label{th2}
		Assume that Assumptions {\rm A1}--{\rm A4} hold. Then the sequence $\{w_k\}$ generated by Algorithm \ref{A:1} converges weakly to a point in $\Omega_{\mathcal{A}+\mathcal{B}}$ with
		$$
		\min_{1 \leq j \leq k}\|u_j-v_j\| = \mathcal{O}\left(\frac{1}{\sqrt{k}}\right), \quad \forall k \in \mathbb{N}.
		$$
	\end{theorem}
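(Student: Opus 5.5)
Weak convergence of $\{w_k\}$ follows directly from Theorem \ref{th1}. There $u_k \rightharpoonup u^*\in\Omega_{\mathcal{A}+\mathcal{B}}$ and $\|u_k-w_k\|\to 0$, so $w_k = u_k + \vartheta_k(u_k-u_{k-1})$ has the same weak limit. The real content is therefore the rate estimate. Note that the statement writes $\|u_j-v_j\|$. My plan is to bound $\|w_j-v_j\|$ first and then pass to $\|u_j-v_j\|$ through $\|u_j-w_j\| = \vartheta_j\|u_j-u_{j-1}\|$.

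\textbf{Step 1: a summable sequence.} From \eqref{eq3.2} and \eqref{eq3.4},
\begin{equation*}
\gamma(2-\gamma)\tfrac{(1-\sigma)^2}{(1+\sigma)^2}\|w_k-v_k\|^2 \le \|w_k-u^*\|^2-\|u_{k+1}-u^*\|^2 .
\end{equation*}
Since $\vartheta_k$ is nondecreasing, the right-hand side equals $(1+\vartheta_k)\|u_k-u^*\|^2-\vartheta_k\|u_{k-1}-u^*\|^2+\vartheta_k(1+\vartheta_k)\|u_k-u_{k-1}\|^2-\|u_{k+1}-u^*\|^2$. This can be bounded by $\varphi_k-\varphi_{k+1}+C\|u_k-u_{k-1}\|^2+C\|u_{k+1}-u_k\|^2$, with $\varphi_k$ the Lyapunov sequence from the proof of Theorem \ref{th1} and $C$ depending only on $\vartheta,\mathcal{E}$. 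A cruder route also works. By \eqref{eq3.7} the negative term $-\mathcal{E}\|u_{k+1}-w_k\|^2$ was simply discarded, so we may instead keep it. Then use \eqref{3.7}, $\|u_{k+1}-w_k\|\le\gamma\frac{1+\sigma}{1-\sigma}\|w_k-v_k\|$, together with the triangle inequality:
\begin{equation*}
\|w_k-v_k\| \le \|w_k-u_{k+1}\| + \|u_{k+1}-v_k\| .
\end{equation*}
Either way, the goal is
\begin{equation*}
\|w_k-v_k\|^2\le M\big(\varphi_k-\varphi_{k+1}\big)+M\big(\|u_k-u_{k-1}\|^2+\|u_{k+1}-u_k\|^2\big).
\end{equation*}

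\textbf{Step 2: summing.} Sum this inequality over $k=1,\dots,K$. The $\varphi$-terms telescope to $\varphi_1-\varphi_{K+1}$. By the proof of Theorem \ref{th1}, $-\varphi_{K+1}\le \|u_0-u^*\|^2+\varphi_1/(1-\vartheta)$ uniformly in $K$. The sums of $\|u_{k+1}-u_k\|^2$ are bounded by \eqref{eq3.8}. Hence $\sum_{k=1}^\infty\|w_k-v_k\|^2\le D<\infty$, with $D$ explicit in $\|u_0-u^*\|$, $\varphi_1$, $\kappa$, $\vartheta$, $\sigma$, $\gamma$. Since $\|u_k-w_k\|^2\le\vartheta^2\|u_k-u_{k-1}\|^2$ is also summable by \eqref{eq3.8}, we get $\sum_k\|u_k-v_k\|^2\le 2\sum_k(\|u_k-w_k\|^2+\|w_k-v_k\|^2)=:D'<\infty$.

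\textbf{Step 3: the rate.} Finally,
\begin{equation*}
k\min_{1\le j\le k}\|u_j-v_j\|^2\le\sum_{j=1}^k\|u_j-v_j\|^2\le D' .
\end{equation*}
So $\min_{1\le j\le k}\|u_j-v_j\|\le\sqrt{D'/k}$, which is $\mathcal{O}(1/\sqrt{k})$.

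\textbf{Main obstacle.} The delicate point is Step 1: controlling $\|w_k-u^*\|^2-\|u_{k+1}-u^*\|^2$ by a telescoping quantity despite the inertial term. If matching $\varphi_k$ exactly is messy, I would use the alternative inequality
\begin{equation*}
\|w_k-u^*\|^2-\|u_{k+1}-u^*\|^2 \le \|u_k-u^*\|^2-\|u_{k+1}-u^*\|^2+\vartheta_k\big(\|u_k-u^*\|^2-\|u_{k-1}-u^*\|^2\big)+2\vartheta\|u_k-u_{k-1}\|^2 .
\end{equation*}
The first difference telescopes to a bounded quantity. For the second, invoke Lemma \ref{lm8}(a), which gives $\sum_k[\|u_k-u^*\|^2-\|u_{k-1}-u^*\|^2]_+<\infty$. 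Since $\vartheta_k\le\vartheta$, these terms are summable as well.
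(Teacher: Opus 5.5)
Your proof is correct and is essentially the paper's: the fallback in your last paragraph (telescoping $\|u_k-u^*\|^2-\|u_{k+1}-u^*\|^2$, summable positive parts of $\|u_k-u^*\|^2-\|u_{k-1}-u^*\|^2$, and \eqref{eq3.8}) is exactly the paper's argument, except that the paper re-derives Lemma~\ref{lm8}(a) by unrolling $\Gamma_{k+1}\le\vartheta_k\Gamma_k+\varsigma_k$; your $\varphi_k$ variant is equally valid, because the leftover $(\vartheta_k-\vartheta_{k+1})\|u_k-u^*\|^2-\mathcal{E}\vartheta_k(1-\vartheta_k)\|u_k-u_{k-1}\|^2+\rho_{k+1}\|u_{k+1}-u_k\|^2$ is at most $\rho_{k+1}\|u_{k+1}-u_k\|^2$. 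Keep your final passage from $\|w_j-v_j\|$ to $\|u_j-v_j\|$, which the paper silently skips (it bounds $\sum_j\|w_j-v_j\|^2$ and then writes $u_j$), but drop the ``cruder route'', since \eqref{3.7} bounds $\|u_{k+1}-w_k\|$ above by $\|w_k-v_k\|$ rather than the reverse; the reverse bound $\|u_{k+1}-w_k\|=\gamma\langle w_k-v_k,\phi(w_k,v_k)\rangle/\|\phi(w_k,v_k)\|\ge\gamma\frac{1-\sigma}{1+\sigma}\|w_k-v_k\|$ does hold by Lemma~\ref{lm2}, and together with $\|u_{k+1}-w_k\|\le\|u_{k+1}-u_k\|+\vartheta\|u_k-u_{k-1}\|$ it would give $\sum_k\|w_k-v_k\|^2<\infty$ straight from \eqref{eq3.8}.
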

	\begin{proof}
		From the inequalities \eqref{eq3.2} and \eqref{eq3.4}, we have
		\begin{multline}\label{eq3.91}
			\|u_{k+1} - u^*\|^2 \leq (1 + \vartheta_k)\|u_k - u^*\|^2 - \vartheta_k\|u_{k-1} - u^*\|^2 + \vartheta_k(1 + \vartheta_k)\|u_k - u_{k-1}\|^2\\
			- \zeta\|w_k - v_k\|^2.
		\end{multline}
		where $\zeta = \frac{2-\gamma}{\gamma} \frac{(1 - \sigma)^2}{(1 + \sigma)^2}$. Then \eqref{eq3.91} can be written as
		\begin{align*}
			\zeta\|w_k - v_k\|^2 &\leq  (1 + \vartheta_k)\|u_k - u^*\|^2 - \vartheta_k\|u_{k-1} - u^*\|^2 + \vartheta_k(1 + \vartheta_k)\|u_k - u_{k-1}\|^2\\
			&\qquad - \|u_{k+1} - u^*\|^2\\
			&= \|u_k - u^*\|^2 - \|u_{k+1} - u^*\|^2 + \vartheta_k(\|u_k - u^*\|^2 - \|u_{k-1} - u^*\|^2)\\
			&\qquad + \vartheta_k(1 + \vartheta_k)\|u_k - u_{k-1}\|^2.
		\end{align*}
		Let $\xi_k = \|u_k - u^*\|^2$, $\Gamma_k = \xi_k - \xi_{k-1}$ and $\varsigma_k = \vartheta_k(1 + \vartheta_k) \|u_k - u_{k-1}\|^2$, then by using $0 \leq \vartheta_k \leq \vartheta $, above inequality reduces to
		\begin{align}\label{eq3.12}
			\zeta \|w_k - v_k\|^2 &\leq \xi_k - \xi_{k+1} + \vartheta_k \Gamma_k + \varsigma_k \notag\\
			&\leq \xi_k - \xi_{k+1} + \vartheta_k |\Gamma_k| + \varsigma_k \notag\\
			&\leq \xi_k - \xi_{k+1} + \vartheta |\Gamma_k| + \varsigma_k.
		\end{align}
		In view of \eqref{eq3.8}, we have
		$
		\sum\limits_{k=1}^\infty \|u_{k+1} - u_k\|^2 < \infty.
		$
		Assume a positive constant $\mathcal{M}$ such that
		$
		\sum\limits_{k=1}^\infty \|u_{k+1} - u_k\|^2 \leq \mathcal{M}.
		$
		Then, we observe that
		\begin{align}\label{eq3.22}
			\sum_{k=1}^\infty \varsigma_k &= \sum_{k=1}^\infty \vartheta_k(1 + \vartheta_k) \|u_k - u_{k-1}\|^2 \notag\\
			&\leq \vartheta(1 + \vartheta) \sum_{k=1}^\infty \|u_k - u_{k-1}\|^2 \notag\\
			&\leq \vartheta(1 + \vartheta) \mathcal{M} := \mathcal{M}_1.
		\end{align}
		Owing to \eqref{eq3.91} with the definition of $\Gamma_k$, we get
		\begin{equation}
			\Gamma_{k+1} \leq \vartheta_k \Gamma_k + \varsigma_k \leq \vartheta_k |\Gamma_k| + \varsigma_k.
		\end{equation}
		Thus, it yields
		\begin{align}\label{eq3.1011}
			|\Gamma_{k+1}| &\leq \vartheta |\Gamma_k| + \varsigma_k\notag\\
			&\leq \vartheta^2|\Gamma_{k-1}| + \vartheta\varsigma_{k-1} + \varsigma_k\notag\\
			&\,\,\,\vdots\notag\\
			&\leq \vartheta^{k}|\Gamma_1|+ \vartheta^{k-1}\varsigma_{1}+\cdots+\vartheta\varsigma_{k-1} + \varsigma_k\notag\\
			&= \vartheta^{k}|\Gamma_1| + \sum_{j=1}^{k}\vartheta^{k-j}\varsigma_{j}.
		\end{align}    
		It follows with \eqref{eq3.22}
		\begin{align*}
			\sum_{k=1}^\infty |\Gamma_{k+1}| &\leq \vartheta(1 + \vartheta + \cdots)|\Gamma_1| + (1 + \vartheta + \cdots)\sum_{k=1}^{\infty}\varsigma_k \\
			&= \frac{\vartheta}{1 - \vartheta} |\Gamma_1| + \frac{1}{1 - \vartheta}\sum_{k=1}^{\infty}\varsigma_k\\
			&	\leq \frac{\vartheta}{1 - \vartheta} |\Gamma_1| + \frac{1}{1 - \vartheta}\mathcal{M}_1.
		\end{align*}
		Then from \eqref{eq3.12}, we obtain
		\begin{align*}
			\zeta \sum_{j=1}^{k} \|u_j - v_j\|^2 
			&\leq \xi_1 - \xi_{k+1} + \vartheta \sum_{j=1}^k |\Gamma_k| + \sum_{j=1}^k \tau_j \\
			&\leq \xi_1 + \vartheta(|\Gamma_1| + \sum_{j=1}^k |\Gamma_{j+1}|)+ \sum_{j=1}^k \tau_j\\
			&\leq \xi_1 +\vartheta|\Gamma_1| + \frac{\vartheta^2}{1 - \vartheta} |\Gamma_1| + \frac{\vartheta \mathcal{M}_1}{1 - \vartheta} + \mathcal{M}_1 \\
			&=\xi_1 + \frac{\vartheta}{1 - \vartheta} |\Gamma_1| + \frac{\mathcal{M}_1}{1 - \vartheta}.
		\end{align*}
		This implies that
		\begin{equation*}
			\sum_{j=1}^k \|u_j - v_j\|^2 \leq \left( \|u_1 - u^*\|^2 + \frac{\vartheta}{1 - \vartheta}|\|u_1 - u^*\|^2 - \|u_0 - u^*\|^2| + \frac{\mathcal{M}_1}{1 - \vartheta} \right) \frac{1}{\zeta},
		\end{equation*}
		and so,
		\begin{equation*}
			\min_{1 \leq j \leq k} \|u_j - v_j\|^2 \leq 
			\frac{ \|u_1 - u^*\|^2 + \frac{\vartheta}{1 - \vartheta} |\|u_1 - u^*\|^2 - \|u_0 - u^*\|^2| + \frac{\mathcal{M}_1}{1 - \vartheta} }{k\zeta} ,
		\end{equation*}
		and hence,
		\begin{equation*}
			\min_{1 \leq j \leq k} \|u_j - v_j\| \leq 
			\left(\frac{ \|u_1 - u^*\|^2 + \frac{\vartheta}{1 - \vartheta} |\|u_1 - u^*\|^2 - \|u_0 - u^*\|^2| + \frac{\mathcal{M}_1}{1 - \vartheta}}{k\zeta} \right)^{1/2}.
		\end{equation*}
		By Lemma \ref{lm4}, we have $v_k = w_k$ implies that $v_k$ is a solution of MVIP. This means that the error bound presented in Theorem \ref{th2} effectively characterizes the convergence rate of Algorithm \ref{A:1}.
	\end{proof}


\section{Strong convergence}\label{S:4}

\noindent In this section, we study the strong convergence and its linear convergence of the Algorithm \ref{A:1}. To this end, consider the following assumption.
\begin{assumption}\label{Assump:2}
	{\rm(B1)} The set-valued operator $\mathcal{A} : \mathcal{H} \rightrightarrows \mathcal{H}$ is maximal and strongly monotone.
	\begin{itemize}
		\item[(B2)] The single-valued operator $\mathcal{B} : \mathcal{H} \to \mathcal{H}$ is monotone and continuous.
	\end{itemize}
\end{assumption}
\begin{lemma}\label{lm3}
	Suppose that Assumptions {\rm A1}, {\rm A4} and {\rm \ref{Assump:2}} hold.  Let $ \{u_k\}, \{v_k\} $ and $ \{w_k\} $ be sequences generated by Algorithm {\rm\ref{A:1}}. Then, for any $ u^* \in \Omega_{\mathcal{A}+\mathcal{B}} $, we have
	\begin{align*}
		\|u_{k+1} - u^*\|^2 &\leq \|w_k - u^*\|^2 -\mathcal{E}\|u_{k+1} - w_k\|^2-2\mathcal{Q}\|v_k - u^*\|^2,
	\end{align*}
	where $\mathcal{E} = \frac{2-\gamma}{\gamma} \frac{(1 - \sigma)^4}{(1 + \sigma)^4}$ and $\mathcal{Q}= \gamma \lambda_{\min} \beta\frac{(1-\sigma)^2}{(1+\sigma)^2}$.
\end{lemma}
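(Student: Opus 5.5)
The plan is to redo the proof of Lemma \ref{lm2}, this time keeping the extra term that strong monotonicity of $\mathcal{A}$ supplies. Expanding the update $u_{k+1}=w_k-\gamma\delta_k\phi(w_k,v_k)$ as in \eqref{3.3} gives
\[
\|u_{k+1}-u^*\|^2=\|w_k-u^*\|^2-2\gamma\delta_k\langle w_k-u^*,\phi(w_k,v_k)\rangle+\gamma^2\delta_k^2\|\phi(w_k,v_k)\|^2 .
\]
As in \eqref{15}, we split $\langle w_k-u^*,\phi\rangle=\langle w_k-v_k,\phi\rangle+\langle v_k-u^*,\phi\rangle$.

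To bound the second piece, take $q_k\in\mathcal{A}v_k$ from \eqref{16} and $-\mathcal{B}u^*\in\mathcal{A}u^*$. Strong monotonicity of $\mathcal{A}$ with modulus $\beta$ gives
\[
\langle q_k+\mathcal{B}u^*,\,v_k-u^*\rangle\ge\beta\|v_k-u^*\|^2 .
\]
Monotonicity of $\mathcal{B}$ lets us replace $\mathcal{B}u^*$ by $\mathcal{B}v_k$ while keeping the inequality, so $\langle q_k+\mathcal{B}v_k,\,v_k-u^*\rangle\ge\beta\|v_k-u^*\|^2$. Multiplying by $\lambda_k$ and noting that $\lambda_k(q_k+\mathcal{B}v_k)=\phi(w_k,v_k)$, we obtain the strengthened form of \eqref{17}:
\[
\langle v_k-u^*,\phi(w_k,v_k)\rangle\ge\lambda_k\beta\|v_k-u^*\|^2\ge\lambda_{\min}\beta\|v_k-u^*\|^2,
\]
where the last step uses Proposition \ref{pr1}. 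Substituting back and using the definition \eqref{3.2} of $\delta_k$,
\[
\|u_{k+1}-u^*\|^2\le\|w_k-u^*\|^2-\gamma(2-\gamma)\frac{\langle w_k-v_k,\phi\rangle^2}{\|\phi\|^2}-2\gamma\delta_k\lambda_{\min}\beta\|v_k-u^*\|^2 .
\]

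Two substitutions then finish the argument. For the middle term, follow the chain in Theorem \ref{th1} from \eqref{eq3.2} to \eqref{eq3.3}. Lemma \ref{lm2} gives the lower bound $\frac{1-\sigma}{1+\sigma}\|w_k-v_k\|$ on the ratio, and \eqref{3.7} bounds $\|w_k-v_k\|$ below by a multiple of $\|u_{k+1}-w_k\|$. Together these turn the middle term into $-\mathcal{E}\|u_{k+1}-w_k\|^2$. For the last term, use the lower bound $\delta_k\ge\frac{1-\sigma}{(1+\sigma)^2}$ from \eqref{3.8}, which yields a coefficient $2\gamma\lambda_{\min}\beta\frac{1-\sigma}{(1+\sigma)^2}$. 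Since $\frac{1-\sigma}{(1+\sigma)^2}\ge\frac{(1-\sigma)^2}{(1+\sigma)^2}$ for $\sigma\in(0,1)$, this coefficient is at least $2\mathcal{Q}$, and the claimed inequality follows.

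The main point needing care is the strong-monotonicity step: we must make sure the inner product involving $\mathcal{B}$ goes in the favourable direction. Writing $\langle\mathcal{B}v_k-\mathcal{B}u^*,\,v_k-u^*\rangle\ge0$ confirms that it does. The remaining work is bookkeeping of the constants from \eqref{3.8} and \eqref{3.7}.
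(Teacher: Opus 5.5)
Your proposal is correct and follows the paper's proof: you apply strong monotonicity of $\mathcal{A}$ to $q_k\in\mathcal{A}v_k$ and $-\mathcal{B}u^*\in\mathcal{A}u^*$, and combined with monotonicity of $\mathcal{B}$ this gives the strengthened form of \eqref{17}, which you substitute into \eqref{3.3} and simplify via \eqref{3.2}. Your final bookkeeping uses \eqref{3.7}, \eqref{3.8} and $\frac{1-\sigma}{(1+\sigma)^2}\ge\frac{(1-\sigma)^2}{(1+\sigma)^2}$ to reach the stated constants $\mathcal{E}$ and $\mathcal{Q}$; this is more explicit than the paper, whose written proof stops at the $\|w_k-v_k\|^2$ and $\delta_k\lambda_k\beta$ form.
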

\begin{proof} 	Since 
	\[
	v_k = (I + \lambda_k \mathcal{A})^{-1}(I - \lambda_k \mathcal{B})w_k.
	\]
	This implies that 
	\[
	(I - \lambda_k \mathcal{B})w_k \in (I + \lambda_k \mathcal{A})v_k,
	\]
	hence,
	\[
	w_k - v_k - \lambda_k \mathcal{B} w_k \in \lambda_k \mathcal{A} v_k.
	\]
	On the other hand
	\[
	-\lambda_k \mathcal{B} u^* \in \lambda_k \mathcal{A} u^*.
	\]
	Since the operator \( \mathcal{A}\) is $\beta$-strongly monotone, then we have
	\[
	\langle w_k - v_k - \lambda_k \mathcal{B} w_k + \lambda_k \mathcal{B} u^*, v_k - u^* \rangle \geq \lambda_k \beta \|v_k - u^*\|^2.
	\]
	By using the monotonicity of $\mathcal{B}$, this implies that 
	\begin{align*}
		\langle w_k - v_k - \lambda_k (\mathcal{B} w_k - \mathcal{B} v_k), v_k - u^* \rangle 
		&\geq \lambda_k \beta \|v_k - u^*\|^2 + \lambda_k \langle \mathcal{B} v_k - \mathcal{B}u^*, v_k - u^* \rangle \\
		&\geq \lambda_k \beta \|v_k - u^*\|^2.
	\end{align*}
	Now, from \eqref{15}, we have
	\begin{align*}
		\langle w_k - u^*, \phi(w_k, v_k)
		&= \langle w_k - v_k, \phi(w_k, v_k) \rangle + \langle v_k - u^*, w_k - v_k - \lambda_k (\mathcal{B} w_k - \mathcal{B} v_k) \rangle\\
		&\geq \langle w_k - v_k, \phi(w_k, v_k) \rangle + \lambda_k \beta \|v_k - u^*\|^2.
	\end{align*}	
	This together with \eqref{3.3}, we obtain
	\begin{align*}
		\|u_{k+1} - u^*\|^2 &\leq 	\|w_k - u^*\|^2 - 2 \gamma \delta_k \langle w_k - u^*, \phi(w_k, v_k) \rangle + \gamma^2 \delta_{k}^2 \|\phi(w_k, v_k)\|^2\\
		& \leq \|w_k - u^*\|^2 - 2 \gamma \delta_k \langle w_k - v_k, \phi(w_k, v_k) \rangle - 2 \gamma \delta_k \lambda_k \beta \|v_k - u^*\|^2 + \gamma^2 \delta_{k}^2 \|\phi(w_k, v_k)\|^2.
	\end{align*}
	Using \eqref{3.2}, this yields that
	\begin{multline*}
		\|u_{k+1} - u^*\|^2 \leq \|w_k - u^*\|^2 -  \gamma (2-\gamma) \frac{\langle w_k - v_k, \phi(w_k, v_k) \rangle^2}{\|\phi(w_k,v_k)\|^2} - 2 \gamma \delta_k \lambda_k \beta \|v_k - u^*\|^2.
	\end{multline*}
	Similarly, by the same argument as in Lemma \ref{lm2}, we use \eqref{3.4} to obtain desired result
	$$
	\|u_{k+1} - u^*\|^2 \leq \|w_k - u^*\|^2 -  \gamma (2-\gamma) \frac{(1 - \sigma)^2}{(1 + \sigma)^2}\|w_k - v_k\|^2 - 2 \gamma \delta_k \lambda_k \beta \|v_k - u^*\|^2.
	$$ 
	
\end{proof}

\begin{theorem}\label{th3}Assume that Assumptions {\rm A1}, {\rm A4} and {\rm \ref{Assump:2}} hold. Let  $\beta < 1/\gamma\lambda_{\min}$ then the sequence $ \{u_k\} $ generated by Algorithm \ref{A:1} converges strongly to $u^* \in \Omega_{\mathcal{A}+\mathcal{B}} $.  
\end{theorem}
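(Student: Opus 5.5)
Starting from Lemma \ref{lm3}, I will turn the extra term $-2\mathcal{Q}\|v_k-u^*\|^2$ into a contraction factor on $\|w_k-u^*\|^2$. Lemma \ref{lm3} gives
\[
\|u_{k+1}-u^*\|^2 \le \|w_k-u^*\|^2-\mathcal{E}\|u_{k+1}-w_k\|^2-2\mathcal{Q}\|v_k-u^*\|^2 ,
\]
where $\mathcal{Q}=\gamma\lambda_{\min}\beta\frac{(1-\sigma)^2}{(1+\sigma)^2}$ comes from the bounds $\delta_k\ge\frac{1-\sigma}{(1+\sigma)^2}$ in \eqref{3.8} and $\lambda_k\ge\lambda_{\min}$ in Proposition \ref{pr1}.

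To compare $\|v_k-u^*\|$ with $\|w_k-u^*\|$, I use $\|w_k-u^*\|^2\le 2\|v_k-u^*\|^2+2\|w_k-v_k\|^2$, i.e.
\[
-2\mathcal{Q}\|v_k-u^*\|^2\le -\mathcal{Q}\|w_k-u^*\|^2+2\mathcal{Q}\|w_k-v_k\|^2 .
\]
The positive term $2\mathcal{Q}\|w_k-v_k\|^2$ has to be absorbed by the descent term. Lemma \ref{lm3} is stated with $\mathcal{E}\|u_{k+1}-w_k\|^2$, but its proof actually yields $\gamma(2-\gamma)\frac{(1-\sigma)^2}{(1+\sigma)^2}\|w_k-v_k\|^2$, which dominates it by \eqref{3.7}. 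So I keep the $\|w_k-v_k\|^2$ version, spending part of it on $2\mathcal{Q}\|w_k-v_k\|^2$ and converting the remainder to a multiple of $\|u_{k+1}-w_k\|^2$. If the constants do not cooperate, the fallback is to use the weak convergence result Theorem \ref{th1}, whose proof shows $\|w_k-v_k\|\to0$ (valid here since strong monotonicity of $\mathcal{A}$ implies monotonicity). The term $2\mathcal{Q}\|w_k-v_k\|^2$ then becomes a vanishing perturbation $\beta_k$. In either case I get, for some $\theta:=1-\mathcal{Q}$,
\[
\|u_{k+1}-u^*\|^2\le \theta\|w_k-u^*\|^2+\beta_k,\qquad \beta_k\to 0 .
\]
The hypothesis $\beta<1/(\gamma\lambda_{\min})$ gives $\mathcal{Q}<\frac{(1-\sigma)^2}{(1+\sigma)^2}<1$, hence $\theta\in(0,1)$.

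Next I expand $\|w_k-u^*\|^2$ by \eqref{eq3.4} and bound it by $(1+\vartheta_k)\|u_k-u^*\|^2+\vartheta_k(1+\vartheta_k)\|u_k-u_{k-1}\|^2$, dropping the negative $-\vartheta_k\|u_{k-1}-u^*\|^2$. The inertial factor $(1+\vartheta_k)$ multiplying $\|u_k-u^*\|^2$ would break the contraction, so I use instead $\|w_k-u^*\|\le\|u_k-u^*\|+\vartheta\|u_k-u_{k-1}\|$, which gives
\[
\|w_k-u^*\|^2\le\|u_k-u^*\|^2+\vartheta\|u_k-u_{k-1}\|\bigl(2\|u_k-u^*\|+\vartheta\|u_k-u_{k-1}\|\bigr).
\]
Since $\{u_k\}$ is bounded and $\|u_k-u_{k-1}\|\to0$ (established in the proof of Theorem \ref{th1}, using (A4)), the cross term tends to $0$. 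This yields
\[
\|u_{k+1}-u^*\|^2\le\theta\|u_k-u^*\|^2+\tilde\beta_k,\qquad \tilde\beta_k\to0 .
\]

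Finally, Lemma \ref{lm10} gives $\|u_k-u^*\|\to0$. Because the inequality holds for every $u^*\in\Omega_{\mathcal{A}+\mathcal{B}}$, the limit is unique, and this also shows the solution set is a singleton, as strong monotonicity forces.

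The main obstacle is the second step: controlling $\|v_k-u^*\|$ from below by $\|w_k-u^*\|$ with constants that leave a genuine contraction. The hypothesis on $\beta$ appears designed precisely to make $\theta=1-\mathcal{Q}\in(0,1)$. I will first try the absorption argument, and fall back on the vanishing-perturbation argument via Theorem \ref{th1} if the constants do not fit.
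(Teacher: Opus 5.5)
Your proposal is correct and is essentially the paper's proof. The paper likewise uses the $\|w_k-v_k\|^2$ form of Lemma~\ref{lm3} with $\delta_k\ge\frac{1-\sigma}{(1+\sigma)^2}$ and $\lambda_k\ge\lambda_{\min}$, bounds $\|w_k-u^*\|^2\le\|u_k-u^*\|^2+\vartheta_k^2\|u_k-u_{k-1}\|^2+2\vartheta_k\|u_k-u^*\|\,\|u_k-u_{k-1}\|$, and lower-bounds $\|v_k-u^*\|^2$; the only difference is that it goes directly to $\tfrac12\|u_k-u^*\|^2-2\|v_k-w_k\|^2-2\|w_k-u_k\|^2$ rather than through $w_k$ as you do. It then closes with Lemma~\ref{lm10}, with the same contraction factor $1-\mathcal{Q}$ and the vanishing quantities taken from the proof of Theorem~\ref{th1}. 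Only your fallback (treating $2\mathcal{Q}\|w_k-v_k\|^2$ as a vanishing perturbation) is needed, and that is exactly the paper's route; with your factor-$2$ splitting, full absorption would require $2-\gamma\ge 2\lambda_{\min}\beta$, which $\gamma\lambda_{\min}\beta<1$ does not guarantee.
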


\begin{proof}  Indeed, 	from Lemma \ref{lm3}, we have
	$$
	\|u_{k+1} - u^*\|^2 \leq \|w_k - u^*\|^2 -  \gamma (2-\gamma) \frac{(1 - \sigma)^2}{(1 + \sigma)^2}\|w_k - v_k\|^2 - 2 \gamma \delta_k \lambda_k \beta \|v_k - u^*\|^2.
	$$ 
	According to Proposition \ref{pr1}, there exists $\lambda_{\min} > 0$ such that $\lambda_k \geq \lambda_{\min}, \forall n\in \mathbb{N}$.
	Then this together with \eqref{3.8}, we get
	\begin{align*}
		\|u_{k+1} - u^*\|^2 \leq \|w_k - u^*\|^2 -  \gamma (2-\gamma) \frac{(1 - \sigma)^2}{(1 + \sigma)^2}\|w_k - v_k\|^2 - 2 \gamma \lambda_{\min} \beta \frac{(1-\sigma)}{(1+\sigma)^2}\|v_k - u^*\|^2.
	\end{align*}
	that is,
	\begin{align}\label{eq3.14}
		\|u_{k+1} - u^*\|^2 &\leq \|w_k - u^*\|^2 -  \gamma (2-\gamma) \frac{(1 - \sigma)^2}{(1 + \sigma)^2}\|w_k - v_k\|^2 - 2 \gamma \lambda_{\min} \beta \frac{(1-\sigma)^2}{(1+\sigma)^2}\|v_k - u^*\|^2.
	\end{align}
	Thanks to the definition of $w_k$, we have
	\begin{align}\label{eq3.15}
		\|w_k - u^*\|^2 
		&= \|u_k + \vartheta_k(u_k - u_{k-1}) - u^*\|^2 \nonumber \\
		&\leq \|u_k - u^*\|^2 + \vartheta_k^2 \|u_k - u_{k-1}\|^2 + 2\vartheta_k \|u_k - u^*\| \cdot \|u_k - u_{k-1}\|.
	\end{align}
	Combining \eqref{eq3.14} and \eqref{eq3.15}  with the definition of $u_{k+1}$, we obtain
	\begin{align}\label{sd}
		\|u_{k+1} - u^*\|^2 &\leq \|u_k - u^*\|^2 + \vartheta_k^2 \|u_k - u_{k-1}\|^2 + 2\vartheta_k \|u_k - u^*\| \cdot \|u_k - u_{k-1}\| \nonumber \\
		&\qquad - \gamma (2-\gamma) \frac{(1 - \sigma)^2}{(1 + \sigma)^2}\|w_k - v_k\|^2 - 2 \gamma  \lambda_{\min} \beta \frac{(1-\sigma)^2}{(1+\sigma)^2}\|v_k - u^*\|^2 \nonumber \\
		&\leq  \|u_k - u^*\|^2 + \vartheta_k^2 \|u_k - u_{k-1}\|^2 + 2\vartheta_k \|u_k - u^*\| \cdot \|u_k - u_{k-1}\| \nonumber \\
		&\qquad -2 \gamma \lambda_{\min}\beta\frac{(1-\sigma)^2}{(1+\sigma)^2}\|v_k - u^*\|^2.
	\end{align}
	In addition,
	\begin{align*}
		\|u_k - u^*\|^2 &\leq 2\left(\|u_k - v_k\|^2 + \|v_k - u^*\|^2\right) \\
		&\leq 4\left(\|u_k - w_k\|^2 + \|v_k - w_k\|^2\right) + 2\|v_k - u^*\|^2,
	\end{align*}
	which implies that
	\begin{equation}\label{eq3.17}
		\|v_k - u^*\|^2 \geq \frac{1}{2} \|u_k - u^*\|^2 - 2\|v_k - w_k\|^2 - 2\|w_k - u_k\|^2.
	\end{equation}
	In view of \eqref{sd} and \eqref{eq3.17}, we infer that
	\begin{align*}
		\|u_{k+1} - u^*\|^2 &\leq  \|u_k - u^*\|^2 + \vartheta_k^2 \|u_k - u_{k-1}\|^2 + 2\vartheta_k \|u_k - u^*\| \cdot \|u_k - u_{k-1}\| \nonumber \\
		&\qquad -2 \gamma \lambda_{\min} \beta\frac{(1-\sigma)^2}{(1+\sigma)^2} \left(\frac{1}{2} \|u_k - u^*\|^2 - 2\|v_k - w_k\|^2 - 2\|w_k - u_k\|^2\right) \nonumber \\
		&= (1 -  \theta) \|u_k - u^*\|^2 + \vartheta_k^2 \|u_k - u_{k-1}\|^2+2\vartheta_k \|u_k - u^*\| \cdot \|u_k - u_{k-1}\| \nonumber \\
		&\qquad + 4\theta \|v_k - w_k\|^2 + 4\theta \|w_k - u_k\|^2,
	\end{align*}	
	where $\theta= \gamma \lambda_{\min} \beta\frac{(1-\sigma)^2}{(1+\sigma)^2}$. Since $\gamma\lambda_{\min}\beta < 1$, this implies that $\theta \leq 1$, and consequently $1 - \theta \in (0, 1)$. 	Given that $\{u_k\}$  is bounded, and using equations \eqref{eq3.11} and \eqref{3.10}, we obtain
	\[
	\lim_{k \to \infty} \|u_{n} - u_k\| = \lim_{k \to \infty} \vartheta_k\|u_k - u_{k-1}\| \leq \vartheta \lim_{k \to \infty} \|u_k - u_{k-1}\| = 0,
	\]
	owing to Lemma \ref{lm10}, we obtain the desired result
	\begin{equation*}
		\lim_{k \to \infty} \|u_k - u^*\|= 0.
	\end{equation*}
\end{proof}
\begin{remark}
	Theorem \ref{th3} represents one of the few strong convergence results available in the literature compare to \cite{YAS24,WLC24,TC21,TC19,TRCL24}, which does not require Lipschitz continuity of the single-valued operator $\mathcal{B}$.
\end{remark}

Further, we present the linear convergence rate of the proposed Algorithm \ref{A:1} with the following assumptions.
\begin{assumption}\label{Assump:3}
	{\rm(C1)} The solution set of the MVIP is nonempty, i.e., 
	$$
	\Omega_{\mathcal{A}+\mathcal{B}} := (\mathcal{A}+\mathcal{B})^{-1}(0) \neq \emptyset.
	$$
	\begin{itemize}
		\item[(C2)] The set valued operator $\mathcal{A} : \mathcal{H} \rightrightarrows \mathcal{H}$ is maximal and $\beta$-strongly monotone such that $\beta < 1/\gamma\lambda_{\min}$.
		\item[(C3)] The single valued operator $\mathcal{B} : \mathcal{H} \to \mathcal{H}$ is monotone and continuous.
		\item[(C4)] The sequence $\{\vartheta_k\}\subseteq (0,\,1)$ is non-decreasing such that
		\begin{equation}\label{inertialCondition2}
			0\leq \vartheta_k \leq \vartheta_{k+1} \leq \vartheta < \min\left\{\frac{\mathcal{E}}{\mathcal{E}+\max\{1, \mathcal{E}\}}, \frac{1-\tau}{\tau}\right\},
		\end{equation}
		where $\mathcal{E}=\frac{2-\gamma}{\gamma} \alpha^2$ and $\alpha=\frac{(1 - \sigma)^2}{(1 + \sigma)^2}$ and $\tau =1- \frac{1}{2} \alpha\min\{\gamma(2-\gamma),\, 2\gamma\lambda_{\min} \beta\}$.
	\end{itemize}
\end{assumption}
\begin{theorem}\label{th4}
	Assume that the Assumption {\rm\ref{Assump:3}} holds. Then the sequence $\{u_k\}$ generated by Algorithm \ref{A:1}
	converges to a solution $u^*\in \Omega_{\mathcal{A}+\mathcal{B}}$  with a linear convergence rate.
\end{theorem}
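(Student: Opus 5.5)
We derive a one-step contraction for a Lyapunov-type quantity that combines $\|u_k-u^*\|^2$ with the inertial correction, and then unroll it geometrically. The starting point is Lemma \ref{lm3} in the form \eqref{eq3.14}. We bound $\delta_k\lambda_k\ge \lambda_{\min}\frac{1-\sigma}{(1+\sigma)^2}\ge\lambda_{\min}\alpha$ using \eqref{3.8} and Proposition \ref{pr1}. This gives
\begin{equation*}
\|u_{k+1}-u^*\|^2\le \|w_k-u^*\|^2-\alpha\gamma(2-\gamma)\|w_k-v_k\|^2-2\alpha\gamma\lambda_{\min}\beta\|v_k-u^*\|^2 .
\end{equation*}
Set $c:=\alpha\min\{\gamma(2-\gamma),2\gamma\lambda_{\min}\beta\}$. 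The two negative terms together are at most $-c(\|w_k-v_k\|^2+\|v_k-u^*\|^2)\le -\frac c2\|w_k-u^*\|^2$, by the parallelogram-type bound $\|a+b\|^2\le 2\|a\|^2+2\|b\|^2$. Hence
\begin{equation*}
\|u_{k+1}-u^*\|^2\le \tau\|w_k-u^*\|^2,\qquad \tau=1-\tfrac c2\in(0,1),
\end{equation*}
which is exactly the $\tau$ in (C4). Note that (C2) guarantees $c/2<1$, so $\tau>0$. Here $u^*$ is the unique solution: strong monotonicity of $\mathcal A+\mathcal B$ makes $\Omega_{\mathcal A+\mathcal B}$ a singleton, and by Theorem \ref{th3} (or Theorem \ref{th1}) the sequence converges to it.

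Next we expand $\|w_k-u^*\|^2$ by the identity \eqref{eq3.4}. This yields
\begin{equation*}
\|u_{k+1}-u^*\|^2\le \tau(1+\vartheta_k)\|u_k-u^*\|^2-\tau\vartheta_k\|u_{k-1}-u^*\|^2+\tau\vartheta_k(1+\vartheta_k)\|u_k-u_{k-1}\|^2 .
\end{equation*}
The inertial difference term is handled as in the proof of Theorem \ref{th1}. We also keep the $-\mathcal E\|u_{k+1}-w_k\|^2$ term from Lemma \ref{lm3}: before applying the $\tau$-bound, we split off a fraction of the descent, and then use (III) to convert it into $-\mathcal E(1-\vartheta_k)\|u_{k+1}-u_k\|^2+\mathcal E\vartheta_k(1-\vartheta_k)\|u_k-u_{k-1}\|^2$. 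We then define
\begin{equation*}
\Phi_k:=\|u_k-u^*\|^2+\rho_k\|u_k-u_{k-1}\|^2 ,
\end{equation*}
with $\rho_k$ as in the proof of Theorem \ref{th1}, possibly scaled.

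The aim is an inequality of the form $\Phi_{k+1}\le q\,\Phi_k$ with $q:=\max\{\tau(1+\vartheta),\,\rho/\kappa'\}<1$. The condition $\vartheta<(1-\tau)/\tau$ in (C4) is precisely $\tau(1+\vartheta)<1$, which controls the $\|u_k-u^*\|^2$ coefficient; the term $-\tau\vartheta_k\|u_{k-1}-u^*\|^2$ is nonpositive and can be dropped. The condition $\vartheta<\mathcal E/(\mathcal E+\max\{1,\mathcal E\})$ gives $\kappa>0$ as in \eqref{eq3.9}, so that $\|u_{k+1}-u_k\|^2$ appears with a negative coefficient on the right. That negative coefficient absorbs $\rho_{k+1}\|u_{k+1}-u_k\|^2$ and leaves a factor below $1$ multiplying $\rho_k\|u_k-u_{k-1}\|^2$. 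Iterating gives $\|u_k-u^*\|^2\le\Phi_k\le q^{k-1}\Phi_1$, hence $\|u_k-u^*\|\le \sqrt{\Phi_1}\,(\sqrt q)^{k-1}$, which is linear convergence in the sense of Definition \ref{df1}.

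The main obstacle is the bookkeeping in this combination step. The $\tau$-contraction and the $\mathcal E$-descent both come from the same negative terms of Lemma \ref{lm3}, so we cannot use each in full. We would first try a convex split: use a fraction $t$ of the descent for the $\tau$-bound and $1-t$ for the $\mathcal E$-term, then verify that the strict inequalities in (C4) leave enough slack for some $t\in(0,1)$ so that both coefficient conditions hold with a common $q<1$. If that fails, the fallback is to drop the difference term entirely. Since $\sum\|u_k-u_{k-1}\|^2<\infty$ by \eqref{eq3.8}, this still gives a recursion $a_{k+1}\le\tau(1+\vartheta)a_k+\varepsilon_k$ with a summable perturbation, and we would then argue geometric decay by bounding $\|u_k-u_{k-1}\|^2\le 2(a_k+a_{k-1})$ and studying a two-step linear recursion whose characteristic root lies below $1$ under (C4).
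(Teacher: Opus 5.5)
Your first two steps are sound and match the paper's. The bound $\|u_{k+1}-u^*\|^2\le\tau\|w_k-u^*\|^2$ is exactly \eqref{3.12}, and expanding by \eqref{eq3.4} gives the first line of \eqref{3.13}. The gap is the step you flag yourself. You never exhibit $t$, $\rho_k$ and $q<1$ with $\Phi_{k+1}\le q\Phi_k$, and neither of your two routes can produce them from (C4).

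In the convex split, the coefficient of $\|u_k-u^*\|^2$ is $\tau_t(1+\vartheta)$ with $\tau_t=1-tc/2>\tau$. So you need $t>2\vartheta/(c(1+\vartheta))$, and this threshold tends to $1$ as $\vartheta\uparrow(1-\tau)/\tau$. That leaves only $(1-t)\mathcal E\to0$ to absorb $\|u_k-u_{k-1}\|^2$, whose coefficient stays of size about $\vartheta$. The bound $\vartheta<\mathcal E/(\mathcal E+\max\{1,\mathcal E\})$ says nothing about $(1-t)\mathcal E$. Even for the full $\mathcal E$ it does not give $\kappa>0$ in \eqref{eq3.9}, which needs $\vartheta<\mathcal E/(\mathcal E+1+\max\{1,\mathcal E\})$.

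In the fallback, write $a_k=\|u_k-u^*\|^2$, keep the term $-\tau\vartheta_k a_{k-1}$, and insert $\|u_k-u_{k-1}\|^2\le2(a_k+a_{k-1})$. This gives $a_{k+1}\le\tau(1+\vartheta)(1+2\vartheta)a_k+\tau\vartheta(1+2\vartheta)a_{k-1}$. That recursion contracts geometrically iff its coefficient sum $\tau(1+2\vartheta)^2$ is below $1$. This is strictly stronger than $\tau(1+\vartheta)<1$: for $\tau=0.9$ it means $\vartheta<0.027$ instead of $\vartheta<0.111$.

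This is not a matter of bookkeeping: the estimates you combine do not force convergence on the whole range allowed by (C4). Take $\vartheta_k\equiv0.1$, $\mathcal E=0.112$ and $\tau=1/1.101$. Both bounds in (C4) hold, and these values are realized e.g.\ by $\gamma=1$, $\sigma\approx0.267$, $\lambda_{\min}\beta\approx0.274$. Put $u^*=0$ in $\mathbb R^2\cong\mathbb C$ and $u_k=e^{ik\phi}$ with $1-\cos\phi=\frac{1/\tau-1}{2\vartheta(1+\vartheta)}$. Then $u_{k+1}=zw_k$ with $z=e^{i\phi}/(1+\vartheta-\vartheta e^{-i\phi})$, $|z|^2=\tau$ and $|z|^2+\mathcal E|1-z|^2\approx0.9925$. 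So the $\tau$-bound, the $\mathcal E$-descent, every convex mix of the two, \eqref{eq3.4} and (III) all hold. Yet $\|u_k\|=1$ for every $k$, and $\sum_k\|u_k-u_{k-1}\|^2=\infty$, consistent with $\kappa<0$ for these values.

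You therefore need one of two things:
\begin{itemize}
\item a stronger restriction on $\vartheta$: under $\sqrt\tau(1+2\vartheta)<1$, your fallback gives a linear rate, as does the direct bound $\|w_k-u^*\|\le(1+\vartheta)\|u_k-u^*\|+\vartheta\|u_{k-1}-u^*\|$; or
\item finer information from the algorithm than the two consequences of Lemma~\ref{lm3} that you use.
\end{itemize}

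You also cannot borrow the paper's way of closing this step. It unrolls \eqref{3.13} and absorbs the accumulated $\|u_j-u_{j-1}\|^2$ into the factor $\tau^k(1+\vartheta)^k$ using $\vartheta_k\le\vartheta_k(1+\vartheta_k)\tau$. That inequality is equivalent to $\tau(1+\vartheta_k)\ge1$, so it contradicts (C4). Without it, unrolling only gives $a_{k+1}\le q^ka_1+C\sum_j q^{k-j}\|u_j-u_{j-1}\|^2$ with $q=\tau(1+\vartheta)$. Summability of $\|u_j-u_{j-1}\|^2$ alone yields no geometric rate from this.
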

\begin{proof}
	From \eqref{eq3.14}, we observe that
	\begin{equation*}
		\|u_{k+1} - u^*\|^2 \leq \|w_k - u^*\|^2 -  \gamma (2-\gamma) \frac{(1 - \sigma)^2}{(1 + \sigma)^2}\|w_k - v_k\|^2 - 2 \gamma \lambda_{\min} \beta \frac{(1-\sigma)}{(1+\sigma)^2}\|v_k - u^*\|^2.
	\end{equation*}
	that is,
	\begin{align}\label{3.12}
		\|u_{k+1} - u^*\|^2 &\leq \|w_k - u^*\|^2 -  \gamma (2-\gamma) \frac{(1 - \sigma)^2}{(1 + \sigma)^2}\|w_k - v_k\|^2 - 2 \gamma \lambda_{\min} \beta \frac{(1-\sigma)^2}{(1+\sigma)^2}\|v_k - u^*\|^2\notag\\
		&= \|w_k - u^*\|^2 -  \gamma (2-\gamma) \alpha\|w_k - v_k\|^2 - 2 \gamma \lambda_{\min} \beta \alpha\|v_k - u^*\|^2\notag\\
		&\leq \|w_k - u^*\|^2 - \frac{1}{2} \alpha\min\{\gamma(2-\gamma),\, 2\gamma\lambda_{\min} \beta\} \|w_k - u^*\|^2\notag\\
		&=\tau\|w_k - u^*\|^2.
	\end{align}
	where $\tau = 1- \frac{1}{2} \alpha\min\{\gamma(2-\gamma),\, 2\gamma\lambda_{\min} \beta\} \in (0,\,1)$. Since  $1 \leq 1+\vartheta_k$, this implies that $\vartheta_k \leq \vartheta_k(1+\vartheta_k)\tau$ for all $k\in \mathbb{N}$. Therefore, by using $\vartheta_{k-1} \leq \vartheta_k$ together with  \eqref{eq3.4} and \eqref{3.12}, we infer that
	\begin{align}\label{3.13}
		\|u_{k+1} - u^*\|^2 &\leq \tau\left((1 + \vartheta_k)\|u_k - u^*\|^2  + \vartheta_k(1 + \vartheta_k)\|u_k - u_{k-1}\|^2\right)\notag\\
		&\leq \tau(1+\vartheta_k)\left( \|u_k - u^*\|^2  + \vartheta_k\|u_k - u_{k-1}\|^2\right)\notag\\
		&\leq \tau(1+\vartheta_k)\big( \tau(1+\vartheta_{k-1})\left(\|u_{k-1} - u^*\|^2  + \vartheta_{k-1}\|u_{k-1} - u_{k-2}\|^2\right)\notag\\
		&\qquad + \vartheta_k\|u_k - u_{k-1}\|^2\big)\notag\\
		&\leq \tau(1+\vartheta_k)\big( \tau(1+\vartheta_k)\left(\|u_{k-1} - u^*\|^2  + \vartheta_k\|u_{k-1} - u_{k-2}\|^2\right)\notag\\
		&\qquad + \vartheta_k(1+\vartheta_k)\tau\|u_k - u_{k-1}\|^2\big)\notag\\
		&\leq \tau_2(1+\vartheta_k)^2\big(\|u_{k-1} - u^*\|^2  + \vartheta_k\|u_{k-1} - u_{k-2}\|^2+ \vartheta_k\|u_k - u_{k-1}\|^2\big)\notag\\
		&\leq \tau_2(1+\vartheta_k)^2\big(\|u_{k-1} - u^*\|^2  + \vartheta_k\|u_{k-1} - u_{k-2}\|^2 + \vartheta_k\|u_k - u_{k-1}\|^2\big)\notag\\
		&\,\,\,\vdots\notag\\
		&\leq \tau^{k}(1+\vartheta_k)^{k}\big(\|u_1 - u^*\|^2  + \vartheta_k\sum_{j=1}^{k}\|u_j - u_{j-1}\|^2 \big).
	\end{align}
	Since $\vartheta_k \leq \vartheta$ and from \eqref{eq3.8} there exists $\mathcal{M}>0$ such that
	\begin{align}
		\|u_{k+1} - u^*\|^2			&\leq \tau^{k}(1+\vartheta)^{k}\big(\|u_1 - u^*\|^2  + \vartheta \mathcal{M} \big)\notag\\
		&= (1+\vartheta - \tau(1+\vartheta))^k\big(\|u_1 - u^*\|^2  + \vartheta \mathcal{M} \big)\notag\\
		&= (1 - \tau(1+\vartheta)+\vartheta)^k\big(\|u_1 - u^*\|^2  + \vartheta \mathcal{M} \big).
	\end{align}
	Since  $\vartheta \leq \tau(1+\vartheta)$, this follows $1 - \tau(1+\vartheta)+\vartheta \in (0,\, 1)$. Thus, we obtain the desired result by the definition \ref{df1}. 
\end{proof}
\begin{remark}
	Theorem~\ref{th4} presents a significant improvement over the result established in \cite[Theorem 4.2]{WLC24}, where the authors assumed that $\mathcal{A} : \mathcal{H} \rightrightarrows \mathcal{H}$ is a maximally and $r$-strongly monotone operator, and $\mathcal{B} : \mathcal{H} \to \mathcal{H}$ is a monotone and $L$-Lipschitz continuous mapping. Additionally, their framework required the inertial  sequences $\{ \alpha_k \}$, $\{ \beta_k \}$, and $\{ \theta_k \}$ subject to the following parameter constraints:
	\[
	\hat{\lambda} := \min\left\{\frac{\mu}{L}, \lambda_1\right\}, \quad 
	\tau := 1 - \frac{1}{2} \min\left\{1 - \mu, 2 \hat{\lambda} r \right\} \in \left(\frac{1}{2}, 1\right),
	\]
	and
	\begin{itemize}
		\item[(a)] $0 \leq \beta_k \leq \beta < \dfrac{1}{2}\left(\dfrac{1}{\tau} - 1\right),$
		
		\item[(b)] $0 \leq \alpha_k \leq \alpha < \dfrac{1 - \tau}{\tau},$
		
		\item[(c)] 
		\begin{multline*}
			\max\left\{\dfrac{1 - \beta}{1 + \alpha - \beta}, \dfrac{\beta}{1 + \beta - \tau(1 + \alpha)}\right\} < \theta \leq \theta_{k-1} \leq \theta_k \\ \leq 
			\frac{-1 - \beta + \sqrt{(1 + \beta)^2 - 4\left(\frac{1}{\tau} - 1 - 2\beta\right)(\beta - 1)}}{2\left(\frac{1}{\tau} - 1 - 2\beta\right)}.
		\end{multline*}
	\end{itemize}
	Alternatively, our proposed analysis eliminates the need for such restrictive and intertwined conditions by introducing more relaxed, practical, and easily verifiable assumptions, which are comprehensively presented in Assumption~\ref{Assump:3}.
\end{remark}

	\section{Computational Experiment}\label{S:5}
\noindent This section presents several examples where the operator $\mathcal{B} = \nabla f$ is monotone but neither Lipschitz continuous nor co-coercive. Such examples are essential to highlight the applicability of our proposed inertial-based contraction-type method. We illustrate numerical experiments based on classical benchmark problems and evaluate the performance of the proposed Algorithm \ref{A:1} (denoted by IFB)  in comparison with several state-of-the-art methods, including \cite[TC]{TC21}, \cite[YAS]{YAS24}, \cite[ZW]{ZW18}, \cite[TRCL]{TRCL24} and \cite[WLC]{WLC24}. All the numerical experiments were conducted using MATLAB version R2021b.

We adopted the parameter settings recommended in the original papers for a fair comparison. If these settings lead to suboptimal performance, we apply minor adjustments, ensuring consistency with our method's tuning principles. The parameter configurations used for the competing algorithms are summarized below.

\begin{itemize}
	\item [{(\rm a)}] IFB: set $s= 1$, $\mu= 0.5$, $\sigma = 0.9$, $\gamma= 1.9$, $\vartheta_k = \frac{\vartheta\sqrt{k}}{k+5}$ and $\vartheta = 0.99\frac{\mathcal{E}}{\mathcal{E}+\max\{1, \mathcal{E}\}}$.
	
	\item [{(\rm b)}] TC: set $\delta= 2$, $l= 0.5$, $\mu = 0.5$, $\gamma= 1$, $\alpha_k = \frac{1}{k+1}$,
	$\beta_k = 0.5(1- \alpha_k)$, $f(x)= 0.5x$ and $\varepsilon = \frac{100}{(k+1)^2}$ and $\vartheta = 0.5$.
	
	\item [{(\rm c)}] YAS: set $\mu = 0.0001$, $\lambda_{-1} = \lambda_0 =
	0.001$ and $\vartheta= 10$.
	
	\item [{\rm d)}] ZW: set $\lambda_k = \frac{k}{1+k}$ and $c=0.5$.
	
	\item [{(\rm e)}] TRCL: set  $\delta= 5$, $l= 0.4$, $\mu = 0.4$, $\gamma= 0.6$, $\alpha_k = 1/(k+ 1)$,
	$\beta_k = \frac{1}{2}$, $\lambda_k = \frac{1}{\|C\|^2}$, $\gamma_k = 1- \alpha_k - \beta_k$ and  $f(x)= 0.5x$.
	
	\item [{(\rm f)}] WLC: set $
	\mu = 0.9,\quad \alpha_k = 1 - \frac{1}{10k}, \quad \beta_k = 0.1 - \frac{1}{1000 + k}, \quad \vartheta_k = 0.45 - \frac{1}{1000 + k},
	\lambda_1 = 0.1, \quad \mu_k = \frac{1}{k^2}, \quad p_k = \frac{1}{k^2}
	$
	
\end{itemize}

\subsection{Signal Recovery by Compressed Sensing}

In this section, we present numerical simulations based on Algorithm \ref{A:1} to demonstrate its effectiveness in signal reconstruction using compressed sensing, a framework for acquiring and reconstructing sparse signals from limited measurements. Compressed sensing addresses the recovery of signals from an underdetermined linear system, given by
\begin{equation}
	v = Cu + \varepsilon, \tag{30}
\end{equation}
where $u \in \mathbb{R}^d$ is the original signal vector with $m$ nonzero entries, $v \in \mathbb{R}^m$ is the vector of noisy observations, $\varepsilon$ represents the noise, and $C : \mathbb{R}^d \to \mathbb{R}^m$ is a known linear measurement operator with $m << d$.

Recovering the signal $u$ from the observations $v$ can be reformulated as solving a regularized optimization problem, often expressed as a variation of the LASSO problem
\begin{equation}
	\min_{u \in \mathbb{R}^d} \ \frac{1}{4} \|Cu-v\|_2^4 + \rho \|u\|_1, \tag{31}
\end{equation}
where $\rho > 0$ is a regularization parameter that promotes sparsity in the solution.

For the numerical experiments, the actual signal $u \in \mathbb{R}^d$ is generated by selecting $l$ nonzero components drawn uniformly from the interval $[-2, 2]$, while the remaining entries are set to zero. The sensing matrix $C \in \mathbb{R}^{m \times d}$ is sampled from a Gaussian distribution with zero mean and unit variance. The measurements $v$ are then corrupted with Gaussian noise adjusted to achieve a signal-to-noise ratio (SNR) of 40 dB. The initial guess $u_0$ for the iterative algorithm is selected randomly. To evaluate the performance of the recovery process, the mean squared error (MSE) at iteration $k$ is computed as
\[
E_k = \|u_k - u^*\|^2 \leq 10^{-3},
\]
where $u_k$ is the approximation at the $k$-th iteration, and $u^*$ denotes the recovered signal.

The objective function comprises two parts \( f : \mathbb{R}^d \to \mathbb{R}\) and \( g : \mathbb{R}^d \to \mathbb{R}\) defined as
\[
f(u) := \frac{1}{4} \|C u - v\|_2^4\, \mbox{ and }\, g(u) := \rho\|u\|_1,
\]
where \( \| \cdot \|_2 \) is the standard Euclidean norm, \( \| \cdot \|_1 \) denotes the \( \ell_1 \)-norm used for regularization and $C : \mathbb{R}^d \to \mathbb{R}^m$ be a bounded linear operator. The gradient of the smooth function \( f:  \mathbb{R}^d \to \mathbb{R}\) denoted as \( \mathcal{B} := \nabla f \) and the subdifferential of the nonsmooth regularization function $g$  denoted as \( \mathcal{A} := \partial g \), are given by 
$$
\mathcal{B}=\nabla f(u) = \|Cu-v\|^2C^T(Cu - v)
$$
and
\[
\mathcal{A}=\partial \|u\|_1 = \left\{ w \in \mathbb{R}^n : \, u_i = \mathrm{sgn}(w_i) \text{ if } w_i \neq 0, w_i \in [-1, 1] \text{ if } u_i = 0 \text{ for all } i = 1, 2, \dots, k \right\},
\]
where  $\text{sgn}$ is the signum function, defined as
\[
\mathrm{sgn}(u_i) =
\begin{cases}
	1, & \text{if } u_i > 0, \\
	0, & \text{if } u_i = 0, \\
	-1, & \text{if }  u_i < 0.
\end{cases}
\]
Note that the function $f$ is convex and differentiable but not Lipschitz-continuous. 	Under this formulation, the resolvent operator $J_{\lambda\mathcal{A}}:  \mathbb{R}^d \to  \mathbb{R}^d$ is given by the proximal mapping as follows 
$$
J_{\lambda \mathcal{A}}(u) =\mbox{prox}_{\lambda g}= \operatorname{sgn}(u_i) \cdot \max\{0, |u_i| - \lambda \rho \}, \quad \forall i=1, 2, \ldots, k
$$
where \( \lambda > 0 \) is a chosen step size parameter.
\begin{figure}[H]
	\centering
	\begin{subfigure}[b]{\textwidth}
		\includegraphics[width=\textwidth]{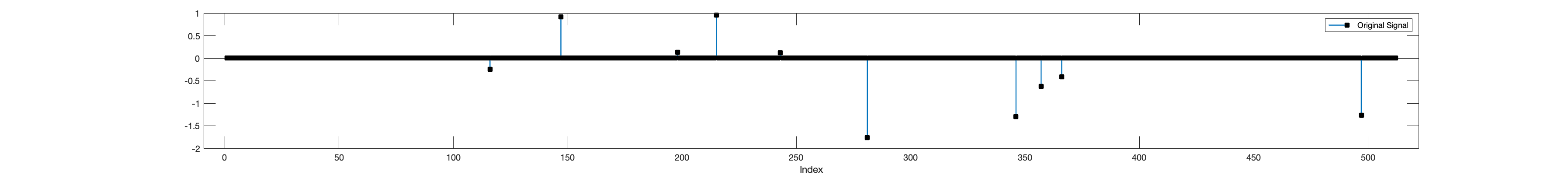}
		\caption{Original Signal ($d=512,\, m=256,\, l=10$ spikes), results shown in Table \ref{Table1}}
	\end{subfigure}
	\begin{subfigure}[b]{\textwidth}
		\includegraphics[width=\textwidth]{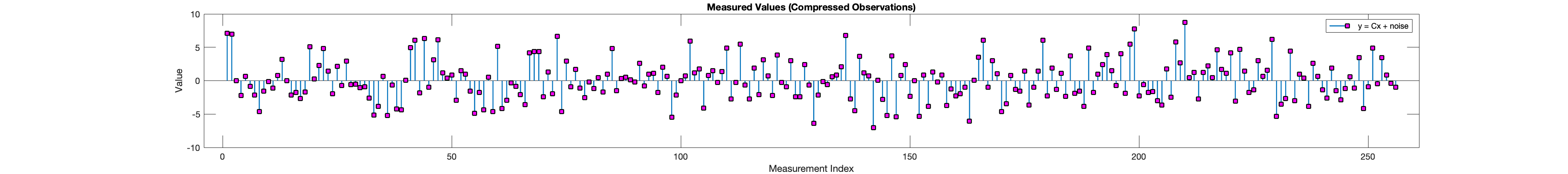}
		\caption{Measured values with SNR=40 dB}
	\end{subfigure}
	\begin{subfigure}[b]{\textwidth}
		\includegraphics[width=\textwidth]{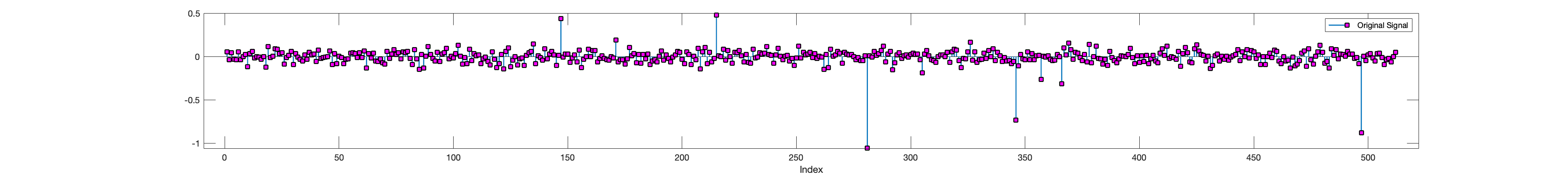}
		\caption{Recovered signal by IFB with MSE = 6.56e-03}
	\end{subfigure}
	\begin{subfigure}[b]{\textwidth}
		\includegraphics[width=\textwidth]{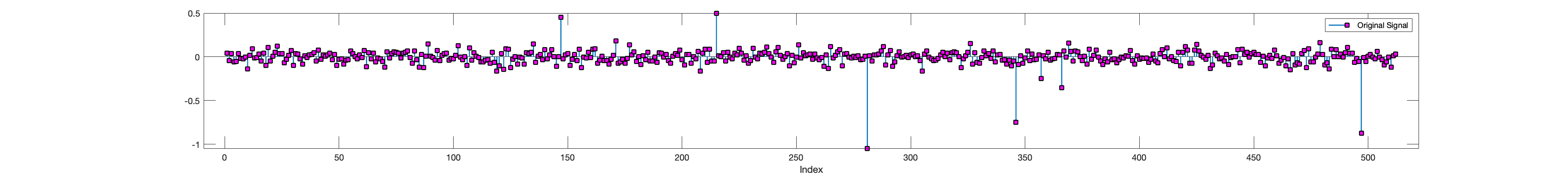}
		\caption{Recovered signal by TC with MSE  = 1.68e+01}
	\end{subfigure}
	\begin{subfigure}[b]{\textwidth}
		\includegraphics[width=\textwidth]{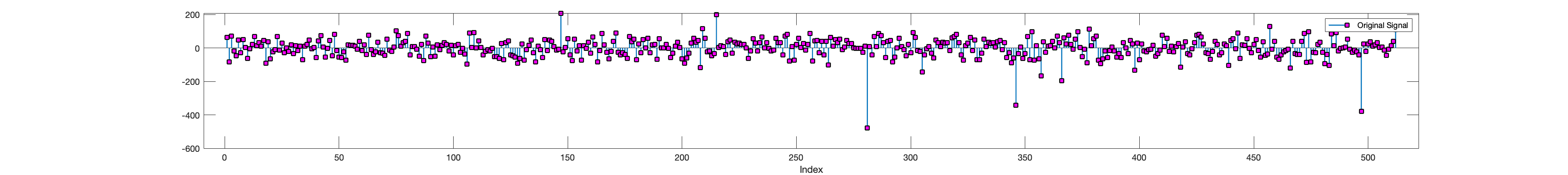}
		\caption{Recovered signal by YAS with MSE = 3.42e+03}
	\end{subfigure}
	\begin{subfigure}[b]{\textwidth}
		\includegraphics[width=\textwidth]{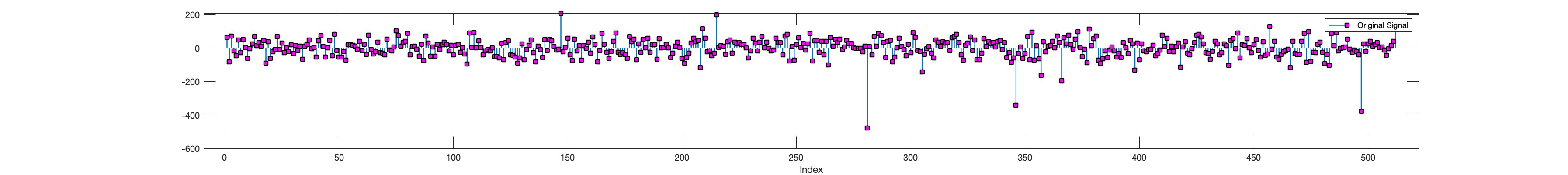}
		\caption{Recovered signal by ZW with MSE = 3.40e+03}
	\end{subfigure}
	\begin{subfigure}[b]{\textwidth}
		\includegraphics[width=\textwidth]{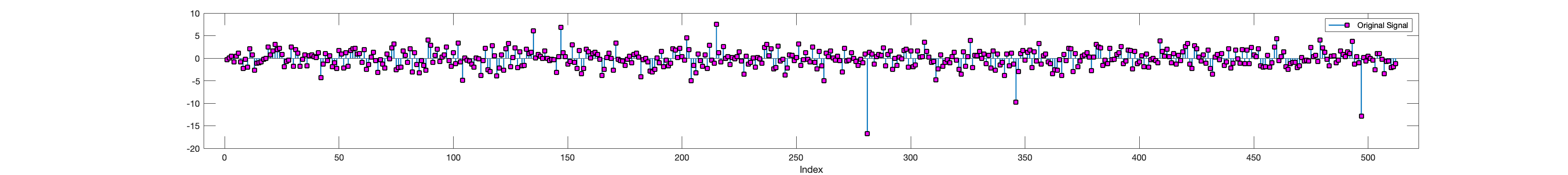}
		\caption{Recovered signal by TRCL with MSE =4.03e+01}
	\end{subfigure}
	\begin{subfigure}[b]{\textwidth}
		\includegraphics[width=\textwidth]{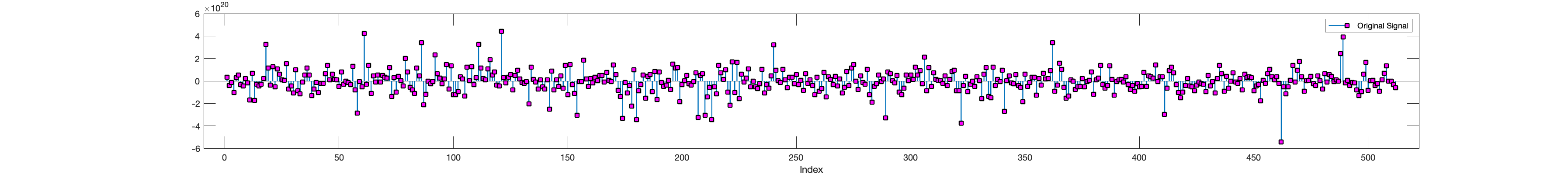}
		\caption{Recovered signal by WLC with MSE = 1.10e+40}
	\end{subfigure}
	\caption{From top to bottom: original signal, measured values, recovered signal by  IFB,
		TC, YAS, ZW, TRCL and WLC when$l=10,\, d=512,\, m=256$, respectively}
	\label{fig1}
\end{figure}

\begin{figure}[H]
	\centering
	\begin{subfigure}[b]{1\linewidth}
		\includegraphics[width=\linewidth]{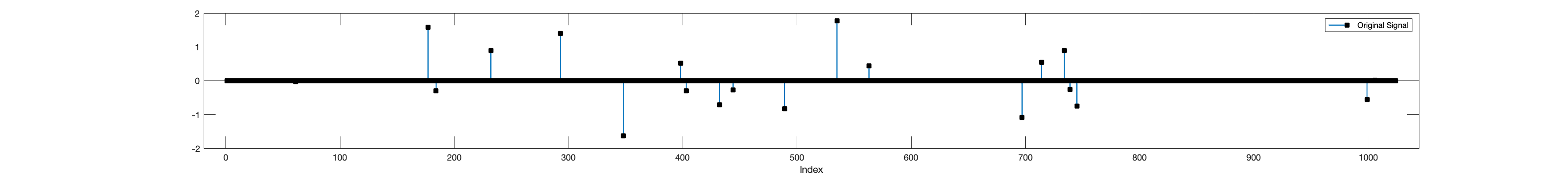}
		\caption{Original Signal ($d=1024,\, m=512,\, l=20$ spikes), results shown in Table \ref{Table1}}
	\end{subfigure}
	\begin{subfigure}[b]{\textwidth}
		\includegraphics[width=\textwidth]{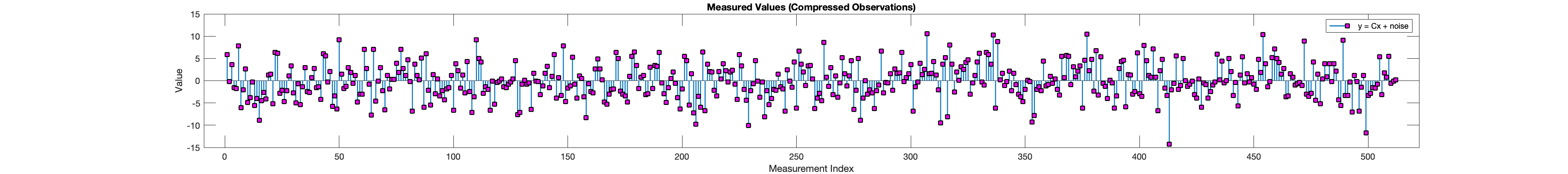}
		\caption{Measured values with SNR=40 dB}
	\end{subfigure}
	\begin{subfigure}[b]{1\linewidth}
		\includegraphics[width=\linewidth]{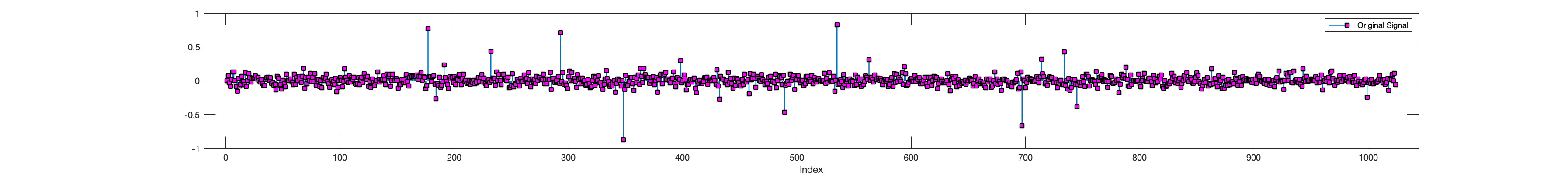}
		\caption{Recovered signal by IFB with MSE = 7.44e-03}
	\end{subfigure}
	\begin{subfigure}[b]{1\linewidth}
		\includegraphics[width=\linewidth]{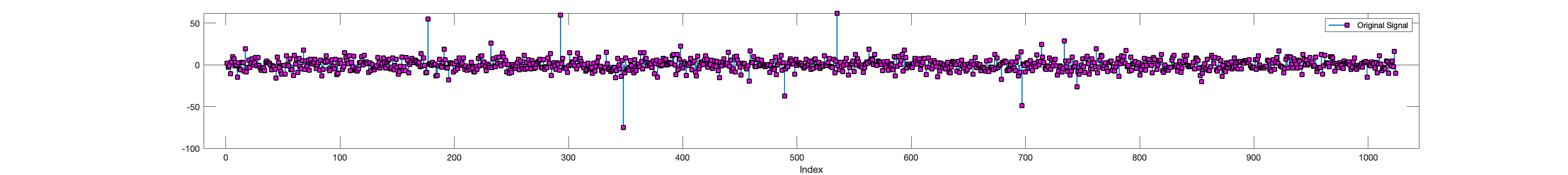}
		\caption{Recovered signal by TC with MSE = 6.00e+01}
	\end{subfigure}
	\begin{subfigure}[b]{1\linewidth}
		\includegraphics[width=\linewidth]{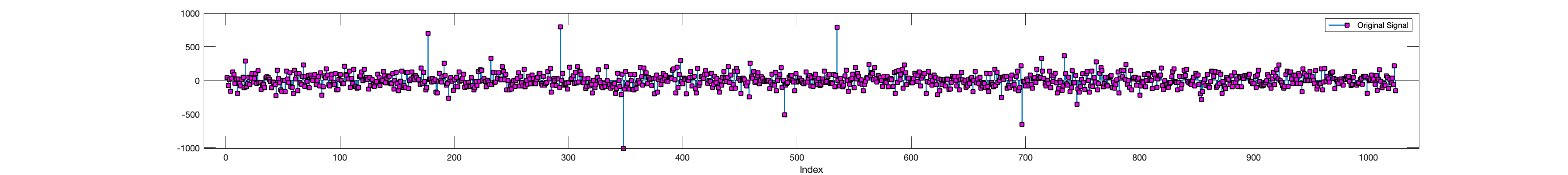}
		\caption{Recovered signal by YAS with MSE =1.22e+04}
	\end{subfigure}
	\begin{subfigure}[b]{1\linewidth}
		\includegraphics[width=\linewidth]{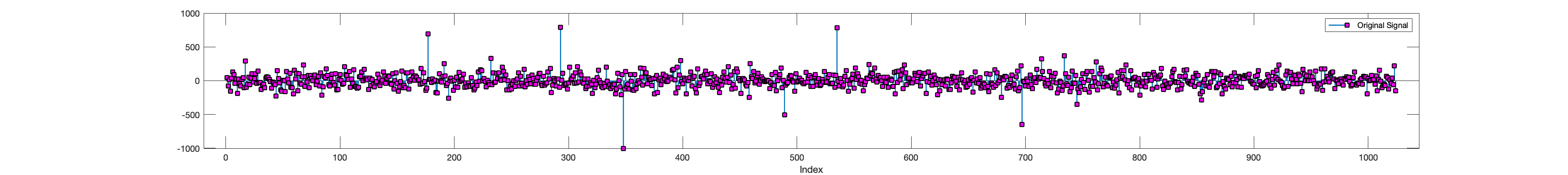}
		\caption{Recovered signal by ZW with MSE =11.20e+04}
	\end{subfigure}
	\begin{subfigure}[b]{1\linewidth}
		\includegraphics[width=\linewidth]{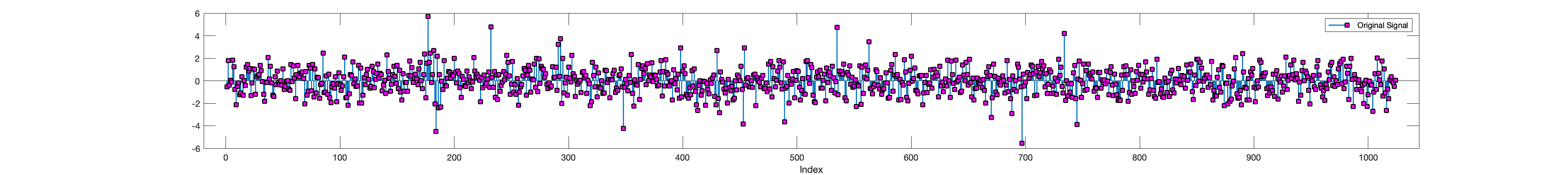}
		\caption{Recovered signal by TRCL with MSE =1.22e+01}
	\end{subfigure}
	\begin{subfigure}[b]{1\linewidth}
		\includegraphics[width=\linewidth]{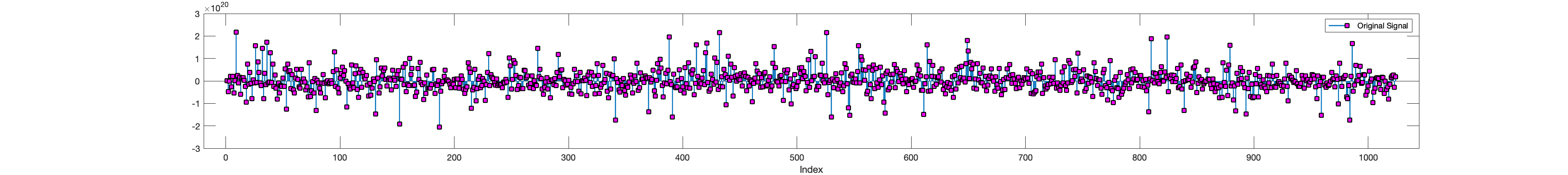}
		\caption{Recovered signal by WLC with MSE =2.55e+39}
	\end{subfigure}
	\caption{From top to bottom: original signal, measured values, recovered signal by  IFB,
		TC, YAS, ZW, TRCL and WLC when $l=20,\, d=1024,\, m=512$, respectively}
	\label{fig2}
\end{figure}
Figures \ref{fig1} and \ref{fig2} illustrate the effectiveness of our algorithm IFB in reconstructing the original signal within the compressed sensing framework. A notable strength of this approach is its ability to solve the signal recovery problem using Algorithm~\ref{A:1} without relying on the Lipschitz continuity assumption. Furthermore, the numerical experiments demonstrate that the proposed method achieves higher accuracy and greater reliability than the techniques discussed in \cite{TC21,TRCL24,YAS24,ZW18,WLC24}.
\begin{table}[!]
	\centering
	\begin{minipage}{\textwidth}
		\rule{\textwidth}{1pt}
		\begin{tabular*}{\textwidth}{@{\extracolsep{\fill}}lcccccc@{}}\\
			\multirow{1}{*}{\textbf{Sparse signals}} & \multicolumn{3}{c}{$l=10,\, d=512,\, m=256$} & \multicolumn{3}{c}{$l=20,\, d=1024,\, m=512$} \\
			\cline{2-4} \cline{5-7}\\
			& Iter. & CPU(s) & MSE & Iter. & CPU(s) & MSE \\
			\hline
			&&& &&           &       \\
			IFB & 15  & 0.000194 & 6.56e-03 & 18  & 0.000259 & 7.44e-03 \\
			&&& &&           &       \\
			TC           & 118 & 0.003420 & 1.68e+01 & 213 & 0.004150 & 6.00e+01 \\
			&&& &&           &       \\
			YAS          & 114 & 0.006541 & 3.42e+03 & 204 & 0.006649 & 1.22e+04 \\
			&&& &&           &       \\
			ZW           & 138 & 0.007621 & 3.40e+03 & 230 & 0.009774 & 1.12e+05 \\
			&&& &&           &       \\
			TRCL         & 122 & 0.006198 & 4.03e+01 & 300 & 0.005570 & 1.22e+01 \\
			&&& &&           &       \\
			WLC          & 100 & 0.048735 & 1.10e+40 & 289 & 0.072011 & 2.55e+39 \\
		\end{tabular*}
		\rule{\textwidth}{1pt}
	\end{minipage}
	\vspace{\abovecaptionskip}
	\caption{Performance comparison for various sparse signals}
	\label{Table1}
\end{table}
\begin{figure}[H]
	\centering
	\begin{subfigure}[b]{0.49\linewidth}
		\includegraphics[width=\linewidth]{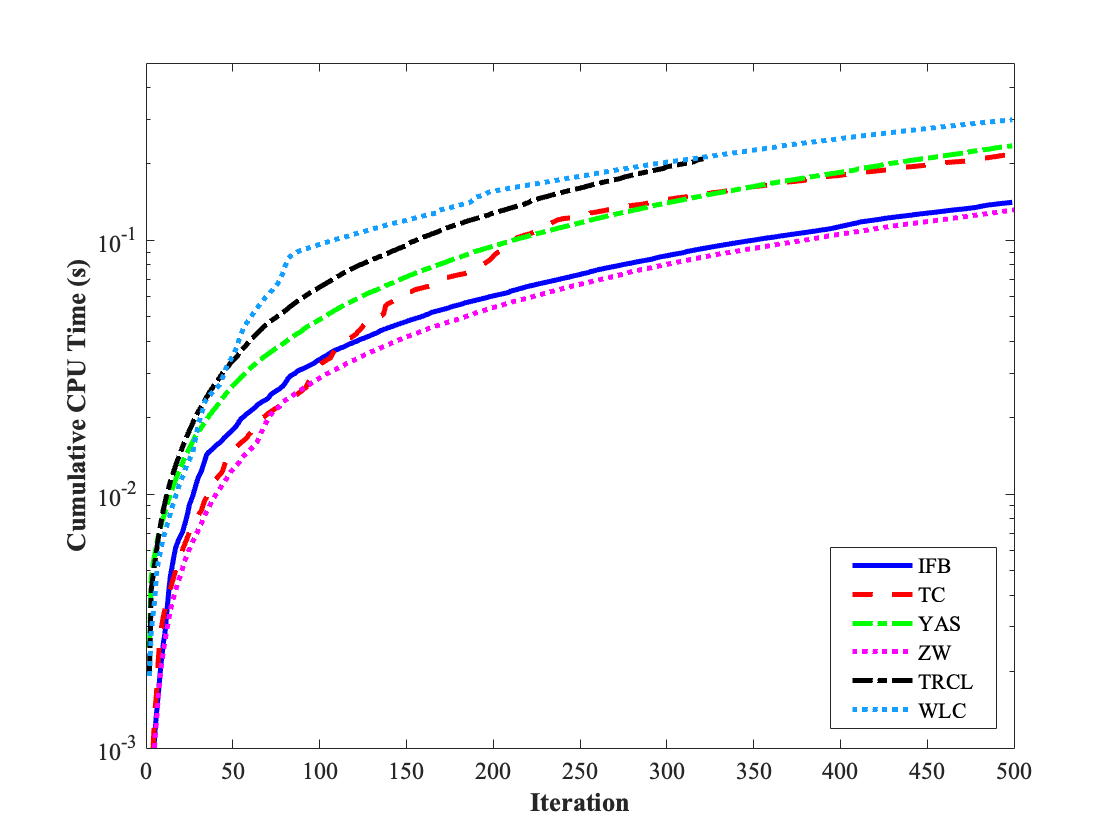}
		\caption{$d=512,\, m=256,\, l=10$ spikes}
	\end{subfigure}
	\begin{subfigure}[b]{0.49\linewidth}
		\includegraphics[width=\linewidth]{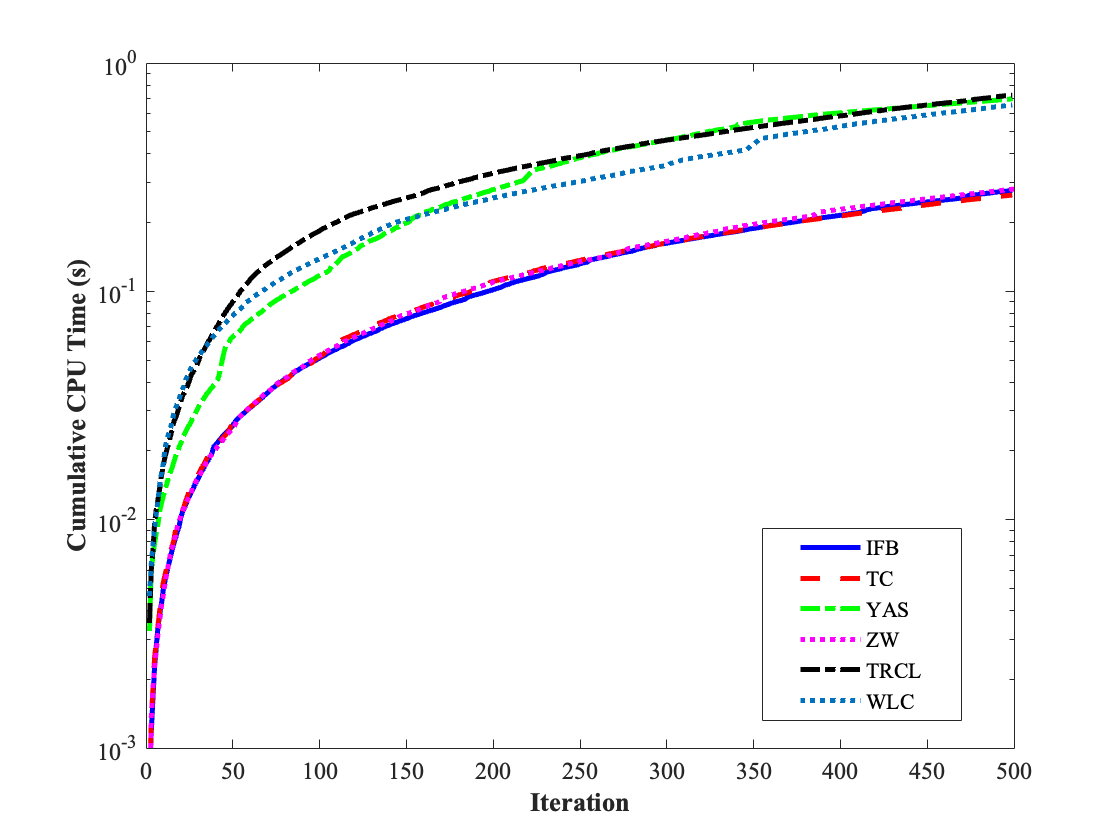}
		\caption{$d=1024,\, m=512,\, l=20$ spikes}
	\end{subfigure}
	\caption{Variation of CPU time of Figure \ref{fig1} and \ref{fig2}}
	\label{fig3}
\end{figure}		

\begin{example}\label{ex2}
	Assume a minimization problem  
	\begin{equation}\label{min2}
		\min_{u \in \mathbb{R}^d} \frac{1}{2} \|Qu - q\|_{2}^2 + \mu \sum_{i=1}^n |u_i|^\alpha+ \rho \|u\|_1.
	\end{equation}
	where, \( Q = \{p_1, p_2, \ldots, p_i\} \), where \( i = 1, 2, \ldots, m \), and each \( p_i \in \mathbb{R}^d \). The set \( q \) consists of \( m \) real values (outcomes), i.e., \( q = \{q_1, q_2, \ldots, q_i\} \) for \( i = 1, 2, \ldots, m \). The parameter \( \rho>0 \) is the sparsity controlling parameter, and \( \| \cdot \|_2 \) denotes the Euclidean norm. The nonsmooth \( \ell_1 \)-norm  \( \| \cdot \|_1 \) promotes sparsity by selecting only those attributes. The nonconvex $\sum_{i=1}^n |w_i|^\alpha$	term enhances sparsity recovery beyond convex methods like LASSO and reduces measurement requirements and improves resolution in image recovery problems, see \cite{C07,FR13}. Set the functions
	\[
	f(u) = \frac{1}{2} \|Qu - q\|^2 + \mu \sum_{i=1}^n |u_i|^\alpha, \quad \alpha \in (1, 2)
	\mbox{ and }
	g(u) = \rho \|u\|_1.
	\]
	Then
	\[
	\nabla f(u) = Q^T(Qu - q) + \mu \alpha \cdot \text{sgn}(u_i) |u_i|^{\alpha - 1}.
	\]
	Note that $\nabla f$ is not Lipschitz continuous. Indeed, consider the scalar function $\psi(u) = |u|^\alpha$. Its derivative is
	\[
	\psi'(u) = \alpha \cdot \text{sgn}(u) |u|^{\alpha - 1},
	\]
	which becomes unbounded as $u \to 0$ for $\alpha \in (1, 2)$. Hence, $\psi'$ is not Lipschitz near zero; therefore, $\nabla f$ is not Lipschitz continuous. Due to non-Lipschitz gradient $\nabla f$, classical optimization techniques may not converge. Hence, the proposed Algorithm \ref{A:1} is suitable for the minimization problem  \eqref{min2}. We use \( E_k = \|u_{k+1} - u_k\| \) to calculate the iteration's accuracy. The stopping criterion is given by
	\[
	E_k \leq 10^{-12}.
	\]
	\begin{table}[!]
		\centering
		\begin{minipage}{\textwidth}
			\rule{\textwidth}{1pt}
			\begin{tabular*}{\textwidth}{@{\extracolsep{\fill}}lcccccc@{}}\\
				\multirow{1}{*}{\textbf{Sparse signals}} & \multicolumn{3}{c}{$d=512,\, m=256$} & \multicolumn{3}{c}{$d=1024,\, m=512$} \\
				\cline{2-4} \cline{5-7}\\
				& Iter. & CPU(s) & Error & Iter. & CPU(s) & Error \\
				\hline
				&&& &&           &        \\
				IFB & 11  & 0.0035 & 5.87e-12 & 12  & 0.0040842 & 3.51e-12 \\
				&&& &&           &        \\
				TC           & 250 & 0.08512 & 5.32e-03 & 250 & 0.07641 & 5.36e-03 \\
				&&& &&           &        \\
				YAS          & 250 & 0.08612 & 2.54e-05 & 250 & 0.08865 & 3.33e-04 \\
				&&& &&           &        \\
				ZW           & 250 & 0.09651 & 2.54e-03 & 250 & 0.09153 & 3.98e-03 \\
				&&& &&           &        \\
				TRCL         & 250 & 0.16901 & 1.43e-05 & 250 & 0.15809 & 4.52e-05 \\
				&&& &&           &        \\
				WLC          & 51 & 0.18764 & 2.81e-12 & 53 & 0.17129 & 3.12e-12 \\
			\end{tabular*}
			\rule{\textwidth}{1pt}
		\end{minipage}
		\vspace{\abovecaptionskip}
		\caption{Performance comparison for Example \ref{ex2}}
		\label{Table2}
	\end{table}
	\begin{figure}[H]
		\centering
		\begin{subfigure}[b]{0.49\linewidth}
			\includegraphics[width=\linewidth]{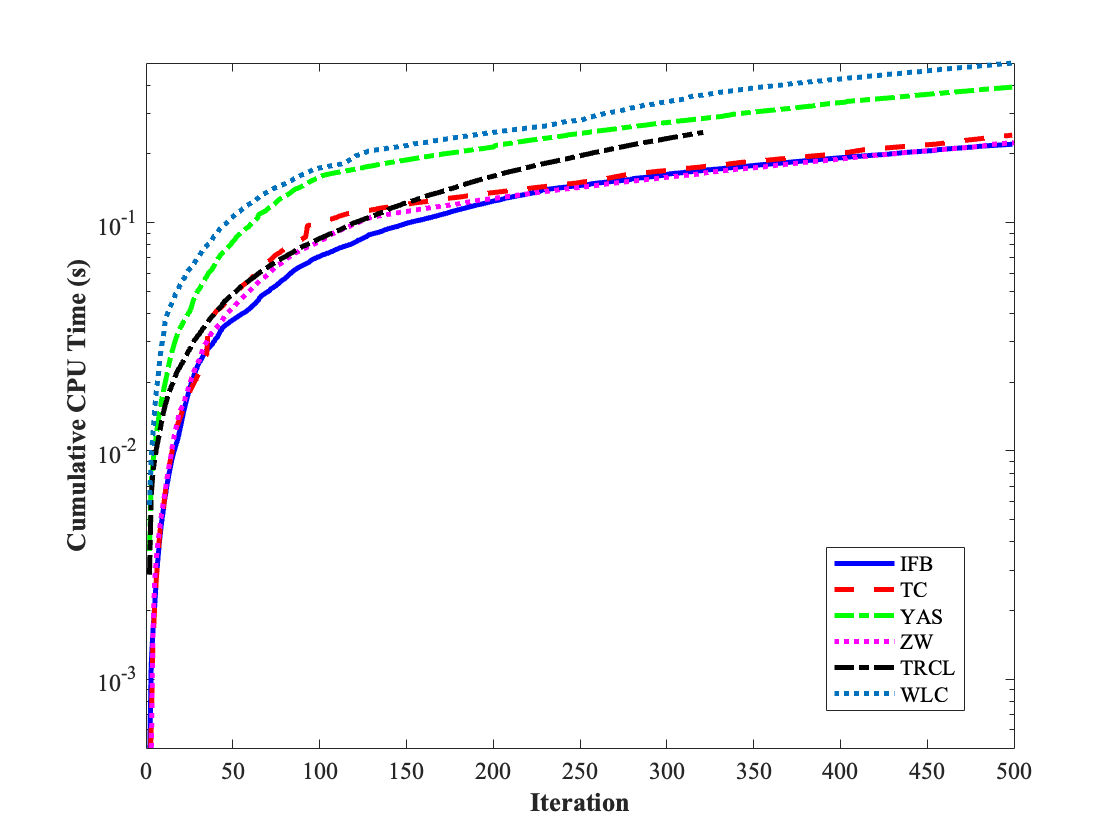}
			\caption{$d=512,\, m=256$}
		\end{subfigure}
		\begin{subfigure}[b]{0.49\linewidth}
			\includegraphics[width=\linewidth]{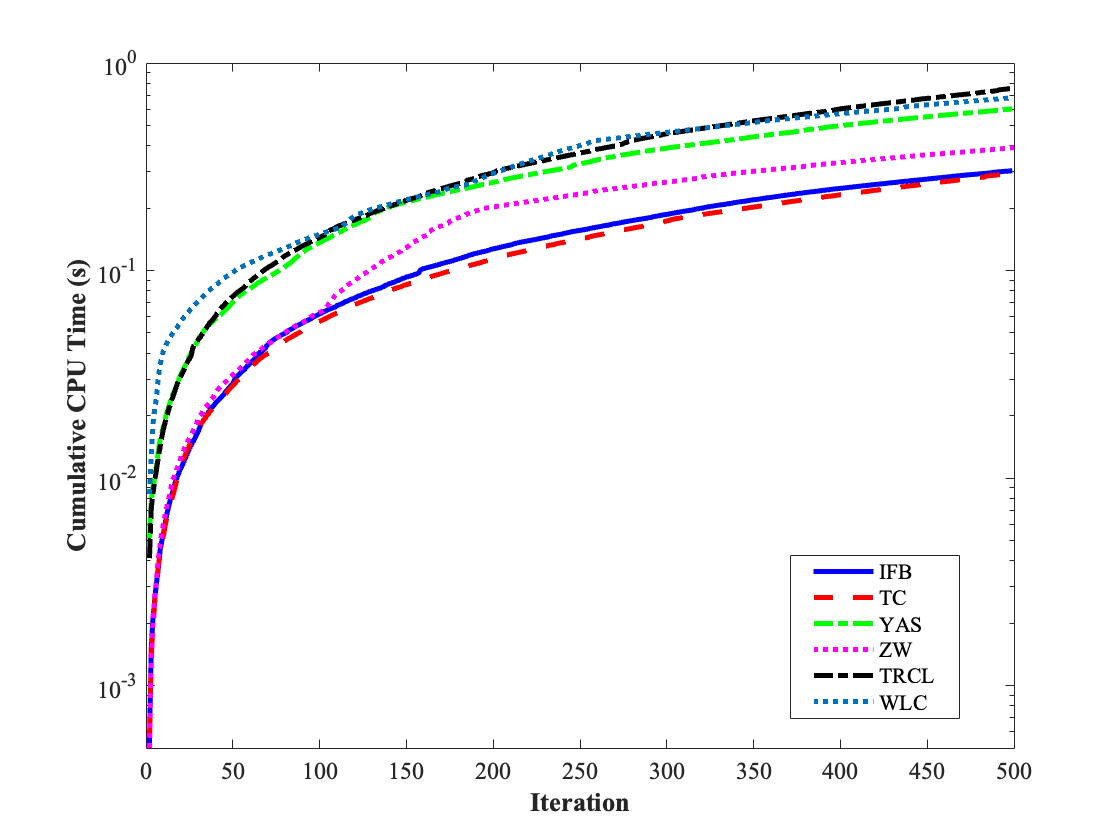}
			\caption{$d=1024,\, m=512$}
		\end{subfigure}
		\begin{subfigure}[b]{0.49\linewidth}
			\includegraphics[width=\linewidth]{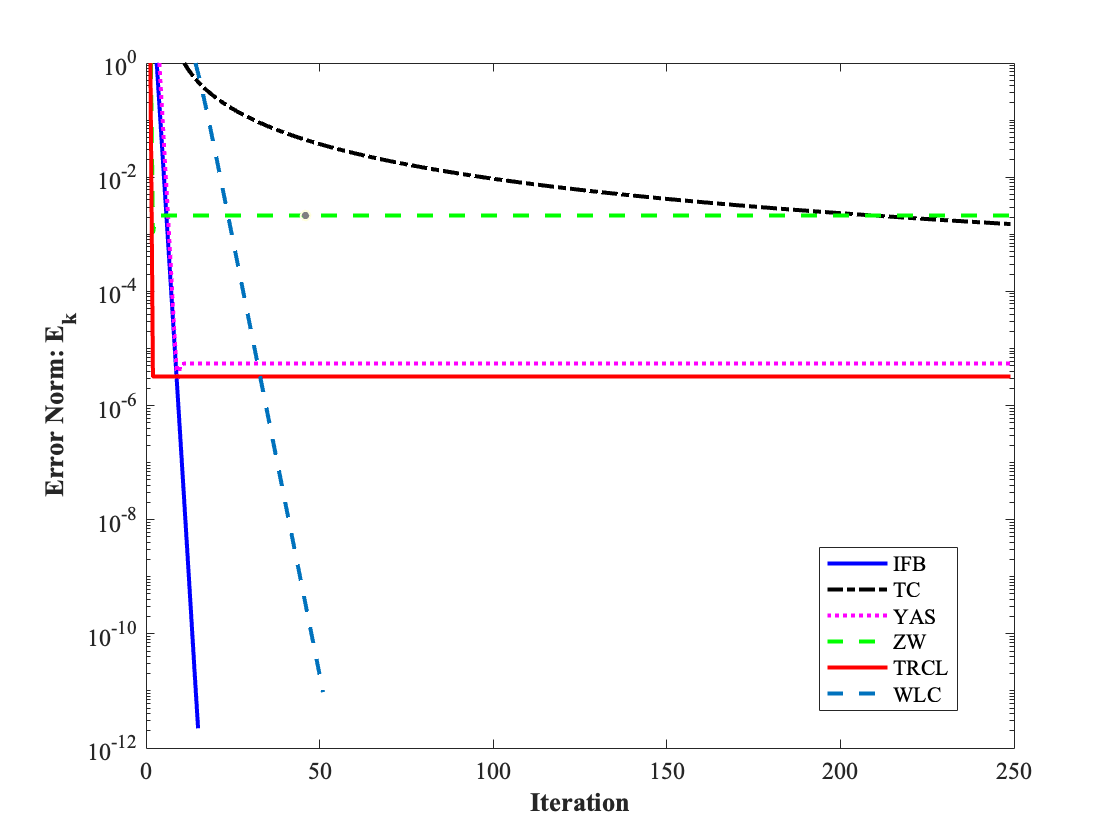}
			\caption{$d=512,\, m=256$}
		\end{subfigure}
		\begin{subfigure}[b]{0.49\linewidth}
			\includegraphics[width=\linewidth]{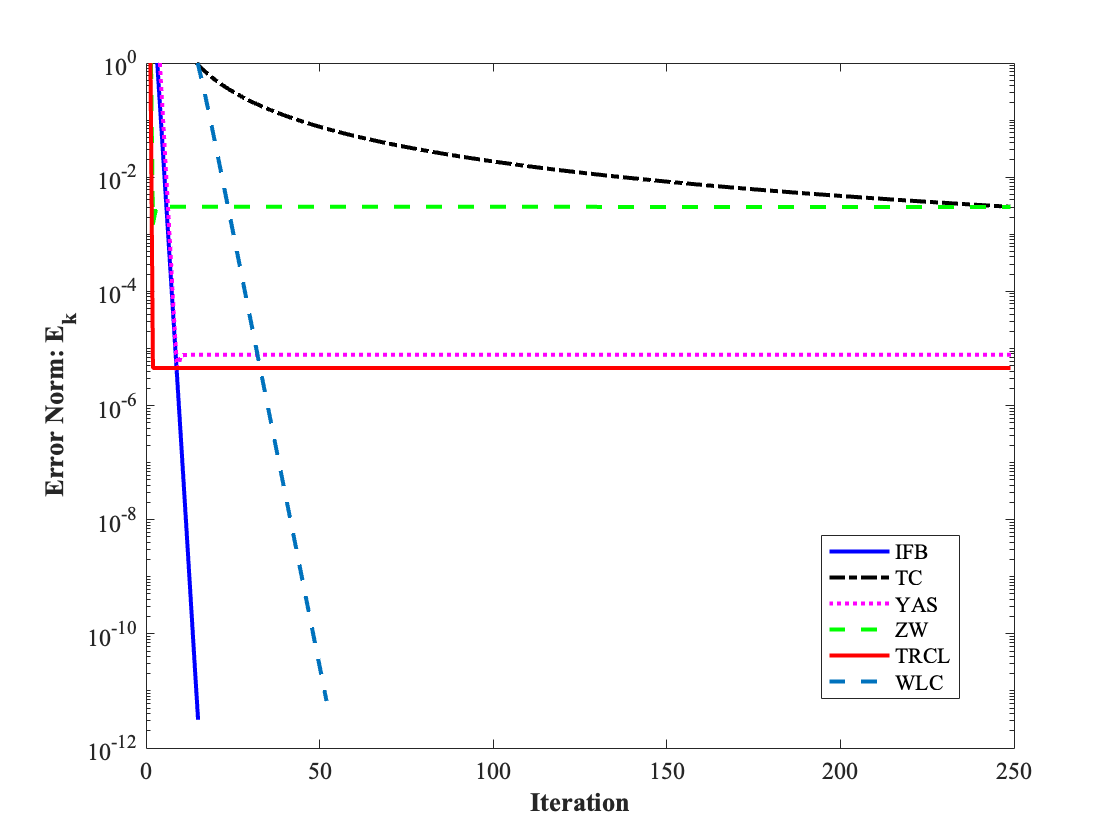}
			\caption{$d=1024,\, m=512$}
		\end{subfigure}
		\caption{Variation of CPU time and error $E_k$ of Example \ref{ex2}}
		\label{fig5}
	\end{figure}				
\end{example}

\begin{example}\label{ex3}
	Let \(\mathcal{H} := L^2[0,1] \) be a Hilbert space with the inner product and induced norm defined as
	\[
	\langle u, v \rangle := \int_0^1 u(t) v(t) \, dt  \quad \text{and} \quad \|u\| := \left( \int_0^1 u(t)^2 \, dt \right)^{1/2}, \quad \forall u, v \in\mathcal{H} \mbox{ and } \forall t \in [0, 1].
	\]
	See for more details \cite{BC11}. Define a convex, proper and lower semi-continuous function \( \phi : L^2[0,1] \to \mathbb{R} \cup \{+\infty\} \) by
	\[
	\phi(u) := \int_0^1 |u(t)| \, dt.
	\]
	Let \( \mathcal{A} := \partial \phi \) be denote the subdifferential of \( \phi \), then for every \( t \in [0,1] \), the set-valued operator \( \mathcal{A} : L^2[0,1] \rightrightarrows L^2[0,1] \) is defined as
	\[
	\mathcal{A}(u(t)) \in 
	\begin{cases}
		\{1\} & \text{if } u(t) > 0, \\
		[-1, 1] & \text{if } u(t) = 0, \\
		\{-1\} & \text{if } u(t) < 0.
	\end{cases}
	\]
	Since \( \mathcal{A} \) is the subdifferential of a convex function, it is the maximal monotone operator. 
	Define a single-valued operator \( \mathcal{B} : L^2[0,1] \to L^2[0,1] \) as
	\[
	\mathcal{B}(u(t)) := u(t) \cdot \log\left(1 + |u(t)|\right), \quad \forall  u \in L^2[0,1].
	\]
	The operator $\mathcal{B}$ is monotone, as the function \( f(u) = u \log(1 + |u|) \) is monotone increasing on \( \mathbb{R} \). However, $\mathcal{B}$ is not Lipschitz continuous due to the unbounded derivative of $f$ near \( u = 0 \).
	
	$E_k = \|u_{k+1} - u_k\| $ is used to calculate the iteration's accuracy. The following provides the stopping criterion
	\[
	E_k \leq 10^{-12}.
	\]
	We consider various initial values in the Table \ref{Table3}.
	\begin{table}[H]
		\centering
		\begin{minipage}{\textwidth}
			\rule{\textwidth}{1pt}
			\begin{tabular*}{\textwidth}{@{\extracolsep{\fill}}lcc@{}}
				Cases & $u_0$ & $u_1$ \\
				\hline
				&& \\
				1 & $\frac{\cos^2(2\pi t)}{4}$  & $\frac{3e^{-2t} \cos(3t)}{25}$   \\
				&& \\
				2           & $\frac{e^{2t} + \cos(4t)}{10}$ & $\frac{\cos^2(2\pi t)}{4}$  \\
				&&         \\
				3         & $\frac{e^{2t} + \cos(4t)}{10}$ & $\frac{3e^{-2t} \cos(3t)}{25}$   \\
				&&         \\
				4          & $\frac{\cos^2(2\pi t)}{4}$ & $\frac{e^{2t} + \cos(4t)}{10}$   \\
			\end{tabular*}
			\rule{\textwidth}{1pt}
		\end{minipage}
		\vspace{\abovecaptionskip}
		\caption{Initial values of Example \ref{ex3}}
		\label{Table3}
	\end{table}

	\begin{center}
		\begin{sideways}
			\begin{minipage}{\textheight}  
				\centering
				\begin{tabular}{@{\extracolsep{\fill}}ccccccccccccc@{}}
					\toprule
					\multirow{2}{*}{\textbf{Initial values}} & \multicolumn{3}{c}{\textbf{Case 1}} & \multicolumn{3}{c}{\textbf{Case 2}} & \multicolumn{3}{c}{\textbf{Case 3}} & \multicolumn{3}{c}{\textbf{Case 4}} \\
					\cline{2-3}\cline{4-6}\cline{5-7}\cline{8-10}\cline{11-13}
					& Iter. & CPU(s) & Error & Iter. & CPU(s) & Error & Iter. & CPU(s) & Error & Iter. & CPU(s) & Error \\
					\midrule
					IFB    & 15  & 0.03187 &   1.42e-12       & 16  &  0.04087 &1.81e-12  & 14 & 0.03939 &    1.58e-12   &    16   &   0.03679    &   1.68e-12    \\
					&&&&&&&&&&&&\\
					TC     & 250  & 0.17690 &   2.47e-05       & 251  &  0.14239 &1.32e-05  & 250 & 0.18234 &    2.34e-05   &    251   &   0.15979    &   2.34e-05    \\
					&&&&&&&&&&&&\\
					YAS    & 250  & 0.07390 &   3.12e-08       & 249  &  0.15690 &1.43e-08  & 252 & 0.08756 &    3.98e-08   &    254   &   0.05632    &   1.54e-08    \\
					&&&&&&&&&&&&\\
					ZW     & 250  & 0.14312 &   2.09e-05       & 248  &  0.27623 &2.16e-05  & 251 & 0.14434 &    1.41e-05   &    251  &   0.17853   &   1.22e-05    \\
					&&&&&&&&&&&&\\
					TRCL   & 250  & 0.12452 &   1.76e-08       & 250  &  0.15690 &3.28e-08  & 253 & 0.12905 &    2.90e-08   &    249   &   0.54324    &   3.64e-08    \\
					&&&&&&&&&&&&\\
					WLC   & 45  & 0.05409 &   1.23e-12       & 45  &  0.02309 &2.40e-12  & 44 & 0.01234 &    2.65e-12   &    46   &   0.044783    &   2.99e-12    \\
					&&&&&&&&&&&&\\
					\bottomrule
				\end{tabular}
				\captionof{table}{Performance comparison for various initial values of Example~\ref{ex3}}
				\label{Table4}
			\end{minipage}
		\end{sideways}
	\end{center}

	\begin{figure}[H]
		\centering
		\begin{subfigure}[b]{0.49\linewidth}
			\includegraphics[width=\linewidth]{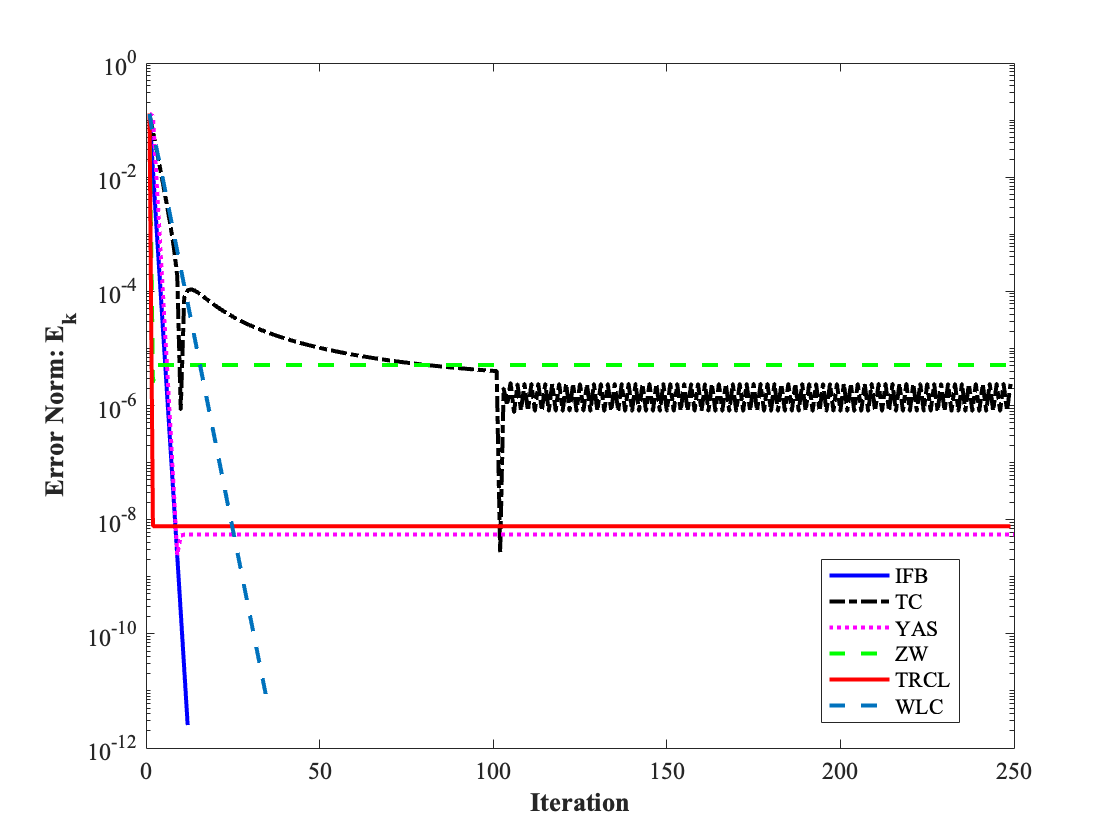}
			\caption{Cases 1}
		\end{subfigure}
		\begin{subfigure}[b]{0.49\linewidth}
			\includegraphics[width=\linewidth]{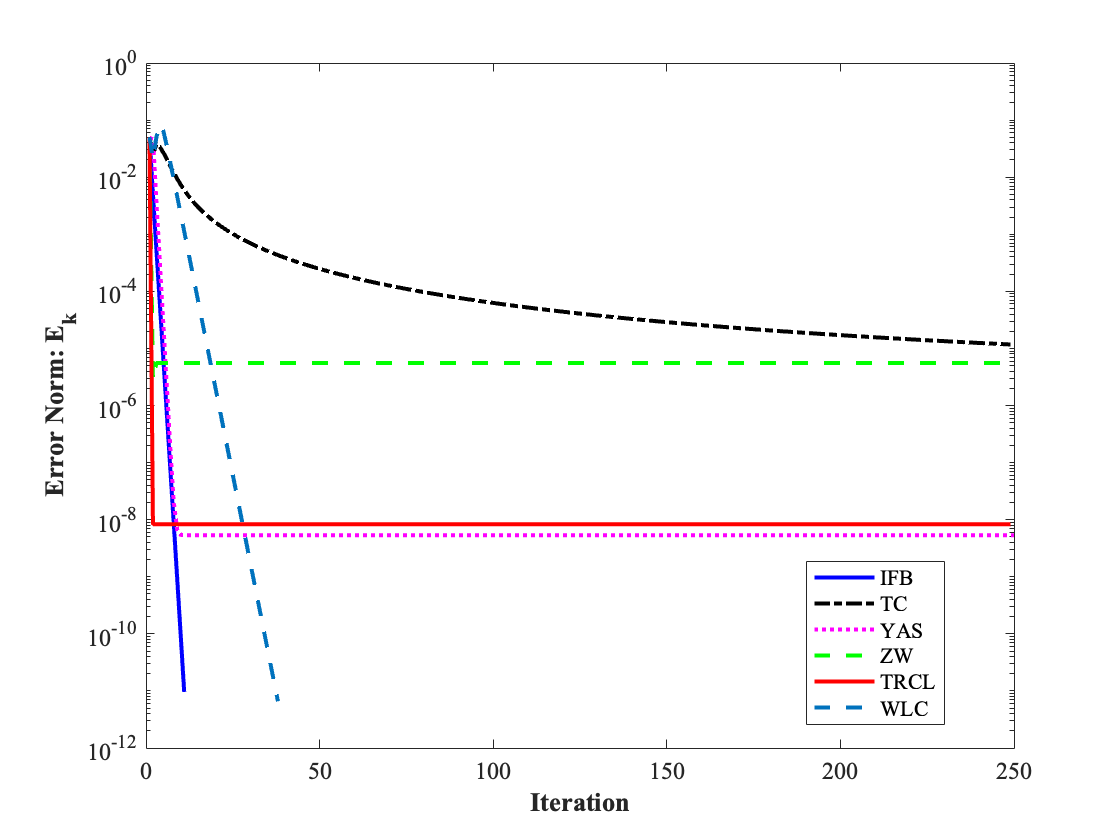}
			\caption{Cases 2}
		\end{subfigure}
		\begin{subfigure}[b]{0.49\linewidth}
			\includegraphics[width=\linewidth]{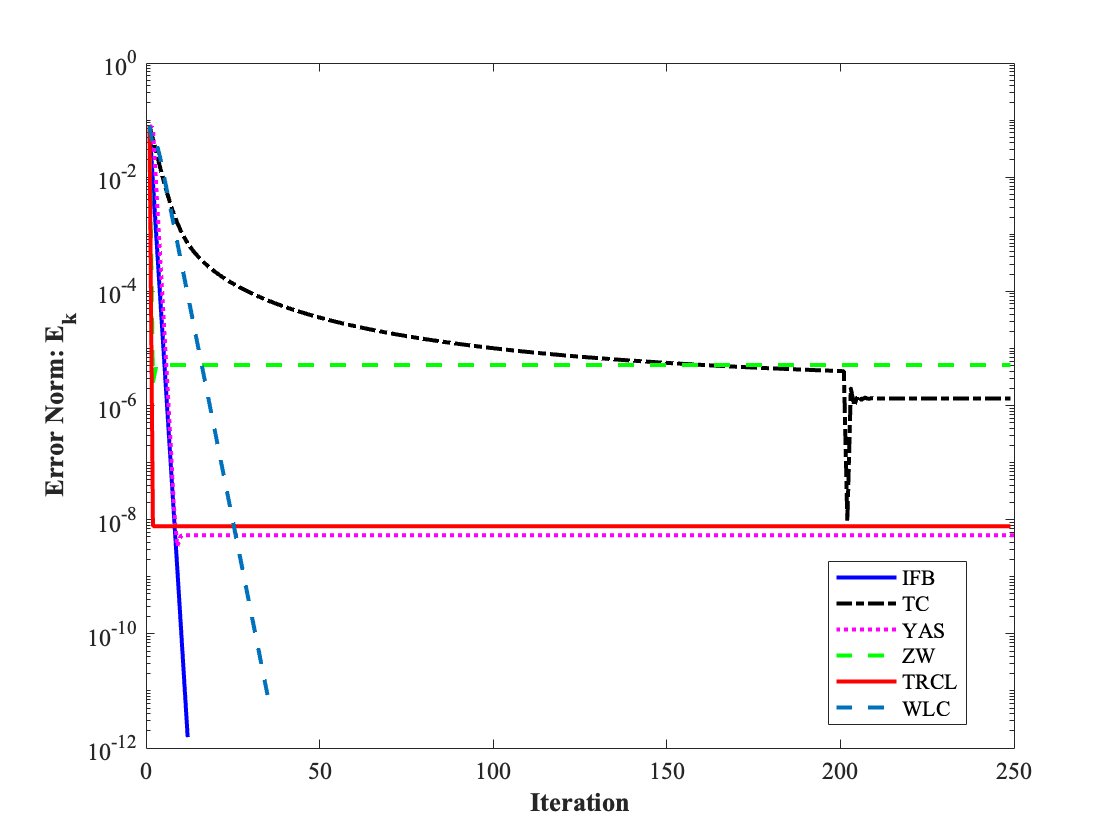}
			\caption{Cases 3}
		\end{subfigure}
		\begin{subfigure}[b]{0.49\linewidth}
			\includegraphics[width=\linewidth]{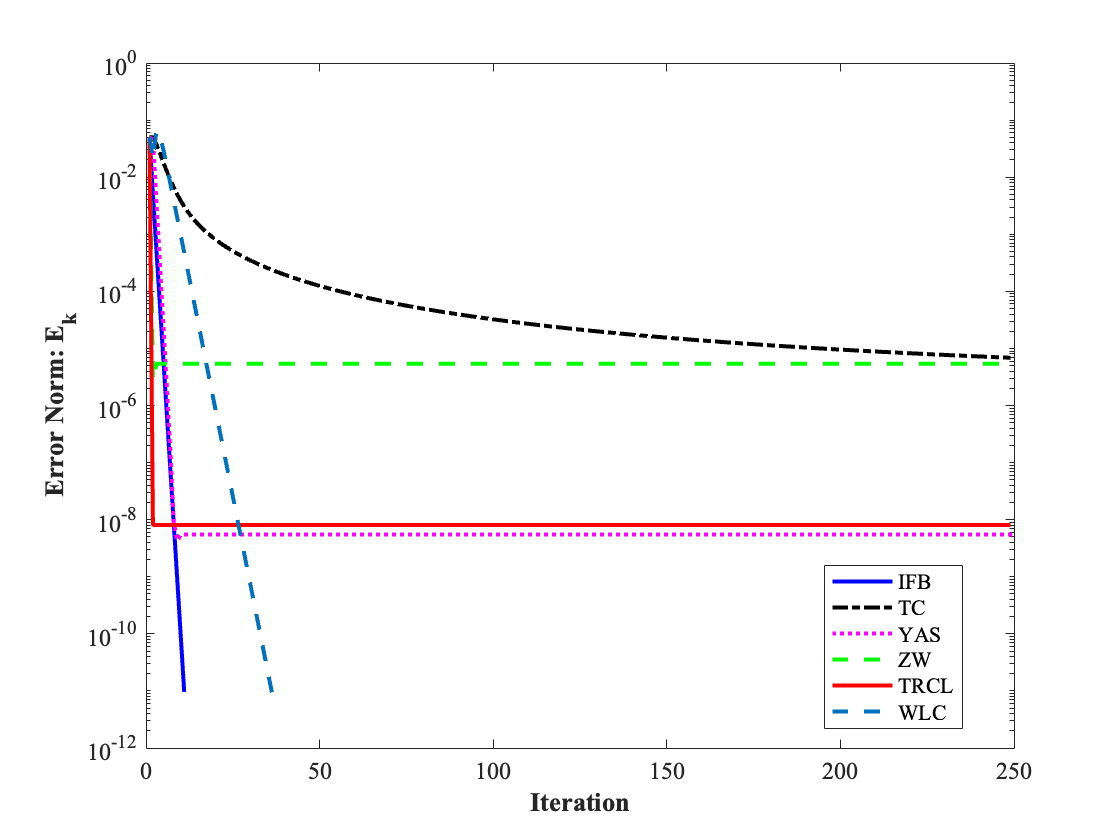}
			\caption{Cases 4}
		\end{subfigure}
		\caption{Convergence of $E_k$ of Example \ref{ex3}}
		\label{fig6}
	\end{figure}		
\end{example}

\section{Conclusion}
\noindent In this study, we developed an inertial contraction-type method for solving monotone variational inclusion problems (MVIP) in real Hilbert spaces without requiring the usual assumptions of coercivity or Lipschitz continuity on the single-valued operator. This relaxation of assumptions significantly broadens the applicability of the method. We established weak convergence with a rate of $\mathcal{O}(1/\sqrt{k})$, and under stronger assumptions, namely, maximal and strong monotonicity of the set-valued operator, we achieved strong convergence with a linear rate. Our numerical experiments on signal recovery problems validate the theoretical findings and highlight the algorithm's robustness. Notably, the performance improves as the relaxed parameter sequence $\{\vartheta_k\}$ increases, demonstrating the practical advantages of our approach in real-world scenarios where standard assumptions may not hold. These results emphasize the method's suitability for various optimization problems in modern applications. This study advances the current work by unifying and extending recent developments in iterative algorithms for optimization and inclusion problems. We aim to explore stochastic versions of our proposed method and further investigate potential acceleration strategies to enhance convergence rates.

\section{Conflict of interest}
All authors declare that they have no conflicts of interest.

\end{document}